\numberwithin{equation}{section}
\newtheorem{theorem}{Theorem}[section]
\newtheorem{definition}[theorem]{Definition}
\newtheorem{lemma}[theorem]{Lemma}
\newtheorem{remark}[theorem]{Remark}
\newtheorem{corollary}[theorem]{Corollary}
\newcommand{\C}[1][]{\ensuremath{{\mathbb{C}^{#1}} }}
\newcommand{\R}[1][]{\ensuremath{{\mathbb{R}^{#1}} }}
\newcommand{\re}{\mathbb{R}}
\newcommand{\trinorm}{|\!|\!|}
\title[On the two-power nonlinear Schr\"odinger equation]{On the two-power nonlinear Schr\"odinger equation with non-local terms in Sobolev-Lorentz spaces}
\author[V. Barros, L.C.F. Ferreira, A. Pastor]{Vanessa Barros, Lucas C. F. Ferreira and Ademir Pastor}
\address{Universidade Federal da Bahia, Instituto de matem\'atica, Av. Adhemar de Barros , Ondina, 40170-110, Salvador, Bahia,
 Brazil}
\email{vbarrosoliveira@gmail.com}
\address{IMECC-UNICAMP, Rua S\'egio Buarque de Holanda, 651, Cidade UniversitÃ¡ria, 13083-859, Campinas, SÃ£o Paulo, Brazil}
\email{lcff@ime.unicamp.br}
\address{IMECC-UNICAMP, Rua S\'ergio Buarque de Holanda, 651, Cidade UniversitÃ¡ria, 13083-859, Campinas, SÃ£o Paulo, Brazil}
\email{apastor@ime.unicamp.br}
\keywords{Nonlinear Schr\"odinger equation; Double-power nonlinearity; Non-local operators; Well-posedness; Scattering; Infinite energy solutions, Asymptotic self-similarity}
\subjclass[2010]{35Q55; 35Q60; 35A01; 35A02; 35B40; 35B06; 35A23; 35B30; 78A45}
\begin{document}

\maketitle

\begin{abstract}
We are concerned with the two-power nonlinear Schr\"odinger-type equations with non-local terms. We consider the framework of Sobolev-Lorentz spaces which contain singular functions with infinite-energy. Our results include global existence, scattering and decay properties in this singular setting with fractional regularity index. Solutions can be physically realized because they have finite local $L^2$-mass. Moreover, we analyze the asymptotic stability of solutions and, although the equation has no scaling, show the existence of a class of solutions asymptotically self-similar w.r.t. the scaling of the single-power NLS-equation. Our results extend and complement those of [F. Weissler, ADE 2001], particularly because we are working in the larger setting of Sobolev-weak-$L^p$ spaces and considering non-local terms. The two nonlinearities of power-type and the generality of the non-local terms allow us to cover in a unified way a large number of dispersive equations and systems.
\end{abstract}

\section{Introduction}
We consider the initial value problem (IVP) associated with the two-power nonlinear
Schr\"odinger equation with non-local term
\begin{equation}\label{NLS}
\left\{\begin{array}{l} i \partial_t u + Lu  = a |u|^{\alpha}
 u +bE(|u|^\gamma) u,  \\
u(x,0)  = u_0(x)
\end{array}
\right.\quad  (x, t) \in \re^n\times \re,\ n\geq1,
\end{equation}
where $u=u(x,t)$ is a complex-valued function, $a$ and $b$ are complex
constants, $0<\alpha<\gamma$ are  positive real
numbers, $E$ is a non-local linear operator, and $L$ is a linear operator defined through its Fourier transform as
$$
\widehat{Lu}(\xi)=q(\xi)\widehat{u}, \quad \xi\in \R^n.
$$
Throughout the paper we assume the following:\\

{\bf (H1)} the function $q$ is real and homogeneous of degree $d$, that is,
$$
q(\lambda\xi)=\lambda^dq(\xi), \qquad \lambda>0.
$$

{\bf (H2)} The function $G(x)=\int_{\re^n} e^{i(x\xi+q(\xi))}d\xi$ belongs to
$L^\infty(\re^n)$.\\

{\bf (H3)} The operator $E$ is bounded in $L^{(p,\infty)}(\mathbb{R}^{n})$, for $1<p<\infty$, and commutes with fractional derivatives.\\

In \textbf{(H3)}, $L^{(p,\infty )}(\mathbb{R}^{n})$ stands for the so-called
weak-$L^{p}$ spaces which, in view of Chebyshev's inequality, can be
regarded as natural extensions of $L^{p}$-spaces. As will be seen later,
conditions \textbf{(H1)}-\textbf{(H2)} are sufficient to prove
dispersive-type estimates, which in turn are used to deal with the linear
part of the associated integral equation. Also, \textbf{(H3)} is sufficient
to handle with the nonlinear terms involving the operator $E$ in our functional setting.

When $L$ stands for the Laplacian operator and $b=0$,  the equation in \eqref{NLS} reduces to the well-known single-power nonlinear Schr\"odinger equation
\begin{equation}\label{singlenls}
i \partial_t u + \Delta u  = a |u|^{\alpha}u,
\end{equation}
which appears in many physical situations. So, at a first glance, \eqref{NLS} can be seen as a mathematical extension of \eqref{singlenls}. However,  several physical relevant models can be written in the form \eqref{NLS}. Indeed, let us recall some examples. When $n=2$, $L=m_1\partial_{x_1}^2+\partial_{x_2}^2$, $m_1\in\mathbb{R}\setminus\{0\}$, and $E$ is defined in Fourier variables as
\begin{equation}\label{E1}
\widehat{E(f)}(\xi)=\frac{\xi_1^2}{\xi_1^2+m_2\xi_2^2}\widehat{f}(\xi), \quad \xi=(\xi_1,\xi_2), \quad m_2>0,
\end{equation}
\eqref{NLS} reduces to the so-called Davey-Stewartson (DS) system, which was derived in \cite{DS} (see also \cite{GS}) to  model the evolution of weakly nonlinear water waves that travel predominantly in one direction, but in which the wave amplitude is modulated slowly in two horizontal directions. A generalized DS system describing the interaction of
longwaves and  shortwaves propagating in an infinite elastic medium (see \cite{BaE}) can also be written in the form \eqref{NLS}, where again $L=m_1\partial_{x_1}^2+\partial_{x_2}^2$, $m_1\in\mathbb{R}\setminus\{0\}$, and $E$ is defined by
\begin{equation}\label{E2}
\widehat{E(f)}(\xi)=\frac{\lambda\xi_1^4+(1+m_2-2\ell)\xi_1^2\xi_2^2+m_3\xi_2^4}{(\xi_1^2+m_2\xi_2^2)(\lambda\xi_1^2+m_3\xi_2^4)}\widehat{f}(\xi),
\end{equation}
with the real constants $m_2,m_3,\lambda$, and $\ell$ satisfying the relation
$$
(\lambda-1)(m_3-m_2)=\ell^2.
$$
An example in dimension $n=3$ is given by the Shrira system (see \cite{Sh}), which models the evolution of a three-dimensional packet of weakly
nonlinear internal gravity waves propagating obliquely at an arbitrary angle to the
vertical. In this case,
$$
L=\frac{\omega_{kk}}{2}\partial_{x_1}^2+\frac{\omega_{ll}}{2}\partial_{x_2}^2+\frac{\omega_{nn}}{2}\partial_{x_3}^2+\omega_{nk}\partial_{x_1x_3}^2
$$
with $\omega_{kk}\neq0$, $\omega_{nn}\neq0$,  $\omega_{ll}(\omega_{kk}\omega_{nn}-\omega_{nk}^2)\neq0$, and $E$ is given by
\begin{equation}\label{E3}
\widehat{E(f)}(\xi)=\nu\frac{\xi_2^2}{\xi_1^2+\xi_2^2}\widehat{f}(\xi), \quad \xi=(\xi_1,\xi_2,\xi_3), \quad \nu\in\mathbb{R}.
\end{equation}
Another three-dimensional example appears in the description of Bose-Einstein condensation of dipolar quantum gases (see \cite{LMSLP}, \cite{AS}). In such a case, $L$ is the Laplacian operator and $E$ is given by
\begin{equation}\label{E4}
\widehat{E(f)}(\xi)=\frac{4\pi}{3}\frac{2\xi_3^2-\xi_1^2-\xi_2^2}{|\xi|^2}\widehat{f}(\xi), \quad \xi=(\xi_1,\xi_2,\xi_3).
\end{equation}

For additional information concerning the above models and generalizations, we refer the reader to \cite{BEE}, \cite{BaE}, \cite{Ci}, \cite{DS}, \cite{G}, \cite{GS}, \cite{GS1}, \cite{GSh}, \cite{H1}, \cite{H2}, \cite{HH1}, \cite{HH2}, \cite{HS},  \cite{Sh}, \cite{W}). Note, in particular, that the symbols of the operators $E$ in \eqref{E1}, \eqref{E2}, and \eqref{E4} are homogeneous of degree zero. Hence, one can apply the Calderon-Zygmund theory (see, for instance, \cite{Gr}) to see that $E$ is bounded from $L^q$ to $L^q$, for any $1<q<\infty$; so, after applying real interpolation we see that {\bf (H3)} holds. For $E$ defined as in \eqref{E3} we cannot directly apply the Calderon-Zygmund theory. However, by interpolation with \textit{BMO} and Hardy spaces  we can still prove that $E$ is bounded from $L^q$ to $L^q$, for any $1<q<\infty$ (see, \cite[page 184]{GS1}), implying that {\bf (H3)} also holds in this case. In addition, recalling that
$$
\int_{\mathbb{R}}e^{i(a\eta^2+y\eta)}\,d\eta=\sqrt{\frac{\pi}{|a|}}e^{-i\frac{y^2}{4a}}e^{i\frac{\pi}{4}sgn(a)}, \qquad a,y\in\mathbb{R},
$$
it is easily seen that {\bf (H2)} also holds in the above examples.

Notice also that we are able to consider higher-order operators $L$, as we take the parameter $d\geq2$, such as polyharmonic operators. In particular, our results also apply to the fourth-order nonlinear Schrödinger equation
$$
i\partial_tu+\mu \Delta^2 u=a|u|^\alpha u,\qquad  \mu\neq0,
$$
which was introduced in \cite{Ka}, \cite{KS} to take into account the role of fourth-order dispersion in the
propagation of intense laser beams in a bulk medium with Kerr nonlinearity. To see that {\bf (H2)} also holds here, we refer the reader to \cite{BKS}.

Equation \eqref{singlenls} is one of most studied dispersive equation. To avoid too many references we cite, for instance, the books \cite{Ca}, \cite{Fi}, \cite{LP}, where the reader will find a large class of results in different function spaces. We only emphasize that results on self-similar solutions, among others, were obtained, for instance, in \cite{CW1}, \cite{CW2}, \cite{CW3}, \cite{RY1}, \cite{SFR}. Especially when $L$ is the Laplacian and $E$ is the identity operator $I$, \eqref{NLS} has been studied in recent years, to cite a few works, see \cite{CMZ}, \cite{TVZ}, \cite{MXZ-1}, \cite{MXZ-2} and their references. The most disseminated  results are obtained in the $H^1$-setting, which provides finite energy solutions. The studied topics cover local and global well-posedness, scattering, radial symmetry and asymptotic behavior of solutions. However,  in \cite{W}  solutions of \eqref{NLS} with $E=I$ were obtained in the spaces $H^{s,q}(\R^n)=(I-\Delta)^{-s/2}L^q(\R^n)$, with $s>0$ and $1<q<\infty$, provided that the initial data satisfies some additional conditions.

For a general operator satisfying the assumption {\bf (H3)},  existence and asymptotic properties in the single-power case $\gamma=\alpha$  were analyzed, in the context of weak Lebesgue spaces, in \cite{BP}, \cite{B} (see also \cite{VP}).

A substantial mathematical difference between \eqref{NLS} and \eqref{singlenls} is the failure of scaling in  \eqref{NLS}; it is easily seen that the map
\begin{equation}\label{scal-1}
u(x,t)\mapsto u_{\lambda}(x,t):=\lambda^{2/\alpha}u\left(\lambda x, \lambda^2 t \right),\quad \lambda>0
\end{equation}
leaves \eqref{singlenls} invariant (i.e., if $u$ is solution so is $u_{\lambda}$) but the same is not true for \eqref{NLS} (with $L=\Delta$ and $d=2$).

Our main aim here is twofold: to provide a larger class for global existence and uniqueness by considering the framework of Sobolev-weak-$L^p$ spaces with fractional regularity index $0<s<1$ (see Section \ref{preliminaries} for the definitions), which contain singular functions with infinite-energy and allow to analyze self-similar asymptotics; and to consider non-local operators in the nonlinearities which allow to address in a unified way a number of  dispersive models, including the above examples.

The aspects above, in turn, bring several additional difficulties. Thus we need to extend some well known results in the context of Lebesgue spaces to the spaces we are interested in, as well as to take into account the influence of the nonlocal operators in the estimates. We believe that those extended results have prospects to be applied in many other situations.

Also, we obtain scattering, decay properties of solutions and asymptotic stability results in that singular setting. In Remark \ref{rem2}, we point out that the data-solution map is Lipschitz continuous and then we have in fact a well-posedness result in the sense of Hadamard. Since weak-$L^p$ spaces embed into $L^2_{loc}$ for $p>2$, solutions have finite local $L^2$-mass and can be realized in physical space in any region with finite measure. In particular, all convergence and stability properties obtained here occur in the sense of $L^2$-mass in any finite-volume region, no matter how large it is.

Asymptotic stability results for NLS type equations and systems are useful
for characterizing solutions that, after initial perturbations, essentially
recovers their profiles at large times. Asymptotic (or not) self-similarity
type symmetries appear in physical situations and are used to describe
phenomena in different spatial-temporal scales, revealing internal symmetry
and structure in a system (see \cite{Du}, \cite{FV-R}). For instance, in
\cite{FKTD}, they show that a type of self-similar parabolic pulse, called
similaritons, is an asymptotic solution to a NLS-like equation with gain.
Although the equation \eqref{NLS} has no scaling, we are able to prove the
existence of a class of solutions asymptotically self-similar with respect
to the scaling \eqref{scal-1} of the  equation
$i\partial _{t}v+Lv=a|v|^{\alpha }v$, as $t\rightarrow \infty$. This means that, for large times, those solutions approximately preserve the self-similar structure of an important related model. Our self-similar asymptotics correspond to homogeneous initial data $u_{0}$ of degree $-d/\alpha $ and can be expressed as
\begin{equation}
v(x,t)=t^{-1/\alpha}V(xt^{-1/d}),  \label{self}
\end{equation}
where the profiles $V$ belong to some weak-$L^{p}$ spaces. This provides another motivation in order to consider weak-$L^{p}$ and Sobolev-weak-$L^{p}$ spaces since they are the natural environment for homogeneous functions and allow the analysis of self-similar asymptotics and pulses as in \cite{Du}, \cite{FV-R} and \cite{FKTD}. Moreover, the existence of self-similar asymptotics can also be used to study wave collapse (blow up solutions) by using the pseudo-conformal transformation (see \cite{CW1}). This singular behavior has appeared in the context of nonlinear optics and been observed in numerical experiments for some Schr\"odinger-type equations and
systems (see, e.g., \cite{Ku}). We believe our solutions may be obtained from numerical methods because they are the limit of a Picard sequence come from a contraction argument and are $L_{loc}^{2}$-stable.

As usual, to study the IVP  \eqref{NLS} we use its equivalent integral  formulation
\begin{equation} \label{eqintds}
u(t) = U(t) u_0 +i\int_{0}^{t}U(t-s)(a|u|^{\alpha} u +b E(|u|^{\gamma})u)(s)ds,
\end{equation}
where $U(t)u_0$ is the solution of the linear problem
\begin{equation*}\label{NLSlinear}
\left\{\begin{array}{l} i \partial_t u + Lu  = 0, \\
u(x,0)  = u_0(x)
\end{array}
\right.\quad  (x, t) \in \re^n\times \re,
\end{equation*}
that is,
\begin{equation*}\label{linear solution}
U(t)u_0(x)=\int_{\re^n}e^{i(x\xi+tq(\xi))}\widehat{u}_0(\xi)d\xi.
\end{equation*}
In view of our assumptions {\bf (H1)} and {\bf (H2)}  the operator $U(t)$ defines a unitary group on
$H^s(\re^n)$, for any $s\in \re$ (see \cite{GS1}). Without loss of generality, from now on we shall consider $t\geq0$ in \eqref{eqintds} and in time-dependent functional spaces dealt with. The case $t\leq0$ can be treated in a complete parallel way.

To simplify the calculations let us rewrite the integral equation \eqref{eqintds} as
\begin{equation*} \label{eqintds1}
u(t) = U(t) u_0 +B(u),
\end{equation*}
where
\begin{equation}\label{defB1}
 B(u)=i\int_{0}^{t}U(t-s)(a|u|^{\alpha} u +b E(|u|^{\gamma})u)(s)ds.
\end{equation}

The paper is organized as follows. In Section \ref{preliminaries} we introduce some notation, recall some results in Lebesgue spaces and prove their extensions to Sobolev-Lorentz spaces. These extensions (and interpolation) will play a key role in Section \ref{mainsection}, where we prove our main results: global existence, scattering, decay properties, asymptotic stability, and existence of asymptotically self-similar solutions for the integral equation $\eqref{eqintds}$.


\section{Notation and Preliminaries}\label{preliminaries}

Let us begin this section by introducing the notation used throughout the
paper. We use $C$ or $c$ to denote various constants that may vary line by line. We
denote by $\|\cdot\|_{L^p}$, $1\leq p\leq\infty$, the usual Lebesgue
$L^p$-norm. The Fourier transform of a function $f=f(x)$, is defined by
$$
(\mathcal{F}f)(\xi)=\widehat{f}(\xi)=\int_{\mathbb{R}^n}e^{-ix\cdot\xi}f(x)dx.
$$
The inverse Fourier transform of a function $g=g(\xi)$ is denoted by
$(\mathcal{F}^{-1}g)(x)=\check{g}(x)$. In $\mathcal{S}'(\re^n)$ (the space of
tempered distributions) the Fourier transform is understood in the usual
sense. $\mathcal{S}(\re^n)$ will denote the class of all Schwartz functions.

The standard Lorentz space is denoted by $L^{(p,q)}$, $0< p,q\leq \infty $%
. In particular, (the weak Lebesgue spaces) $L^{(p,\infty )}=L^{(p,\infty )}(%
\mathbb{R}^{n})$, $1\leq p<\infty $, is defined as
\begin{equation*}
L^{(p,\infty )}=\left\{ f:\mathbb{R}^{n}\rightarrow \mathbb{C}\;\mathrm{%
measurable}\;;\Vert f\Vert _{L^{(p,\infty )}}^{\ast }:=\sup\limits_{\lambda
>0}\lambda {\alpha (\lambda ,f)}^{1/p}<\infty \right\} ,
\end{equation*}%
where
\begin{equation*}
{\alpha (\lambda ,f)}=\mu \big(\{x\in {\mathbb{R}}^{n};|f(x)|>\lambda \}\big)%
,\ \mbox{and}\ \mu \;\;\text{is the Lebesgue measure}.
\end{equation*}%
The quantity $\Vert \cdot \Vert _{L^{(p,\infty )}}^{\ast }$ is a quasi-norm
in $L^{(p,\infty )}.$ As is well-known, for $1<p<\infty $ there exists an
equivalent norm $\Vert \cdot \Vert _{L^{(p,\infty )}}$ in $L^{(p,\infty )}$
(i.e., it induces the same topology than $\Vert \cdot \Vert _{L^{(p,\infty
)}}^{\ast }$), such that $L^{(p,\infty )}$ becomes a Banach space (see, for
instance, Remark 1.4.12 in \cite{Gr}). More precisely, we can define $\Vert
\cdot \Vert _{L^{(p,\infty )}}$ as%
\begin{equation*}
\left\Vert f\right\Vert _{L^{(p,\infty )}}=\sup_{t>0}t^{\frac{1}{p}}f^{\ast
\ast }(t)
\end{equation*}%
where
\begin{equation*}
\text{ \ \ \ }f^{\ast \ast }(t)=\frac{1}{t}\int_{0}^{t}f^{\ast }(s)ds\text{ }%
\ \text{and }\ f^{\ast }(t)=\inf \{\lambda >0:\text{ }{\alpha (\lambda ,f)}%
\leq t\text{ }\}.
\end{equation*}
Moreover,
$$
L^p\hookrightarrow L^{(p, \infty)}
$$
with continuous embedding.

We observe  that if $\ 1< p, q, r<\infty $, then the H\"older inequality
 \begin{align*}
&\|fg\|_{L^{(r, \infty)}}  \leq \|f\|_{L^{(p, \infty)}}\|g\|_{L^{(q,
\infty)}},\qquad \frac{1}{p}+\frac{1}{q}=\frac{1}{r},
\end{align*}
holds (see \cite{O}). Also, if $1\leq r<\infty$ and $1<q,p<\infty$, then the
Young inequality
 \begin{align}\label{youngineq}
&\|f\ast g\|_{L^{(q, \infty)}}  \leq C\|g\|_{L^{(r, \infty)}}\|f\|_{L^{(p, \infty)}},
\qquad \frac{1}{q}=\frac{1}{p}+\frac{1}{r}-1,
\end{align}
is valid (see also \cite[pages 21 and 73]{Gr}), where $L^{(r,\infty)}$ should be replaced by $L^{r}$ when $r=1$.

If $(X_1,X_2)$ is a pair of compatible Banach spaces, $\theta\in(0,1)$, and $1\leq q\leq \infty$, let $(X_1,X_2)_{\theta,\infty}$ denote the interpolation space with respect to the couple $(X_1,X_2)$ using the $K$-Method (see e.g., \cite{BeL}, \cite{Gr}, or \cite{Tr}). We recall that $L^{(p,q)}$ can be defined as an interpolation between two Lebesgue spaces. More precisely, we have:

\begin{theorem}[Interpolation theorem] \label{teoint}
Given $0<p_0<p_1 \leq \infty$, for any $ p,q$ and $\theta$ such that $p_0<q\leq \infty$, $\frac{1}{p}=\frac{1-\theta}{p_0}
+\frac{\theta}{p_1}$ and $0<\theta<1$ we have:
$$(L^{p_0},L^{p_1})_{\theta,q}=L^{(p,q)} \
\text{ with }\  \|f\|_{(L^{p_0},L^{p_1})_{\theta,q}}=\|f\|_{L^{(p,q)}},$$
where, for $q<\infty$,
$$
(L^{p_0},L^{p_1})_{\theta,q}=\{a\; is \text{ Lebesgue measurable};\|a\|_{(L^{p_0},L^{p_1})_{\theta,q}}:=\left(\int_{0}^{\infty}
t^{-\theta}k(t,a)^q \frac{dt}{t}\right)^{\frac{1}{q}}<\infty\}
$$
and
$$(L^{p_0},L^{p_1})_{\theta,\infty}=\{a\; is \text{ Lebesgue measurable};\|a\|_{(L^{p_0},L^{p_1})_{\theta,\infty}}
:=\sup\limits_{t>0}t^{-\theta}k(t,a)<\infty\},
$$
with
$$k(t,a)=\inf\limits_{a=a_0+a_1}(\|a_0\|_{L^{p_0}}+t\|a_1\|_{L^{p_1}}).$$
\end{theorem}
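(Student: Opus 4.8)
The statement is classical (see \cite{BeL}, \cite{Gr}), and my plan is to derive it from Holmstedt's description of the $K$-functional $k(t,f)=k(t,f;L^{p_0},L^{p_1})$. First I would fix $t>0$, let $\rho$ be defined by $1/\rho=1/p_0-1/p_1$ (read as $\rho=p_0$ when $p_1=\infty$), and exhibit a near-optimal decomposition by truncating $f$ at height $\lambda=f^{\ast}(t^{\rho})$: set $f_1=\operatorname{sgn}(f)\min(|f|,\lambda)$ and $f_0=f-f_1$. Since $f_0$ and $f_1$ are equimeasurable with $(f^{\ast}-\lambda)_{+}$ and $\min(f^{\ast},\lambda)$ respectively, a layer-cake computation gives $\|f_0\|_{L^{p_0}}^{p_0}=\int_0^{t^{\rho}}(f^{\ast}(s)-\lambda)^{p_0}\,ds$ and $\|f_1\|_{L^{p_1}}^{p_1}=\lambda^{p_1}t^{\rho}+\int_{t^{\rho}}^{\infty}f^{\ast}(s)^{p_1}\,ds$ (with $\|f_1\|_{L^{\infty}}=\lambda$ when $p_1=\infty$), so this admissible splitting already yields the bound
\[
k(t,f)\ \lesssim\ \Bigl(\int_0^{t^{\rho}}f^{\ast}(s)^{p_0}\,ds\Bigr)^{1/p_0}+t\Bigl(\int_{t^{\rho}}^{\infty}f^{\ast}(s)^{p_1}\,ds\Bigr)^{1/p_1}.
\]

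For the matching lower bound I would use that every decomposition $f=f_0+f_1$ obeys $f^{\ast}(s_0+s_1)\le f_0^{\ast}(s_0)+f_1^{\ast}(s_1)$; choosing $s_0=s_1$ and integrating the $p_0$-th power over $(0,t^{\rho})$ and the $p_1$-th power over $(t^{\rho},\infty)$ gives the reverse estimate, hence Holmstedt's formula
\[
k(t,f)\ \simeq\ \Bigl(\int_0^{t^{\rho}}f^{\ast}(s)^{p_0}\,ds\Bigr)^{1/p_0}+t\Bigl(\int_{t^{\rho}}^{\infty}f^{\ast}(s)^{p_1}\,ds\Bigr)^{1/p_1},
\]
which sharpens to the exact identity $k(t,f)=\bigl(\int_0^{t^{p_0}}f^{\ast}(s)^{p_0}\,ds\bigr)^{1/p_0}$ in the endpoint case $p_1=\infty$. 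Substituting $u=t^{\rho}$ in $\|f\|_{(L^{p_0},L^{p_1})_{\theta,q}}^{q}=\int_0^{\infty}\bigl(t^{-\theta}k(t,f)\bigr)^{q}\tfrac{dt}{t}$ and using the arithmetic identities $-\theta p_0/\rho=p_0/p-1$ and $(1-\theta)p_1/\rho=p_1/p-1$ (both equivalent to $1/p=(1-\theta)/p_0+\theta/p_1$), one finds that $\|f\|_{(L^{p_0},L^{p_1})_{\theta,q}}^{q}$ is comparable, up to the harmless constant $1/\rho$, to
\[
\int_0^{\infty}\Bigl(u^{\,p_0/p-1}\!\int_0^{u}\!f^{\ast}(s)^{p_0}\,ds\Bigr)^{q/p_0}\frac{du}{u}+\int_0^{\infty}\Bigl(u^{\,p_1/p-1}\!\int_u^{\infty}\!f^{\ast}(s)^{p_1}\,ds\Bigr)^{q/p_1}\frac{du}{u}.
\]

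The first integral is controlled by Hardy's inequality $\int_0^{\infty}\bigl(u^{-b}\int_0^u g\bigr)^{m}\tfrac{du}{u}\le C\int_0^{\infty}\bigl(u^{1-b}g\bigr)^{m}\tfrac{du}{u}$ applied with $g=(f^{\ast})^{p_0}$, $b=1-p_0/p>0$ and $m=q/p_0$, and it is exactly here that the hypothesis $p_0<q$ (ensuring $m\ge1$) is needed; the second integral is controlled similarly by a dual Hardy inequality, which when $q<p_1$ follows from a dyadic decomposition of $\int_u^{\infty}$ using that $f^{\ast}$ is decreasing. Both estimates produce $C\int_0^{\infty}\bigl(u^{1/p}f^{\ast}(u)\bigr)^{q}\tfrac{du}{u}$, a fixed multiple of $\|f\|_{L^{(p,q)}}^{q}$, giving $\|f\|_{(L^{p_0},L^{p_1})_{\theta,q}}\lesssim\|f\|_{L^{(p,q)}}$. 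The reverse bound is immediate: monotonicity of $f^{\ast}$ gives $\int_0^{u}f^{\ast}(s)^{p_0}\,ds\ge u\,f^{\ast}(u)^{p_0}$, so $u^{-\theta/\rho}k(u^{1/\rho},f)\ge u^{1/p}f^{\ast}(u)$ since $1/p_0-\theta/\rho=1/p$, and therefore $\|f\|_{(L^{p_0},L^{p_1})_{\theta,q}}\gtrsim\|f\|_{L^{(p,q)}}$.

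To upgrade this equivalence to the literal equality of norms asserted in the theorem, one adopts the $f^{\ast\ast}$-normalization of $\|\cdot\|_{L^{(p,q)}}$ (the one already used here for $q=\infty$) and reduces, via the reiteration theorem for the real method, to the couple $(L^1,L^{\infty})$, for which $k(t,f)=\int_0^{t}f^{\ast}(s)\,ds=t\,f^{\ast\ast}(t)$ \emph{exactly}; then $t^{-\theta}k(t,f)=t^{1/p}f^{\ast\ast}(t)$ and the identification is verbatim, the case $q=\infty$ being the same with integrals replaced by suprema. The main obstacle is the two-sided Holmstedt estimate --- i.e.\ verifying that the truncation decomposition is essentially optimal --- together with, if one wants literal equality rather than mere equivalence of norms, the accompanying constant bookkeeping; both are standard and may be found in \cite{BeL}, \cite{Gr}.
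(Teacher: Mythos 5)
The paper offers no argument of its own here: its ``proof'' is a one-line citation of Theorem 5.2.1 in \cite{BeL}. Your proposal essentially reconstructs the classical argument behind that citation, and in outline it is correct: the truncation of $f$ at height $f^{*}(t^{\rho})$ gives the upper bound for $k(t,f)$, the subadditivity of rearrangements gives the lower bound, the change of variables $u=t^{\rho}$ together with your two arithmetic identities converts $\|f\|_{(L^{p_0},L^{p_1})_{\theta,q}}$ into the two Hardy-type integrals, the hypothesis $p_0<q$ enters exactly where you say it does (and your dyadic fix for the tail term when $q<p_1$ is the right one), and the reverse bound from monotonicity of $f^{*}$ is fine. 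So what you have is a self-contained proof of the identification of spaces with \emph{equivalent} quasi-norms, which is all the paper ever uses (its interpolation arguments, e.g.\ Lemma \ref{extension}, Remark \ref{equinorm} and Lemma \ref{extensionM}, only need two-sided bounds).

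Two caveats. First, your Holmstedt lower bound is stated too quickly: after $f^{*}(s_0+s_1)\le f_0^{*}(s_0)+f_1^{*}(s_1)$ you still need a H\"older step on $(0,t^{\rho})$ to convert $\bigl(\int_0^{t^{\rho}}(f_1^{*})^{p_0}\,ds\bigr)^{1/p_0}$ into $t\|f_1\|_{L^{p_1}}$, and a corresponding estimate for $f_0^{*}$ on the tail; routine, but it does not follow from ``integrating the $p_0$-th and $p_1$-th powers'' alone. Second, the exact equalities you invoke are overstated: $k(t,f;L^{p_0},L^{\infty})=\bigl(\int_0^{t^{p_0}}(f^{*})^{p_0}\,ds\bigr)^{1/p_0}$ is an identity only for $p_0=1$ (for $p_0>1$ one direction holds with constant one, the other only up to a constant), and the final upgrade to literal equality of norms via reiteration cannot work, since the reiteration theorem itself only produces equivalent norms; exact equality is special to the couple $(L^{1},L^{\infty})$ with the $f^{**}$-based norm. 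The statement in the paper is itself loose on this point, so the honest conclusion of your argument should read $\|f\|_{(L^{p_0},L^{p_1})_{\theta,q}}\simeq\|f\|_{L^{(p,q)}}$, which is exactly what the rest of the paper requires.
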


\begin{proof}
 We refer the reader to \cite[Theorem 5.2.1]{BeL}.
\end{proof}

\begin{remark}\label{remark2}
Theorem \ref{teoint} also holds if we replace the spaces $L^{p_k}=L^{p_k}(\R^n)$ by $L^{p_k}(\mathcal{B})=L^{p_k}(\R^n;\mathcal{B})$, the space of all measurable functions with values in the Banach space $\mathcal{B}$. In this case, $L^{p,q}(\mathcal{B})=L^{p,q}(\R^n;\mathcal{B})$ is the interpolation space.
\end{remark}

As usual, the operators $J^{-s}$ and $\Lambda^{-s}$ will denote the Bessel and Riesz potentials of order $s$, thus
$$
J^sf(x)=\{(1+|\xi|^2)^{s/2}\widehat{f}\}^{\vee}(x)
$$
and
$$
\Lambda^sf(x)=(|\xi|^{s}\widehat{f})^{\vee}(x).
$$
The base space we are interested in is presented next. We define the homogeneous Sobolev-Lorentz space $\dot{H}^s_{p,\infty}$  to be the set of all tempered distributions $f$ such that $\Lambda^sf$  belongs to $L^{(p,\infty)}$, that is,
$$
\dot{H}^s_{p,\infty}:=\{ f\in \mathcal{S}';\; \|\Lambda^sf\|_{L^{(p,\infty)}}<\infty\}.
$$
The inhomogeneous space ${H}^s_{p,\infty}$ is defined in a similar fashion by setting
$$
{H}^s_{p,\infty}:=\{ f\in \mathcal{S}';\; \|J^sf\|_{L^{(p,\infty)}}<\infty\}.
$$

Let us recall the Littlewood-Paley theory:
let $\widehat{\varphi}\in C_0^\infty(\mathbb{R}^n)$ be a function satisfying
$0\leq\widehat{\varphi}\leq1$, $\widehat{\varphi}=1$ if $|\xi|\leq1$, and
$\widehat{\varphi}=0$ if $|\xi|>2$. Define
$$
\widehat{\psi}(\xi)=\widehat{\varphi}(\xi)-\widehat{\varphi}(2\xi), \qquad
\widehat{\psi}_j(\xi)=\widehat{\psi}(2^{-j}\xi), \quad j\in \mathbb{Z},
$$
so that
$$
\sum_{j\in\mathbb{Z}}\widehat{\psi}_j(\xi)=1, \;\;\xi\neq0, \quad {\rm
and}\quad {\rm supp}\,(\widehat{\psi}_j)\subset \{2^{j-1}\leq|\xi|\leq
2^{j+1}\}.
$$
The Littlewood-Paley multipliers $\Delta_j$ are defined as
\begin{equation}\label{defdelta}
\Delta_jf=(\widehat{\psi}_j\widehat{f})^{\vee}=\psi_j\ast f, \qquad j\in
\mathbb{Z}.
\end{equation}
Also, let $\widehat{\eta}$ be another smooth function supported in
$\{1/4<|\xi|< 4\}$ such that $\widehat{\eta}= 1$ on supp$(\widehat{\psi})$.
We define $\widetilde{\Delta}_j$ like $\Delta_j$ with $\eta$ instead of
$\psi$. Thus, the identity
\begin{equation}\label{deltadelta}
 \widetilde{\Delta}_j\Delta_j=\Delta_j
\end{equation}
holds. With this notation in hand, the {\it Littlewood-Paley function} defines an equivalent norm in the Lebesgue spaces. That is to say:

\begin{theorem}[Littlewood-Paley]\label{LP}
Let  $1<p<\infty$. Then there exist positive constants $c_p$ and $C_p$ such that, for any $f \in L^p(\R^n)$,
$$c_p\|f\|_{p}\leq \|\big(\sum_j|\Delta_j f|^2\big)^{1/2}\|_{p}\leq C_p\|f\|_{p}. $$
\end{theorem}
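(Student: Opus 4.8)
The plan is to deduce Theorem \ref{LP} from Khintchine's inequality combined with the classical Mikhlin--H\"ormander multiplier theorem. Let $\{r_j\}_{j\in\mathbb{Z}}$ be a system of independent random signs (Rademacher functions) on the probability space $[0,1]$, reindexed by $\mathbb{Z}$ via any bijection with $\mathbb{N}$, and recall Khintchine's inequality: for each $0<q<\infty$ there are $A_q,B_q>0$ such that, for every finitely supported $(a_j)\subset\mathbb{C}$,
$$
A_q\Big(\sum_j|a_j|^2\Big)^{1/2}\le\Big\|\sum_j r_j a_j\Big\|_{L^q(0,1)}\le B_q\Big(\sum_j|a_j|^2\Big)^{1/2}.
$$
Applying this with $q=p$ pointwise in $x$ to the sequence $(\Delta_jf(x))_j$, raising to the $p$-th power, and using Fubini's theorem, one gets
$$
\Big\|\big(\textstyle\sum_j|\Delta_jf|^2\big)^{1/2}\Big\|_p^p\ \sim\ \int_0^1\Big\|\sum_j r_j(\omega)\,\Delta_jf\Big\|_p^p\,d\omega .
$$
Thus the theorem will follow once we bound, uniformly in $\omega$, the operator $T_\omega f=\sum_j r_j(\omega)\Delta_jf$, which is the Fourier multiplier with symbol $m_\omega(\xi)=\sum_j r_j(\omega)\widehat{\psi}_j(\xi)$.

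The next step is to check that $m_\omega$ satisfies the Mikhlin--H\"ormander condition with constants independent of $\omega$. Since $\operatorname{supp}\widehat{\psi}_j\subset\{2^{j-1}\le|\xi|\le 2^{j+1}\}$, at most two or three terms of the defining sum are nonzero at any fixed $\xi\neq0$, and each $\widehat{\psi}_j$ is the $2^{-j}$-dilate of the fixed Schwartz bump $\widehat{\psi}$; hence $|\partial^\alpha m_\omega(\xi)|\le C_\alpha|\xi|^{-|\alpha|}$ for $|\alpha|\le\lfloor n/2\rfloor+1$, with $C_\alpha$ independent of $\omega$. By the Mikhlin multiplier theorem, $\|T_\omega f\|_p\le C_p\|f\|_p$ for all $1<p<\infty$, uniformly in $\omega$; integrating in $\omega$ and using the displayed equivalence yields the right-hand inequality $\|(\sum_j|\Delta_jf|^2)^{1/2}\|_p\le C_p\|f\|_p$ for all $f\in L^p$.

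For the reverse inequality I would argue by duality. Given $f\in L^p$, take $g\in L^{p'}$ with $\|g\|_{p'}\le1$; using that $\sum_j\widehat{\psi}_j\equiv1$ on $\mathbb{R}^n\setminus\{0\}$ (so $f=\sum_j\Delta_jf$ for $f\in L^p$, whose Fourier transform does not charge $\{0\}$) together with the reproducing identity \eqref{deltadelta}, write $\langle f,g\rangle=\sum_j\langle\Delta_jf,\widetilde{\Delta}_jg\rangle$, apply the Cauchy--Schwarz inequality in $j$ pointwise and then H\"older's inequality in $x$ to obtain
$$
|\langle f,g\rangle|\le\Big\|\big(\textstyle\sum_j|\Delta_jf|^2\big)^{1/2}\Big\|_p\,\Big\|\big(\textstyle\sum_j|\widetilde{\Delta}_jg|^2\big)^{1/2}\Big\|_{p'}.
$$
Since $\widetilde{\Delta}_j$ is a Littlewood--Paley operator of exactly the same type as $\Delta_j$ (built from $\widehat\eta$ instead of $\widehat\psi$), the upper bound already proven gives $\|(\sum_j|\widetilde{\Delta}_jg|^2)^{1/2}\|_{p'}\le C_{p'}\|g\|_{p'}\le C_{p'}$; taking the supremum over such $g$ yields $\|f\|_p\le C_{p'}\|(\sum_j|\Delta_jf|^2)^{1/2}\|_p$, i.e. the left-hand inequality with $c_p=C_{p'}^{-1}$.

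The main obstacle is the uniform-in-$\omega$ Mikhlin estimate for the randomized symbol $m_\omega$ (equivalently, if one prefers, setting up the vector-valued Calder\'on--Zygmund argument correctly), together with the minor technical care needed to justify the decomposition $f=\sum_j\Delta_jf$ in $L^p$ and the a.e.\ convergence of $\sum_j|\Delta_jf|^2$; once these are in place, the remaining steps are routine applications of Khintchine's inequality, Fubini's theorem and duality. Alternatively, the statement may simply be cited from \cite[Chapter 6]{Gr}.
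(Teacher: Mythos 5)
Your argument is correct, and it is one of the two standard proofs of the Littlewood--Paley theorem. The paper itself offers no proof: it simply cites Theorem 5.1.2 of \cite{Gr}, whose proof runs the vector-valued Calder\'on--Zygmund route you mention in your closing remark (the map $f\mapsto\{\Delta_jf\}_j$ is an $\ell^2$-valued singular integral operator, bounded on $L^2$ by Plancherel and with kernel satisfying H\"ormander's condition). Your Khintchine--plus--Mikhlin randomization argument is an equally classical alternative, and the uniform-in-$\omega$ symbol estimate $|\partial^{\alpha}m_\omega(\xi)|\le C_\alpha|\xi|^{-|\alpha|}$ is justified exactly as you say, since at each $\xi\neq0$ only boundedly many dilates of the fixed bump $\widehat\psi$ overlap and $|r_j(\omega)|=1$. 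The duality step for the reverse inequality is also the standard one; the only cosmetic imprecision is that $\langle f,g\rangle=\sum_j\langle\Delta_jf,\widetilde{\Delta}_j^{\,*}g\rangle$ involves the adjoint $\widetilde{\Delta}_j^{\,*}$ (convolution with $\overline{\eta_j(-\cdot)}$) rather than $\widetilde{\Delta}_j$ itself, which is harmless since it is a Littlewood--Paley operator of the same type and the upper bound applies to it verbatim. The technical points you flag (the identity $f=\sum_j\Delta_jf$ for $f\in L^p$, modulo polynomials supported at the origin in frequency, and the convergence of the square function) are indeed the only places requiring care, and they are routine.
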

\begin{proof}
See Theorem 5.1.2 in \cite{Gr}.
\end{proof}

Next, we recall that the Hardy-Littlewood maximal function is defined by
\begin{equation} \label{defHL}
Mf(x)=\sup\limits_{r>0}\frac{1}{\mu(B_r)}\int_{B_r}|f(x-y)|dy.
\end{equation}
Here $f$ is any locally integrable function and $B_r$ is the Euclidean ball of radius $r$ centered at the origin. A well known property of $M$ is given below.

\begin{theorem}\label{HL}
The operator $M$ is weak-type $(1,1)$ and strong-type $(p,p)$, $1<p \leq \infty$.
\end{theorem}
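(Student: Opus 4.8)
The plan is to prove the two endpoint bounds and then interpolate. The case $p=\infty$ is immediate: for every $x$ and every $r>0$ we have $\frac{1}{\mu(B_r)}\int_{B_r}|f(x-y)|\,dy\leq\|f\|_{L^\infty}$, so that $\|Mf\|_{L^\infty}\leq\|f\|_{L^\infty}$. For the weak-type $(1,1)$ bound, fix $\lambda>0$ and put $E_\lambda=\{x\in\re^n:Mf(x)>\lambda\}$. By the definition of $Mf$ (and the change of variables $z=x-y$), for each $x\in E_\lambda$ there is a ball $B^x$ centered at $x$ with $\mu(B^x)<\lambda^{-1}\int_{B^x}|f(z)|\,dz\leq\lambda^{-1}\|f\|_{L^1}$; in particular all such balls have uniformly bounded radius. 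Fixing a compact $K\subset E_\lambda$, the family $\{B^x\}_{x\in K}$ covers $K$, so a Vitali-type covering lemma yields a finite pairwise disjoint subfamily $B^{x_1},\dots,B^{x_N}$ with $K\subset\bigcup_{j=1}^N 5B^{x_j}$. Then
$$\mu(K)\leq\sum_{j=1}^N\mu(5B^{x_j})=5^n\sum_{j=1}^N\mu(B^{x_j})<\frac{5^n}{\lambda}\sum_{j=1}^N\int_{B^{x_j}}|f|\leq\frac{5^n}{\lambda}\,\|f\|_{L^1},$$
where disjointness is used in the last inequality. Letting $\mu(K)\uparrow\mu(E_\lambda)$ gives $\sup_{\lambda>0}\lambda\,\mu(E_\lambda)\leq 5^n\|f\|_{L^1}$, which is the weak-type $(1,1)$ estimate.

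For $1<p<\infty$, I would obtain the strong-type $(p,p)$ bound by applying the Marcinkiewicz interpolation theorem to the sublinear operator $M$, with the weak-$(1,1)$ bound and the trivial strong-$(\infty,\infty)$ bound as endpoints. Equivalently, one may argue directly: for fixed $\lambda>0$ write $f=f\chi_{\{|f|>\lambda/2\}}+f\chi_{\{|f|\leq\lambda/2\}}=:g_\lambda+h_\lambda$; since $\|h_\lambda\|_{L^\infty}\leq\lambda/2$ we have $Mh_\lambda\leq\lambda/2$ pointwise, so by sublinearity $\{Mf>\lambda\}\subset\{Mg_\lambda>\lambda/2\}$, and the weak-$(1,1)$ bound gives $\mu(\{Mf>\lambda\})\leq\frac{2\cdot 5^n}{\lambda}\int_{\{|f|>\lambda/2\}}|f|$. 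Inserting this into the layer-cake identity $\|Mf\|_{L^p}^p=p\int_0^\infty\lambda^{p-1}\mu(\{Mf>\lambda\})\,d\lambda$ and interchanging the order of integration by Fubini yields $\|Mf\|_{L^p}^p\leq C\int_{\re^n}|f(x)|\big(\int_0^{2|f(x)|}\lambda^{p-2}\,d\lambda\big)\,dx=C_p\|f\|_{L^p}^p$, the inner integral converging precisely because $p>1$.

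The main obstacle is the covering argument behind the weak-$(1,1)$ estimate: one must verify that the selected balls have uniformly bounded radii so that a Vitali-type selection genuinely applies, reduce to compact subsets of $E_\lambda$ to make the family finite, and track the purely dimensional dilation constant $5^n$, which is what propagates into the constants in both endpoint inequalities (and hence, after interpolation, into $C_p$). The remaining ingredients — the $L^\infty$ bound and the layer-cake/interpolation computation — are routine. This is of course a classical result; see, e.g., \cite{Gr}.
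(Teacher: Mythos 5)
Your proof is correct: the paper itself only cites Theorem 2.1.6 in \cite{Gr}, and your argument (the $L^\infty$ bound, the Vitali-type covering argument for the weak $(1,1)$ estimate, and Marcinkiewicz interpolation or the equivalent layer-cake computation for $1<p<\infty$) is precisely the classical proof that citation refers to. No gaps worth flagging beyond the routine remarks you already make about measurability of the level sets and the uniform bound on the radii.
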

\begin{proof}
See Theorem 2.1.6 in \cite{Gr}.
\end{proof}

In what follows we denote by $L^p(l^2)$, $1<p<\infty$, the space of all sequences $(f_k)$ of measurable functions on $\R^n$ satisfying
$$
\|(f_k)\|_{L^p(l^2)}=\left\|\left( \sum_k|f_k|^2 \right)^{\frac{1}{2}} \right\|_{L^p}<\infty.
$$

Given  a linear operator $T$ acting on $L^p(\R^n)$ and taking values
in the set of measurable functions, we can define an $l^2$-valued extension, said as $\overrightarrow{T}$, by setting
$$
\overrightarrow{T}(\{f_k\}_k):=\{Tf_k\}_k.
$$

\begin{lemma}\label{extentionlemma}
If $T$ is a bounded liner operator from $L^p$ to $L^q$, $1<p,q<\infty$, then $\overrightarrow{T}$ is also a bounded linear operator from $L^p(l^2)$ to $L^q(l^2)$.
\end{lemma}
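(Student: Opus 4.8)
The plan is to reduce at once to finite families and then run the classical Marcinkiewicz--Zygmund vector-valued extension argument based on Khintchine's inequality. Write $\|T\|:=\|T\|_{L^p\to L^q}$. If $(f_k)_{k\ge1}\in L^p(l^2)$, then for each $k$ we have $\|f_k\|_{L^p}\le\|(f_j)\|_{L^p(l^2)}<\infty$, so $Tf_k$ is a well-defined element of $L^q$. Hence it suffices to prove, for every $N$,
$$
\Big\|\Big(\sum_{k=1}^N|Tf_k|^2\Big)^{1/2}\Big\|_{L^q}\le C(p,q)\,\|T\|\,\Big\|\Big(\sum_{k=1}^N|f_k|^2\Big)^{1/2}\Big\|_{L^p};
$$
letting $N\to\infty$ and using monotone convergence (the partial sums $\sum_{k\le N}|Tf_k|^2$ increase pointwise) then gives $\overrightarrow{T}(\{f_k\})=\{Tf_k\}\in L^q(l^2)$ together with the desired bound.

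For the finite case the main tool is Khintchine's inequality: for every $0<s<\infty$ there are constants $0<B_s\le A_s<\infty$ with $B_s\|\sum_k a_kr_k\|_{L^s(0,1)}\le(\sum_k|a_k|^2)^{1/2}\le A_s\|\sum_k a_kr_k\|_{L^s(0,1)}$ for all scalars $(a_k)$, where $(r_k)$ are the Rademacher functions on $[0,1]$. Set $g_\omega:=\sum_{k=1}^N r_k(\omega)f_k\in L^p$; by linearity of $T$ we have $\sum_k r_k(\omega)\,Tf_k=T(g_\omega)$ for each $\omega$. Applying Khintchine with $s=q$ pointwise in $x$, then Tonelli, then boundedness of $T$,
$$
\Big\|\Big(\sum_k|Tf_k|^2\Big)^{1/2}\Big\|_{L^q}^q\le A_q^q\int_0^1\!\!\int_{\re^n}|T(g_\omega)(x)|^q\,dx\,d\omega\le A_q^q\,\|T\|^q\int_0^1\|g_\omega\|_{L^p}^q\,d\omega .
$$
It therefore remains to establish the purely mixed-norm inequality $\big(\int_0^1\|g_\omega\|_{L^p_x}^q\,d\omega\big)^{1/q}\le C\,\|(\sum_k|f_k|^2)^{1/2}\|_{L^p}$.

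For that inequality I would split into two cases. If $q\ge p$, Minkowski's integral inequality (admissible since the outer exponent $q$ exceeds the inner exponent $p$) gives $\big\|\,\|g_\omega\|_{L^p_x}\big\|_{L^q_\omega}\le\big\|\,\|g_\omega(x)\|_{L^q_\omega}\big\|_{L^p_x}$, and applying Khintchine with $s=q$ in $\omega$ bounds $\|g_\omega(x)\|_{L^q_\omega}\le B_q^{-1}(\sum_k|f_k(x)|^2)^{1/2}$, which is what we want. If $q\le p$, we use instead that $[0,1]$ has total mass one, so $\|\cdot\|_{L^q_\omega}\le\|\cdot\|_{L^p_\omega}$; then $\big\|\,\|g_\omega\|_{L^p_x}\big\|_{L^q_\omega}\le\big\|\,\|g_\omega\|_{L^p_x}\big\|_{L^p_\omega}=\|g_\omega(x)\|_{L^p_{x,\omega}}$ by Tonelli, and Khintchine with $s=p$ in $\omega$ finishes it. Combining either case with the previous display yields the lemma with $C$ depending only on $p$ and $q$.

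I do not expect a genuine obstacle here, since this is the standard $l^2$-valued extension result (it is available in textbook form, e.g. in \cite{Gr}). The only point needing care is the bookkeeping of the mixed-norm manipulations — in particular, choosing the Khintchine exponent ($s=q$ throughout the first reduction, and then $s=q$ or $s=p$ in the final step according to whether $q\ge p$ or $q\le p$) so that Minkowski's integral inequality is always invoked in the permissible direction. An equivalent route replaces the Rademacher functions by independent standard Gaussians, which makes both the identity $\sum_k r_k(\omega)Tf_k=T(g_\omega)$ and the two-sided comparison with the $\ell^2$-norm equally transparent.
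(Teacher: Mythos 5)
Your proof is correct and is precisely the classical Marcinkiewicz--Zygmund/Khintchine argument; the paper itself gives no argument here, proving the lemma simply by citing Theorem 4.5.1 in \cite{Gr}, which is established by exactly the randomization-plus-Minkowski scheme you carry out. So you have in effect reproduced the proof of the cited reference, and no gap remains.
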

\begin{proof}
See Theorem 4.5.1 in \cite{Gr}.
\end{proof}

As an immediate consequence of  Lemma \ref{extentionlemma} and Theorem \ref{HL} the Hardy-Littlewood maximal function has a bounded $l^2$-valued extension (See Example 9.5.9 in \cite{Gr1}). More precisely.

\begin{corollary}
If $\overrightarrow{M}$ denotes the $l^2$-valued extension of $M$ then, for $1<p<\infty$,
\begin{equation} \label{vectorHL}
\|\overrightarrow{M}(\{f_k\})\|_{L^p(l^2)}\leq C_p\|\{f_k\}\|_{L^p(l^2)},
\end{equation}
for some constant $C_p>0$.
\end{corollary}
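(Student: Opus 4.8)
The estimate \eqref{vectorHL} is the Fefferman--Stein vector-valued maximal inequality, and the plan is to deduce it directly from the scalar bounds of Theorem \ref{HL}: since $M$ is only sublinear, $\overrightarrow{M}$ is not the $l^2$-extension of a single linear operator, so Lemma \ref{extentionlemma} does not apply verbatim and a genuine argument is needed. By monotone convergence it suffices to bound $\big\|\big(\sum_{k=1}^{N}(Mf_k)^2\big)^{1/2}\big\|_{L^p}$ by $C_p\big\|\big(\sum_{k=1}^{N}|f_k|^2\big)^{1/2}\big\|_{L^p}$ with $C_p$ independent of $N$; write $\Phi=\big(\sum_{k\le N}(Mf_k)^2\big)^{1/2}$ and $\phi=\big(\sum_{k\le N}|f_k|^2\big)^{1/2}$, and note $\phi\in L^p$ and (since $|f_k|\le\phi$ and $1<p<\infty$) each $Mf_k\in L^p$, so $\Phi\in L^p$. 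The case $p=2$ is immediate: by Tonelli's theorem and the strong $(2,2)$ bound of Theorem \ref{HL}, $\|\Phi\|_{L^2}^2=\sum_k\|Mf_k\|_{L^2}^2\le C\sum_k\|f_k\|_{L^2}^2=C\|\phi\|_{L^2}^2$.

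For $2<p<\infty$ I would use duality in $L^{p/2}$: pick $0\le w$ with $\|w\|_{L^{(p/2)'}}\le1$ and $\|\Phi\|_{L^p}^2=\big\|\textstyle\sum_k(Mf_k)^2\big\|_{L^{p/2}}=\int_{\re^n}\sum_k(Mf_k)^2\,w\,dx$. The crucial ingredient is the weighted estimate
\[
\int_{\re^n}(Mg)^2\,w\,dx\ \le\ c_n\int_{\re^n}|g|^2\,(Mw)\,dx ,
\]
valid for every nonnegative weight $w$. This in turn follows from the Fefferman--Stein weak bound $w(\{Mg>\lambda\})\le\tfrac{c_n}{\lambda}\int_{\{|g|>\lambda/2\}}|g|\,(Mw)\,dx$ --- obtained by applying the Vitali covering lemma to the open set $\{Mg_1>\lambda/2\}$, where $g_1=g\chi_{\{|g|>\lambda/2\}}$ and $Mg\le Mg_1+\lambda/2$ --- after integration against $2\lambda\,d\lambda$ and the layer-cake formula. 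Summing over $k$ and applying H\"older in $x$ then gives $\|\Phi\|_{L^p}^2\le c_n\int_{\re^n}\phi^2\,(Mw)\,dx\le c_n\|\phi\|_{L^p}^2\,\|Mw\|_{L^{(p/2)'}}$, and since $(p/2)'\in(1,\infty)$ the strong bound of Theorem \ref{HL} yields $\|Mw\|_{L^{(p/2)'}}\le C$; this proves \eqref{vectorHL} for $p>2$.

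For $1<p<2$ I would dualize at the level of $L^p$: write $\|\Phi\|_{L^p}=\int_{\re^n}\Phi g\,dx$ for some $0\le g$ with $\|g\|_{L^{p'}}\le1$, and decompose $\Phi g=\sum_k(Mf_k)h_k$ with $h_k=g\,Mf_k/\Phi$, so that $\big(\sum_k h_k^2\big)^{1/2}=g$. Using the dual maximal estimate $\int_{\re^n}(Mf)h\,dx\le c_n\int_{\re^n}|f|\,(Mh)\,dx$ (for $h\ge0$), Cauchy--Schwarz in $k$, and H\"older in $x$, this yields $\|\Phi\|_{L^p}\le c_n\int_{\re^n}\phi\,\big(\sum_k(Mh_k)^2\big)^{1/2}dx\le c_n\|\phi\|_{L^p}\,\big\|\big(\sum_k(Mh_k)^2\big)^{1/2}\big\|_{L^{p'}}$, and since $p'>2$ the already-established case bounds the last factor by $C\|g\|_{L^{p'}}\le C$. (Alternatively, having \eqref{vectorHL} for all $p\ge2$, one may interpolate with the vector-valued weak $(1,1)$ bound for $\overrightarrow{M}$ --- proved from a Calder\'on--Zygmund decomposition of $\phi$ --- to recover $1<p<2$.) The main obstacle is exactly this passage away from $p=2$; the real content is the two maximal estimates above, both of which rest only on the weak $(1,1)$ and strong $(p,p)$ properties recorded in Theorem \ref{HL}, after which the self-improving bootstrap supplies the whole range $1<p<\infty$.
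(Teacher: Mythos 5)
Your cases $p=2$ and $p>2$ are correct (the weighted bound $\int_{\re^n}(Mg)^2w\,dx\le c_n\int_{\re^n}|g|^2Mw\,dx$ via the Fefferman--Stein weak estimate and the layer-cake formula, then duality in $L^{p/2}$, is the standard argument), but the main route you give for $1<p<2$ rests on a false lemma. The ``dual maximal estimate'' $\int_{\re^n}(Mf)\,h\,dx\le c_n\int_{\re^n}|f|\,(Mh)\,dx$ does \emph{not} hold for general $h\ge0$: taking $h\equiv1$ (so $Mh\equiv1$) it would give $\|Mf\|_{L^1}\le c_n\|f\|_{L^1}$, which fails for every nonzero $f$ since $Mf(x)\gtrsim|x|^{-n}$ at infinity. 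Only the weak-type substitute $w(\{Mf>\lambda\})\le\frac{c_n}{\lambda}\int_{\re^n}|f|\,Mw\,dx$ survives at the $L^1$ level, and this is exactly why the standard proofs do not dualize below $p=2$ but instead establish a vector-valued weak $(1,1)$ bound by a Calder\'on--Zygmund decomposition of $\phi=\bigl(\sum_k|f_k|^2\bigr)^{1/2}$ and interpolate with the $p=2$ case. Your parenthetical remark points at precisely this correct route, but as written it is a one-line pointer, so the range $1<p<2$ is not actually established by your argument.

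For comparison, the paper does not prove \eqref{vectorHL} at all: it records it as an immediate consequence of Lemma \ref{extentionlemma} and Theorem \ref{HL}, with a citation to Grafakos (Example 9.5.9), i.e.\ it treats it as the known Fefferman--Stein vector-valued maximal inequality. Your observation that Lemma \ref{extentionlemma} does not apply verbatim because $M$ is merely sublinear is correct and is a fair caveat to the paper's phrasing, and your proof attempt goes well beyond what the paper offers; to make it complete, replace the $1<p<2$ duality step by the vector-valued weak $(1,1)$ estimate plus Marcinkiewicz interpolation (or simply cite the Fefferman--Stein theorem, as the paper does).
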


By using the real interpolation method (see, for instance, \cite{BeL} or \cite{Gr} for details) we will extend the previous results to weak Lebesgue spaces. We start with the Littlewood-Paley inequality.

\begin{lemma}\label{extension}
Let  $1<p<\infty$. Then there exist positive constants $c_p$ and $C_p$ such that, for any $f \in L^{(p,\infty)}(\R^n)$,
\begin{equation}\label{LPweal}
c_p\|f\|_{L^{(p,\infty)}}\leq \|\big(\sum_j|\Delta_j f|^2\big)^{1/2}\|_{L^{(p,\infty)}}\leq C_p\|f\|_{L^{(p,\infty)}}.
\end{equation}
 \end{lemma}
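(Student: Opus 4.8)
The plan is to obtain \eqref{LPweal} from the classical Littlewood–Paley inequality of Theorem \ref{LP} by real interpolation, viewing the Littlewood–Paley square function as (the norm of) a vector-valued linear operator. Fix an auxiliary frequency projection and recall the identity \eqref{deltadelta}, $\widetilde{\Delta}_j\Delta_j=\Delta_j$; this will let me treat the two inequalities in \eqref{LPweal} separately but symmetrically. For the right-hand (``easy'') inequality, consider the linear operator $S\colon f\mapsto \{\Delta_j f\}_{j\in\mathbb{Z}}$ mapping scalar functions to $\ell^2$-valued functions. By Theorem \ref{LP} this operator is bounded from $L^{p_0}(\R^n)$ to $L^{p_0}(\ell^2)$ and from $L^{p_1}(\R^n)$ to $L^{p_1}(\ell^2)$ for any $1<p_0<p<p_1<\infty$. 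By the real interpolation functor $(\cdot,\cdot)_{\theta,\infty}$ with the appropriate $\theta$ determined by $\frac1p=\frac{1-\theta}{p_0}+\frac{\theta}{p_1}$, together with Theorem \ref{teoint} on the scalar side and its vector-valued version recorded in Remark \ref{remark2} (applied with $\mathcal{B}=\ell^2$), we get $S\colon L^{(p,\infty)}(\R^n)\to L^{(p,\infty)}(\ell^2)$ boundedly, which is exactly the upper bound in \eqref{LPweal}.

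For the left-hand (``hard'') inequality one cannot simply invert $S$, so instead I would realize $f$ from its pieces by a linear reconstruction operator. Using \eqref{deltadelta}, write $f=\sum_j \Delta_j f=\sum_j \widetilde{\Delta}_j(\Delta_j f)$, and define the linear operator $R$ sending an $\ell^2$-valued function $\{g_j\}_j$ to $\sum_j \widetilde{\Delta}_j g_j$. The classical square-function theory (again Theorem \ref{LP}, in its ``converse'' form, which follows by duality from the direct inequality applied to the $\widetilde{\Delta}_j$'s) gives that $R$ is bounded from $L^{p_0}(\ell^2)$ to $L^{p_0}(\R^n)$ and from $L^{p_1}(\ell^2)$ to $L^{p_1}(\R^n)$. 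Interpolating with $(\cdot,\cdot)_{\theta,\infty}$ as before, and using Theorem \ref{teoint} and Remark \ref{remark2}, yields $R\colon L^{(p,\infty)}(\ell^2)\to L^{(p,\infty)}(\R^n)$. Applying this to $g_j=\Delta_j f$ and using $R(\{\Delta_j f\}_j)=f$ gives
$$
\|f\|_{L^{(p,\infty)}}=\|R(\{\Delta_j f\}_j)\|_{L^{(p,\infty)}}\le C\,\big\|\{\Delta_j f\}_j\big\|_{L^{(p,\infty)}(\ell^2)}=C\,\big\|\big(\textstyle\sum_j|\Delta_j f|^2\big)^{1/2}\big\|_{L^{(p,\infty)}},
$$
which is the lower bound.

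The step I expect to require the most care is the bookkeeping around the vector-valued interpolation identity: one must check that $\big\|(\sum_j|\Delta_j f|^2)^{1/2}\big\|_{L^{(p,\infty)}}$ genuinely coincides (up to equivalence) with the norm of $\{\Delta_j f\}_j$ in the interpolation space $(L^{p_0}(\ell^2),L^{p_1}(\ell^2))_{\theta,\infty}$, i.e.\ that Remark \ref{remark2} applies verbatim with $\mathcal{B}=\ell^2$ and that the resulting space is $L^{(p,\infty)}(\R^n;\ell^2)$ with the square-function norm; this is where Theorem \ref{teoint} in its Banach-space-valued form is essential. A minor additional point is to ensure the endpoints $p_0,p_1$ can be chosen strictly around $p$ inside $(1,\infty)$, which is always possible since $1<p<\infty$, and that the constants $c_p,C_p$ obtained depend only on $p$ (and on the fixed choice of $\varphi,\psi,\eta$), not on $f$. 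Everything else is a routine application of the interpolation of linear operators between the pairs $(L^{p_0},L^{p_1})$ and $(L^{p_0}(\ell^2),L^{p_1}(\ell^2))$.
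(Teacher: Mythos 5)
Your argument is correct, and for the upper bound it coincides with the paper's first proof (interpolating the analysis operator $f\mapsto\{\Delta_jf\}_j$ between $L^{p_0}\to L^{p_0}(\ell^2)$ and $L^{p_1}\to L^{p_1}(\ell^2)$ via Theorem \ref{teoint} and Remark \ref{remark2}). For the lower bound you take a genuinely different, and in fact cleaner, route: the paper's first proof restricts the reconstruction map to the subspace $A_p\subset L^p(\ell^2)$ of sequences of the form $(\Delta_jf)_j$ and must then identify $(A_{p_0},A_{p_1})_{\theta,\infty}$ with the corresponding weak-type subspace, which is the most delicate (and least transparent) step of that argument; you instead interpolate the globally defined synthesis operator $R(\{g_j\})=\sum_j\widetilde{\Delta}_jg_j$ on the full spaces $L^{p_k}(\ell^2)$ and only afterwards specialize to $g_j=\Delta_jf$, using \eqref{deltadelta}. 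This avoids the subspace identification entirely and is precisely the retraction/coretraction structure ($S$ and $R$ exhibit $L^p$ as a retract of $L^p(\ell^2)$) that underlies the paper's second proof via Theorems 6.4.2--6.4.3 of \cite{BeL}, so your proof can be viewed as making that retract argument explicit. Two small points deserve a remark in a final write-up: the $L^{p_k}(\ell^2)\to L^{p_k}$ boundedness of $R$ is not literally Theorem \ref{LP} (which concerns the family $\Delta_j$), but follows by the duality argument you indicate from the square-function estimate for the family $\widetilde{\Delta}_j$, valid by the same classical theorem since $\widehat{\eta}$ is a smooth bump supported in an annulus; and the identity $f=\sum_j\widetilde{\Delta}_j\Delta_jf$ should be justified in $\mathcal{S}'$ (modulo the usual low-frequency/polynomial caveat, harmless here since $1<p<\infty$), a point the paper's own proof also passes over silently.
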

\begin{proof}
We will present two different proofs based on the real interpolation method.

{\it First Proof.}
Let us begin by proving the second inequality. Define the following linear operator
\begin{equation}
\begin{split}
&T:L^{p}(\R^n)\rightarrow L^{p}(l^2)\\
&
T(f)=(\Delta_j f)_j.
\end{split}
\end{equation}
From Theorem \ref{LP} we know that $T$ is well-defined. Now,
taking $1<p_0<p_1<\infty$ we also have from Theorem \ref{LP} that $T$ is bounded from $L^{p_k}(\R^n)$ to $L^{p_k}(l^2), k=0,1.$ Applying the real interpolation method we obtain that $T$ is also bounded from $(L^{p_0}(\R^n),L^{p_1}(\R^n)_{\theta,\infty}$
to $(L^{p_0}(l^2),L^{p_1}(l^2)_{\theta,\infty}$ where $0<\theta<1$. Since $$(L^{p_0}(\R^n),L^{p_1}(\R^n)_{\theta,\infty}=L^{(p,\infty)}(\R^n)$$ and $$(L^{p_0}(l^2),L^{p_1}(l^2)_{\theta,\infty}=L^{(p,\infty)}(l^2),$$ where $\frac{1}{p}=\frac{\theta}{p_0}+\frac{1-\theta}{p_1}$, we get the result.

To prove the first inequality we define, for each $1<p<\infty$, the following subspace of $L^{p}(l^2)$:
$$A_p=\{\tilde{f}\in L^{p}(l^2); \exists f \in L^{p}(\R^n) \ s.t.\ \tilde{f}= (\Delta_j f)_j\}.$$
From Theorem \ref{LP} we know that $(A_p,\|\cdot\|_{L^{p}(l^2)})$ is a Banach space. Now consider the linear operator $T_1$ defined by
\begin{equation*}
\begin{split}
&T_1:A_{p}\rightarrow L^{p}(\R^n)\\
&
T_1(\tilde{f}):=\sum_j\Delta_j f=f.
\end{split}
\end{equation*}
Using the same idea as before we take $1<p_0<p_1<\infty$. From Theorem \ref{LP} we obtain that
$T_1$ is bounded from $A_{p_k}$ to $L^{p_k}(\R^n), k=0,1.$ From real interpolation we have that $T_1$ is bounded from $(A_{p_0},A_{p_1})_{\theta,\infty}$
to $(L^{p_0}(\R^n),L^{p_1}(\R^n)_{\theta,\infty}$. Thus, it suffices to prove that $(A_{p_0},A_{p_1})_{\theta,\infty}=A_{p,\infty}$,
where
 $$A_{p,\infty}=\{\tilde{f}\in L^{(p,\infty)}(l^2); \exists f \in L^{(p,\infty)}(\R^n) \ s.t.\ \tilde{f}= (\Delta_j f)_j\}.$$

Let $\tilde{f} \in A_{p,\infty}$. Then $\tilde{f}=(\Delta_j f)_j$ for some $f \in L^{(p,\infty)}(\R^n)$. Therefore,
\begin{equation*}
\begin{split}
&\|{\tilde{f}}\|_{(A_{p_0},A_{p_1})_{\theta,\infty}}=\sup\limits_{t>0}t^{-\theta}\inf\limits_{\tilde{f}=\tilde{f_0}+\tilde{f_1}}(\|{\tilde{f_0}}\|_{L^{p_0}(l^2)}+\|{\tilde{f_1}}\|_{L^{p_1}(l^2)}).
\end{split}
\end{equation*}
Since $\tilde{f_0} \in A_{p_0}$ and $\tilde{f_1} \in A_{p_1}$,
$$\tilde{f_0}=(\Delta_jf_0)_j, \text{ for some }f_0 \in L^{p_0}(\R^n)$$
and
$$\tilde{f_1}=(\Delta_jf_1)_j, \text{ for some }f_1 \in L^{p_1}(\R^n).$$
So,
\begin{equation*}
\begin{split}
&\|{\tilde{f}}\|_{(A_{p_0},A_{p_1})_{\theta,\infty}}=\sup\limits_{t>0}t^{-\theta}\inf_{(\Delta_j f)_j=(\Delta_jf_0)_j+(\Delta_jf_1)_j}(\|{(\Delta_jf_0)_j}\|_{L^{p_0}(l^2)}+\|{(\Delta_jf_1)_j}\|_{L^{p_1}(l^2)})
\end{split}
\end{equation*}
and, in view of  Theorem \ref{LP},
\begin{equation*}
\begin{split}
\|{\tilde{f}}\|_{(A_{p_0},A_{p_1})_{\theta,\infty}}&\leq C \sup\limits_{t>0}t^{-\theta}\inf\limits_{f=f_0+f_1}(\|{f_0}\|_{L^{p_0}(\R^n)}+\|{f_1}\|_{L^{p_1}(\R^n)})\\
&=C\|f\|_{L^{(p,{\infty})}(\R^n)}<\infty.
\end{split}
\end{equation*}
On the other hand, let $\tilde{f} \in (A_{p_0},A_{p_1})_{\theta,\infty}$. Then ${\tilde{f}=\tilde{f_0}+\tilde{f_1}}$, with $\tilde{f_0} \in A_{p_0}$ and $\tilde{f_1} \in A_{p_1}.$ We will show that $\tilde{f} \in A_{p,\infty}.$ In fact,
\begin{equation*}
\|{\tilde{f}}\|_{A_{p,\infty}}= \sup\limits_{t>0}t^{-\theta}\inf\limits_{\tilde{f}=F_0+F_1}(\|{F_0}\|_{L^{p_0}(l^2)}+\|{F_1}\|_{L^{p_1}(l^2)}).
\end{equation*}
Since $ A_{p_0} \subset L^{p_0}(l^2)$ and $ A_{p_1} \subset L^{p_1}(l^2)$ we conclude that
\begin{equation*}
\|{\tilde{f}}\|_{A_{p,\infty}}\leq C \sup\limits_{t>0}t^{-\theta}\inf\limits_{{\tilde{f}=\tilde{f_0}+\tilde{f_1}}}(\|\tilde{f_0}\|_{L^{p_0}(l^2)}+\|\tilde{f_1}\|_{L^{p_1}(l^2)}),
\end{equation*}
where $\tilde{f_0} \in A_{p_0}$ and $\tilde{f_1} \in A_{p_1}.$ Therefore
\begin{equation*}
\|{\tilde{f}}\|_{A_{p,\infty}}\leq C\|{\tilde{f}}\|_{(A_{p_0},A_{p_1})_{\theta,\infty}}<\infty,
\end{equation*}
which shows the desired result.\\

{\it Second Proof.} Define the norms
$$
\trinorm f\trinorm_{L^p}= \|\big(\sum_j|\Delta_j f|^2\big)^{1/2}\|_{L^{p}} \quad \mbox{and} \quad  \trinorm f\trinorm_{L^{(p,\infty)}}=\|\big(\sum_j|\Delta_j f|^2\big)^{1/2}\|_{L^{(p,\infty)}}.
$$
From Theorem \ref{LP}, $\trinorm \cdot\trinorm_{L^p}$ is an equivalent norm in $L^p$, $1<p<\infty$. This means that the identity operator $I:(L^{p_k}, \trinorm \cdot\trinorm_{L^{p_k}})\to (L^{p_k}, \| \cdot\|_{L^{p_k}})$ is continuous for any $1<p_1<p_2<\infty$. Hence, by real interpolation, $I$ also is continuous from $(L^{(p,\infty)}, \trinorm \cdot \trinorm_{L^{(p,\infty)}})$ to  $(L^{(p,\infty)}, \| \cdot \|_{L^{(p,\infty)}})$, $p_1<p<p_2$, which means that the first inequality in \eqref{LPweal} holds.
 Here, to see that the interpolation space between $(L^{p_1}, \trinorm \cdot\trinorm_{L^{p_1}})$ and $(L^{p_2}, \trinorm \cdot\trinorm_{L^{p_2}})$ is indeed $(L^{(p,\infty)}, \trinorm \cdot \trinorm_{L^{(p,\infty)}})$ it is sufficient to recall that being $L^p$ a retract of $L^p(l^2)$, $(L^{p_1},L^{p_2})_{\theta,\infty}$ is also a retract of  $(L^{p_1}(l^2),L^{p_2}(l^2))_{\theta,\infty}$ (see Theorems 6.4.2 and 6.4.3 in \cite{BeL}).

 The second inequality is obtained in a similar fashion, which concludes the proof.
\end{proof}

\begin{remark}\label{equinorm}
In view of Lemma \ref{extension} one can see that the expression
$$
\|\{2^{js}\Delta_j f\}\|_{L^{(p,\infty)}(l^2)}=  \left\|\left( \sum_j2^{2js}|\Delta_j f|^2\right)^{1/2}\right\|_{L^{(p,\infty)}}
$$
defines an equivalent norm in $\dot{H}^s_{p,\infty}$. Indeed, note that $2^{js}\Delta_j f=\Delta_j^\sigma f_s$, where $f_s=\Lambda^sf$ and $\Delta_j^\sigma$ is the ``Littlewood-Paley multiplier'' given by $\widehat{\Delta_j^\sigma f}(\xi)=\widehat{\sigma}(2^{-j}\xi)\widehat{f}$, with $\widehat{\sigma}(\xi)=|\xi|^{-s}\widehat{\psi}_j(\xi)$. Thus applying Lemma \ref{extension} with $\Delta_j^\sigma$ instead of $\Delta_j$, we obtain
$$
\|\{2^{js}\Delta_j f\}\|_{L^{(p,\infty)}(l^2)}= \|\{\Delta_j^\sigma f_s \}\|_{L^{(p,\infty)}(l^2)}\sim \|f_s\|_{L^{(p,\infty)}}.
$$
The interested reader will find the details (for $L^p$ instead of $L^{(p,\infty)}$) in \cite[Theorem 6.2.7]{Gr1}. In particular, this shows that the space $\dot{H}^s_{p,\infty}$ can be obtained as an interpolation space between two Sobolev spaces. More precisely
$$
\dot{H}^s_{p,\infty}=(\dot{H}^s_{p_0},\dot{H}^s_{p_1})_{\theta,\infty}, \qquad p_0\neq p_1, \quad 0<\theta<1,
$$
where $\frac{1}{p}=\frac{1-\theta}{p_0}+\frac{\theta}{p_1}$ (see Theorem 1 in \cite[page 184]{Tr}).
\end{remark}

In what follows, the space $L^{(p,\infty)}(l^2)$ is defined similarly to $L^{p}(l^2)$ when replacing $\|\cdot\|_{L^p}$ by $\|\cdot\|_{L^{(p,\infty)}}$.

\begin{lemma}\label{extensionM}
	If $\overrightarrow{M}$ denotes the $l^2$-valued extension of $M$ then, for $1<p<\infty$,
and $\{f_k\} \in L^{(p,\infty)}(l^2)$,
 $$
 \|\overrightarrow{M}(\{f_k\})\|_{L^{(p,\infty)}(l^2)}\leq C_p\|\{f_k\}\|_{L^{(p,\infty)}(l^2)}.
 $$
\end{lemma}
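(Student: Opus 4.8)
The plan is to obtain the weak-type estimate from its strong-type counterpart, the Corollary containing \eqref{vectorHL}, by real interpolation, in the spirit of the \emph{Second Proof} of Lemma \ref{extension}. Fix $1<p<\infty$ and choose exponents $1<p_0<p<p_1<\infty$ together with $\theta\in(0,1)$ such that $\frac1p=\frac{1-\theta}{p_0}+\frac{\theta}{p_1}$. By that Corollary, $\overrightarrow{M}$ is bounded from $L^{p_k}(l^2)$ into itself for $k=0,1$, say with constants $C_{p_0}$ and $C_{p_1}$. The only new difficulty compared with Lemma \ref{extension} is that $\overrightarrow{M}$ is not linear; it is, however, sublinear in a pointwise $l^2$ sense, and this turns out to be enough.

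The first step is to record this sublinearity. For sequences $\{g_k\}$ and $\{h_k\}$ one has $M(g_k+h_k)(x)\le Mg_k(x)+Mh_k(x)$ for every $k$ and a.e.\ $x$, so Minkowski's inequality in $l^2$ gives
$$\Big(\sum_k|M(g_k+h_k)(x)|^2\Big)^{1/2}\le\Big(\sum_k|Mg_k(x)|^2\Big)^{1/2}+\Big(\sum_k|Mh_k(x)|^2\Big)^{1/2}\quad\text{a.e.}$$
Hence $\|\overrightarrow{M}(\{f_k\})(\cdot)\|_{l^2}$ is pointwise dominated by $\|\overrightarrow{M}(\{g_k\})(\cdot)\|_{l^2}+\|\overrightarrow{M}(\{h_k\})(\cdot)\|_{l^2}$ whenever $\{f_k\}=\{g_k\}+\{h_k\}$.

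Next I would estimate the $K$-functional of $\overrightarrow{M}(\{f_k\})$ relative to the couple $(L^{p_0}(l^2),L^{p_1}(l^2))$. Let $\{f_k\}\in L^{(p,\infty)}(l^2)$; by Remark \ref{remark2} (that is, Theorem \ref{teoint} applied with $\mathcal B=l^2$) this space equals $(L^{p_0}(l^2),L^{p_1}(l^2))_{\theta,\infty}$ with the same norm. Given $t>0$ and any splitting $\{f_k\}=\{g_k\}+\{h_k\}$ with $\{g_k\}\in L^{p_0}(l^2)$ and $\{h_k\}\in L^{p_1}(l^2)$, the pointwise domination above together with the Banach-lattice structure of the spaces $L^{p_k}(l^2)$ lets one split $\overrightarrow{M}(\{f_k\})$ itself pointwise, in proportion to the two dominating functions, as $\overrightarrow{M}(\{f_k\})=A+B$ with $\|A\|_{L^{p_0}(l^2)}\le\|\overrightarrow{M}(\{g_k\})\|_{L^{p_0}(l^2)}$ and $\|B\|_{L^{p_1}(l^2)}\le\|\overrightarrow{M}(\{h_k\})\|_{L^{p_1}(l^2)}$. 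Applying the strong-type bounds of the first paragraph and taking the infimum over all such splittings yields
$$K\big(t,\overrightarrow{M}(\{f_k\});L^{p_0}(l^2),L^{p_1}(l^2)\big)\le\max\{C_{p_0},C_{p_1}\}\,K\big(t,\{f_k\};L^{p_0}(l^2),L^{p_1}(l^2)\big).$$
Multiplying by $t^{-\theta}$, taking the supremum over $t>0$, and then identifying both interpolation norms with $\|\cdot\|_{L^{(p,\infty)}(l^2)}$ through Remark \ref{remark2} gives the asserted inequality with $C_p=\max\{C_{p_0},C_{p_1}\}$.

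The point requiring care — and the only genuine obstacle — is the passage from the pointwise sublinearity estimate to an honest decomposition of $\overrightarrow{M}(\{f_k\})$ into an $L^{p_0}(l^2)$-part and an $L^{p_1}(l^2)$-part; this is where the lattice structure of the vector-valued $L^p$ spaces is used, and it is precisely what replaces the linearity of the operator in the argument of Lemma \ref{extension}. Equivalently, one may simply invoke the real-interpolation (Marcinkiewicz-type) theorem for sublinear operators on the couple $(L^{p_0}(l^2),L^{p_1}(l^2))$, the computation of the endpoint interpolation spaces then being the content of Remark \ref{remark2}.
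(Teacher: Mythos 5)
Your proof is correct and takes essentially the same route as the paper's: interpolate the strong vector-valued bounds \eqref{vectorHL} on $L^{p_0}(l^2)$ and $L^{p_1}(l^2)$ by the real method and identify the resulting interpolation space with $L^{(p,\infty)}(l^2)$ via Remark \ref{remark2}. The only difference is that you make explicit the sublinearity/lattice-splitting point needed to run the $K$-functional argument for the non-linear operator $\overrightarrow{M}$, a detail the paper's proof leaves implicit.
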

\begin{proof}
It suffices to  follow the  ideas  in the proof of the previous lemma. From inequality \eqref{vectorHL} we have that
$\overrightarrow{M}$ is bounded from $L^{p_0}(l^2)$ to $L^{p_0}(l^2)$ and from $L^{p_1}(l^2)$ to $L^{p_1}(l^2)$, where $1<p_0<p_1<\infty$. From real interpolation we get the result.
\end{proof}

Next result is an adapted version to weak Lebesgue spaces of the Sobolev embedding. Since $f(x)=\{|\xi|^{-s}\widehat{f}_s\}^\vee(x)= I_s(f_s)(x),$
where $f_s(x)=\Lambda^sf(x)$ and $I_s=\Lambda^{-s}= |x|^{-(n-s)}*$, it can be proved by using Young's inequality for convolution operators in Lorentz spaces (see \eqref{youngineq} and \cite[page 73]{Gr}).

\begin{lemma}\label{lemaun}
Let $s$ be a real number with $0<s<n$ and let $1\leq p<q<\infty$ satisfy \begin{equation}\label{Iequi3}
s=\frac{n}{p}-\frac{n}{q}.
\end{equation}
Then there exists a positive constant $C=C(n,s,p)$ such that, for any $f\in \dot{H}^s_{p,\infty}$,
\begin{equation}\label{aun}
 \|f\|_{L^{(q, \infty)}}\leq  C\|\Lambda^s f\|_{L^{(p, \infty)}}.
\end{equation}
 \end{lemma}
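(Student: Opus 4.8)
The plan is to reduce the inequality \eqref{aun} to a boundedness statement for the Riesz potential $I_s = \Lambda^{-s}$ acting on Lorentz spaces, exactly as anticipated in the paragraph preceding the statement. Writing $f_s = \Lambda^s f$, one has the pointwise identity $f = I_s(f_s) = c_{n,s}\,|x|^{-(n-s)} * f_s$, so that \eqref{aun} is equivalent to the convolution estimate
\[
\|\,|x|^{-(n-s)} * f_s\,\|_{L^{(q,\infty)}} \le C\,\|f_s\|_{L^{(p,\infty)}}.
\]
First I would record that the kernel $K(x) = |x|^{-(n-s)}$ lies in the weak-Lebesgue space $L^{(r,\infty)}(\re^n)$ with $r = n/(n-s)$; this is the standard computation $\mu(\{|K|>\lambda\}) = \mu(\{|x| < \lambda^{-1/(n-s)}\}) = c_n \lambda^{-n/(n-s)}$, which shows $\|K\|_{L^{(r,\infty)}}^{\ast} < \infty$ and hence, since $1 < r < \infty$ (because $0 < s < n$), that $K \in L^{(r,\infty)}$ with finite norm.

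Next I would invoke the Young inequality in Lorentz spaces, inequality \eqref{youngineq}, in the form $\|K * g\|_{L^{(q,\infty)}} \le C\,\|K\|_{L^{(r,\infty)}}\|g\|_{L^{(p,\infty)}}$, valid provided $1 < p, q < \infty$, $1 \le r < \infty$, and the scaling relation $\frac1q = \frac1p + \frac1r - 1$ holds. It remains only to check that the hypotheses \eqref{Iequi3} and $1 \le p < q < \infty$ force exactly this relation with $r = n/(n-s)$: indeed $\frac1p - \frac1q = \frac sn = \frac{n-s}{n}\cdot\frac{?}{}$, wait — more precisely, $1 - \frac1r = 1 - \frac{n-s}{n} = \frac sn = \frac1p - \frac1q$, which rearranges to $\frac1q = \frac1p + \frac1r - 1$, as required. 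Applying \eqref{youngineq} with $g = f_s$ then yields \eqref{aun} with $C = C(n,s,p) = c_{n,s}\,\|K\|_{L^{(r,\infty)}}$.

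One small point of care, which I expect to be the only genuine obstacle: the Young inequality \eqref{youngineq} as stated requires $1 < p < \infty$, whereas Lemma \ref{lemaun} allows $p = 1$. In that endpoint case $\frac1q = \frac1r = \frac{n-s}{n}$ and \eqref{youngineq} must be read with $L^{(r,\infty)}$ on the kernel replaced by $L^{1}$ — but $K = |x|^{-(n-s)}$ is not integrable, so the plain Young inequality does not apply. Instead, for $p=1$ one uses the weak-type $(1, n/(n-s))$ boundedness of the Riesz potential (the Hardy–Littlewood–Sobolev inequality at its endpoint), i.e. $\|I_s g\|_{L^{(q,\infty)}} \le C\|g\|_{L^1}$ with $q = n/(n-s)$; this is classical and can be cited from \cite[Chapter 1]{Gr}. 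For $1 < p < \infty$ the argument above via \eqref{youngineq} is clean and self-contained, so I would present that as the main line and dispatch the $p=1$ endpoint in a sentence by reference.
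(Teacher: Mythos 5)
Your argument is exactly the proof the paper intends: write $f=\Lambda^{-s}(\Lambda^s f)$ as convolution with $|x|^{-(n-s)}\in L^{(n/(n-s),\infty)}$ and apply the Lorentz-space Young inequality \eqref{youngineq}, with the exponent check $\tfrac1q=\tfrac1p+\tfrac1r-1$ matching \eqref{Iequi3}. Your extra remark that the stated Young inequality excludes $p=1$ and that this endpoint requires the weak-type $(1,n/(n-s))$ Hardy--Littlewood--Sobolev bound is a correct and welcome refinement of a point the paper passes over silently.
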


\begin{remark}\label{remark1}
As a consequence of the previous lemma we obtain that
$$\|u\|_{L^{(\frac{\gamma(\alpha+2)}{\alpha}, \infty)}}\leq  C\|\Lambda^s u\|_{L^{(\alpha+2, \infty)}},$$
for  $0<\alpha<\gamma$ and $s=\frac{n(\gamma -\alpha)}{\gamma(\alpha+2)}.$
\end{remark}

In order to estimate the nonlinear part of our problem we also need the following lemma.

\begin{lemma}\label{lemadeux}
Let $0<\alpha<\gamma$ and suppose that $s$ defined by $s=\frac{n(\gamma -\alpha)}{\gamma(\alpha+2)}$ satisfies $0< s< 1$. Then
\begin{equation}\label{deux}
 \|\Lambda^s(|u|^\alpha u)\|_{L^{(\frac{\alpha+2}{\alpha+1}, \infty)}}\leq C\|u\|_{L^{(\alpha+2, \infty)}}^\alpha \|\Lambda^s u\|_{L^{(\alpha+2, \infty)}}
\end{equation}
and
\begin{equation}\label{deux1}
 \|\Lambda^s(|u|^\gamma u)\|_{L^{(\frac{\alpha+2}{\alpha+1}, \infty)}} \leq C\|\Lambda^s u\|^{\gamma+1}_{L^{(\alpha+2, \infty)}}.
\end{equation}
 \end{lemma}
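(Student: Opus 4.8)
The plan is to prove both inequalities by combining a fractional Leibniz rule (Kato-Ponce type commutator estimate) with the Sobolev embedding of Lemma \ref{lemaun}, all carried out in the weak-$L^p$ (Lorentz) setting. The key point is that the map $z\mapsto |z|^\alpha z$ is, for $\alpha>0$, Hölder continuous of exponent $\min\{1,\alpha+1\}$ but more usefully behaves, under $\Lambda^s$ with $0<s<1$, like a product of $\alpha$ factors of $u$ and one factor of $\Lambda^s u$; this is exactly the content of a Gagliardo-Nirenberg/Leibniz estimate for fractional derivatives of composite functions. So the first step is to record (or prove by the usual Littlewood-Paley paraproduct decomposition, now using the equivalent norm from Remark \ref{equinorm} and the vector-valued maximal inequality from Lemma \ref{extensionM} in place of its $L^p$ analogue) the estimate
\begin{equation*}
\|\Lambda^s(|u|^\alpha u)\|_{L^{(r,\infty)}} \leq C\,\|\,|u|^\alpha\,\|_{L^{(p_1,\infty)}}\,\|\Lambda^s u\|_{L^{(p_2,\infty)}},
\end{equation*}
valid whenever $\tfrac1r = \tfrac1{p_1}+\tfrac1{p_2}$ with all exponents in $(1,\infty)$, i.e. a fractional Leibniz rule in Lorentz spaces. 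Here the restriction $0<s<1$ is what makes $|z|^\alpha z$ regular enough for the paraproduct argument; this hypothesis is used exactly as in the $L^p$ theory.

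For \eqref{deux}: I would take $p_2=\alpha+2$, so that $\|\Lambda^s u\|_{L^{(\alpha+2,\infty)}}$ appears on the right as desired, and then $\tfrac1{p_1} = \tfrac1{r}-\tfrac1{p_2}$ with $r=\tfrac{\alpha+2}{\alpha+1}$, which gives $\tfrac1{p_1} = \tfrac{\alpha+1}{\alpha+2}-\tfrac1{\alpha+2} = \tfrac{\alpha}{\alpha+2}$, i.e. $p_1 = \tfrac{\alpha+2}{\alpha}$. Then $\|\,|u|^\alpha\,\|_{L^{(p_1,\infty)}} = \|u\|_{L^{(\alpha+2,\infty)}}^\alpha$ by the scaling property of the Lorentz quasi-norm under powers (equivalently, $\alpha$-fold Hölder in Lorentz spaces, as quoted in the preliminaries), and \eqref{deux} follows. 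Note that here $s = \tfrac{n(\gamma-\alpha)}{\gamma(\alpha+2)}$ plays no role beyond the assumption $0<s<1$; the exponents balance automatically.

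For \eqref{deux1}: now the exponent is $\gamma+1$ on the right, so I cannot simply repeat the above with $\gamma$ in place of $\alpha$, because $\gamma$ and $\alpha$ are linked through $s$ and the target index is still $\tfrac{\alpha+2}{\alpha+1}$. The idea is to apply the Leibniz rule with $p_2$ chosen so that, after estimating $\||u|^\gamma\|_{L^{(p_1,\infty)}} = \|u\|_{L^{(\gamma p_1,\infty)}}^\gamma$, \emph{each} of the $\gamma+1$ factors gets converted into $\|\Lambda^s u\|_{L^{(\alpha+2,\infty)}}$ via Lemma \ref{lemaun} (equivalently Remark \ref{remark1}). Concretely: Lemma \ref{lemaun} with $s = \tfrac{n(\gamma-\alpha)}{\gamma(\alpha+2)}$ gives $\|u\|_{L^{(q,\infty)}} \leq C\|\Lambda^s u\|_{L^{(\alpha+2,\infty)}}$ with $\tfrac1q = \tfrac1{\alpha+2} - \tfrac{s}{n} = \tfrac1{\alpha+2}-\tfrac{\gamma-\alpha}{\gamma(\alpha+2)} = \tfrac{\alpha}{\gamma(\alpha+2)}$, i.e. $q = \tfrac{\gamma(\alpha+2)}{\alpha}$ — this is exactly the exponent in Remark \ref{remark1}. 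So I should take $\gamma p_1 = q$, hence $p_1 = \tfrac{\alpha+2}{\alpha}$, and $p_2$ determined by $\tfrac1{p_2} = \tfrac1r - \tfrac1{p_1} = \tfrac{\alpha+1}{\alpha+2}-\tfrac{\alpha}{\alpha+2} = \tfrac1{\alpha+2}$, so $p_2 = \alpha+2$. Then $\|\Lambda^s(|u|^\gamma u)\|_{L^{(r,\infty)}} \leq C\|u\|_{L^{(q,\infty)}}^\gamma\|\Lambda^s u\|_{L^{(\alpha+2,\infty)}} \leq C\|\Lambda^s u\|_{L^{(\alpha+2,\infty)}}^{\gamma+1}$, using Remark \ref{remark1} on the first $\gamma$ factors. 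One checks all exponents $r, p_1, p_2, q$ lie in $(1,\infty)$: $r>1$ always, $p_1 = \tfrac{\alpha+2}{\alpha}>1$, $p_2 = \alpha+2>1$, and $q>1$ follows from $0<s<1$ (equivalently $\alpha<\gamma$ and the dimensional constraint implicit in $s<1$, which forces $q<\infty$; $q>1$ is equivalent to $\tfrac{\alpha}{\gamma(\alpha+2)}<1$, clearly true).

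The main obstacle is establishing the fractional Leibniz rule in the weak-$L^p$/Lorentz scale with the composite nonlinearity $|u|^\alpha u$ (which is not smooth at the origin when $\alpha$ is not an even integer). In $L^p$ this is classical (Kato-Ponce, Christ-Weinstein, or the Gagliardo-Nirenberg form $\|\Lambda^s(|u|^\alpha u)\|_{L^r}\lesssim \|u\|_{L^{p_1}}^\alpha\|\Lambda^s u\|_{L^{p_2}}$ for $0<s\le 1$), and the transfer to Lorentz spaces is handled either by real interpolation — running the $L^p$ estimate at two endpoints $p_0<p_1$ and interpolating, exactly as in the proof of Lemma \ref{extension} — or by redoing the Littlewood-Paley paraproduct argument directly, replacing Theorem \ref{LP} by Lemma \ref{extension}, the Hölder inequality by its Lorentz version, and the vector-valued maximal bound \eqref{vectorHL} by Lemma \ref{extensionM}. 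I would use the interpolation route since it is shortest: fix the target Lorentz index, realize it as $(L^{r_0},L^{r_1})_{\theta,\infty}$ and correspondingly split $p_1,p_2$, apply the bilinear $L^p$-valued Leibniz estimate at the two endpoints, and invoke bilinear real interpolation together with Theorem \ref{teoint} and Remark \ref{equinorm}. Once that bilinear estimate is in hand, the rest is the bookkeeping of exponents carried out above.
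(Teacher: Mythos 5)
Your exponent bookkeeping coincides with the paper's: for \eqref{deux} the chain rule is applied with exponents $\tfrac{\alpha+2}{\alpha+1}=r$, $p_1=\tfrac{\alpha+2}{\alpha}$, $p_2=\alpha+2$ (the paper invokes its Lemma \ref{lemaquatre} with $F(x)=|x|^{\alpha}x$, $p=q=\alpha+2$, $k-1=\alpha$), and for \eqref{deux1} the chain rule gives the factor $\Vert u\Vert_{L^{(\gamma(\alpha+2)/\alpha,\infty)}}^{\gamma}$, which is then converted into $\Vert\Lambda^{s}u\Vert_{L^{(\alpha+2,\infty)}}^{\gamma}$ by the Sobolev embedding of Remark \ref{remark1} -- exactly as in the paper, and this is the only place where the specific value of $s$ enters. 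Moreover, your ``second route'' for the key ingredient (redo the Christ--Weinstein Littlewood--Paley argument, replacing Theorem \ref{LP} by Lemma \ref{extension}, H\"older by its Lorentz version, and \eqref{vectorHL} by Lemma \ref{extensionM}) is precisely how the paper proceeds: this is its Lemma \ref{lematrois}, from which Lemma \ref{lemaquatre} follows by Lorentz--H\"older.

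The gap is in the route you say you would actually take. The estimate $\Vert\Lambda^{s}F(u)\Vert_{L^{(r,\infty)}}\leq C\Vert F'(u)\Vert_{L^{(p,\infty)}}\Vert\Lambda^{s}u\Vert_{L^{(q,\infty)}}$ with $F(u)=|u|^{\alpha}u$ is a \emph{chain-rule} (composition) estimate, not a bound for a bilinear operator: both factors on the right are evaluated at the same $u$, and $u\mapsto\Lambda^{s}(|u|^{\alpha}u)$ is neither linear nor multilinear. Hence ``run the $L^{p}$ estimate at two endpoints and invoke (bi)linear real interpolation'' has no operator to which the interpolation theorems apply; real interpolation transfers bounds for linear or multilinear maps, not nonlinear composition inequalities. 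The natural attempt to restore bilinearity -- write $|u|^{\alpha}u=|u|^{\alpha}\cdot u$ and use the genuine bilinear Kato--Ponce/Leibniz rule (as the paper does later, via \cite{CN}, for the product $E(|u|^{\gamma})\,u$) -- produces the extra term $\Vert\Lambda^{s}(|u|^{\alpha})\Vert$, which again requires a chain rule for a non-smooth power (problematic when $\alpha$ is small and non-integer), so that route is circular. Even for honest bilinear maps, a single application of bilinear real interpolation does not yield weak-type spaces in both entries and in the target simultaneously (the Lions--Peetre-type theorems force a summability constraint linking the three second indices); one would have to interpolate one variable at a time. So the interpolation ``shortcut'' should be dropped: the direct Lorentz-space adaptation of Christ--Weinstein that you list as the alternative is the argument that actually carries the proof, and with it your derivation of \eqref{deux} and \eqref{deux1} is correct and identical to the paper's.
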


 To prove Lemma \ref{lemadeux} we  need two additional results. The first one is the Leibniz rule for weak Lebesgue spaces.

 \begin{lemma}\label{lematrois}
Suppose that $F\in C^1(\C,\C)$, $s\in (0,1)$, and assume that $1<p,q,r<\infty$ satisfy $\frac{1}{r}=\frac{1}{p}+\frac{1}{q}$. Then,
\begin{equation}\label{trois}
 \|\Lambda^s F(u)\|_{L^{(r, \infty)}}\leq C\|F'(u)\|_{L^{(p, \infty)}} \|\Lambda^s u\|_{L^{(q, \infty)}},
\end{equation}
as long as the right-hand side is finite.
 \end{lemma}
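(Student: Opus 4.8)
The plan is to prove \eqref{trois} as the weak‑$L^{p}$ counterpart of the classical fractional chain rule of Christ and Weinstein: the estimate $\|\Lambda^{s}F(u)\|_{L^{r}}\le C\|F'(u)\|_{L^{p}}\|\Lambda^{s}u\|_{L^{q}}$ is well known for $0<s<1$ and $1<p,q,r<\infty$ with $\frac1r=\frac1p+\frac1q$ (see, e.g., \cite{Gr1}), and the point is that every harmonic‑analysis ingredient entering its proof is now available in the Lorentz scale. Concretely, I would run the argument through the Littlewood--Paley square function: by Remark \ref{equinorm},
\[
\|\Lambda^{s}F(u)\|_{L^{(r,\infty)}}\sim\Big\|\big(\textstyle\sum_{j}2^{2js}|\Delta_{j}F(u)|^{2}\big)^{1/2}\Big\|_{L^{(r,\infty)}},
\]
and likewise $\|\Lambda^{s}u\|_{L^{(q,\infty)}}\sim\|(\sum_{j}2^{2js}|\Delta_{j}u|^{2})^{1/2}\|_{L^{(q,\infty)}}$, so it suffices to control the square function of $F(u)$.

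Next I would decompose $F(u)$ by the telescoping (Bony‑type) identity $F(u)=\sum_{k}\big(F(S_{k+1}u)-F(S_{k}u)\big)$, with $S_{k}=\sum_{l<k}\Delta_{l}$, and use the mean value theorem to write $F(S_{k+1}u)-F(S_{k}u)=\Delta_{k}u\cdot R_{k}$ with $R_{k}=\int_{0}^{1}F'\big(S_{k}u+\tau\,\Delta_{k}u\big)\,d\tau$; for the power‑type nonlinearities of Lemma \ref{lemadeux} (where $|F'(w)|\sim|w|^{\alpha}$, resp. $|w|^{\gamma}$) the remainders obey the pointwise bound $|R_{k}|\lesssim M(F'(u))$, $M$ being the Hardy--Littlewood maximal operator, because the intermediate values $S_{k}u+\tau\Delta_{k}u$ are dominated in modulus by local averages of $|u|$. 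Then $\Delta_{j}F(u)=\sum_{k}\Delta_{j}(\Delta_{k}u\cdot R_{k})$, and I would split the sum into the near‑diagonal part $|k-j|\le C$ and an off‑diagonal remainder: the first is estimated pointwise by $M$‑functions and summed in $l^{2}$ using the $l^{2}$‑valued maximal inequality in $L^{(p,\infty)}$ (Lemma \ref{extensionM}); the off‑diagonal part is handled by exploiting the regularity gained when $\Delta_{j}$ falls on the product (equivalently, on the high‑frequency tail of $R_{k}$), which produces rapid decay in $|k-j|$ and hence $l^{2}$‑summability after the $2^{js}$ weighting. Combining these with the Lorentz Hölder inequality ($\frac1r=\frac1p+\frac1q$) and the boundedness of $M$ on $L^{(p,\infty)}$ and on $L^{(q,\infty)}$ — which follows from Theorem \ref{HL} and real interpolation exactly as in Lemma \ref{extensionM} — splits the bound into $\|F'(u)\|_{L^{(p,\infty)}}$ times $\|\Lambda^{s}u\|_{L^{(q,\infty)}}$, yielding \eqref{trois}.

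The main obstacle is not the passage to weak spaces, which is essentially cost‑free once Lemmas \ref{extension} and \ref{extensionM} and Remark \ref{equinorm} are in hand, but the genuinely nonlinear core of the chain rule: controlling the remainders $R_{k}$ and carrying out the frequency summation with the correct $2^{ks}$ weights, exactly as in the classical Lebesgue‑space proof. An alternative, if one prefers to avoid the telescoping, is to start from the representation $\Lambda^{s}g(x)=c_{n,s}\int(g(x)-g(x-y))|y|^{-n-s}\,dy$ (valid for $0<s<1$), combine $F(u(x))-F(u(x-y))=(u(x)-u(x-y))\int_{0}^{1}F'(\cdots)\,d\tau$ with the pointwise bound $|u(x)-u(x-y)|\lesssim|y|^{s}\big(M(\Lambda^{s}u)(x)+M(\Lambda^{s}u)(x-y)\big)$ — itself obtained by writing $u=I_{s}\ast(\Lambda^{s}u)$ and splitting the Riesz kernel — and then close with the Lorentz Hölder inequality and the boundedness of $M$; here the delicate point is to organize the near‑ and far‑parts so as to avoid the non‑integrable $|y|^{-n}$ singularity that a crude absolute‑value estimate would create.
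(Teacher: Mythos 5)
Your first route reaches \eqref{trois} with the same Lorentz-space toolbox the paper uses (the square-function equivalence of Lemma \ref{extension}/Remark \ref{equinorm}, the vector-valued maximal inequality of Lemma \ref{extensionM}, Lorentz--H\"older and Theorem \ref{HL}), but the nonlinear decomposition is genuinely different. The paper follows \cite{ChW} directly: it writes $\Delta_jF(u)(x)=\int\big[\int_0^1F'\big(tu(y)+(1-t)u(x)\big)dt\big](u(y)-u(x))\psi_j(x-y)\,dy$, pulls out a factor $M(F'(u))(x)$, and Littlewood--Paley decomposes only $u$, splitting the $k$-sum into $k<j$ (where a gain $2^{k-j}$ comes from the smoothness of $\widetilde{\Delta}_k\Delta_k u$ against the kernel $\psi_j$) and $k\geq j$ (bounded by $M^2\Delta_k u$); after setting $j=k-m$ and applying Minkowski one gets the summable factor $\sum_m 2^{-\epsilon|m|}$, $\epsilon=2\min(s,1-s)$, and then closes with exactly the three tools you list. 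You instead telescope $F(u)=\sum_k\big(F(S_{k+1}u)-F(S_ku)\big)$, which is the other standard (paraproduct-flavored) proof; so no smoothed functions ever enter the paper's argument, whereas they are the heart of yours.

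That difference is where your gap sits: your bound $|R_k|\lesssim M(F'(u))$ requires $F'$ to be of power type (or at least $|F'(\tau a+(1-\tau)b)|\lesssim|F'(a)|+|F'(b)|$), since for a general $C^1$ function one cannot control $F'$ evaluated at the mollified functions $S_ku+\tau\Delta_ku$ by the maximal function of $F'(u)$ --- a restriction you flag yourself. As written, your argument therefore proves the lemma only for power-type nonlinearities, not for arbitrary $F\in C^1$ as stated. In practice this is harmless (the lemma is only applied with $F(x)=|x|^{\alpha}x$ and $|x|^{\gamma}$ in Lemma \ref{lemadeux} and in the $J_2$, $J_4$ estimates, and the paper's own step $\big|\int_0^1F'(tu(y)+(1-t)u(x))dt\big|\leq 2M(F'(u))(x)$ tacitly uses similar structural control of $F'$ along segments), but to match the stated generality you should either add such a hypothesis on $F'$ explicitly or adopt the paper's decomposition, in which $F'$ is evaluated at convex combinations of actual values of $u$. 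Your alternative difference-quotient sketch is a third viable route, not the paper's, and its near/far splitting is where all the remaining work would lie.
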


\begin{proof}
Since the ideas are the same as those in \cite{ChW} we will only outline the proof making the necessary adaptations to Lorentz spaces.

At first we observe that
$$\Delta_jF(u)(x)=\int_{\R^n}\left[\int_0^1 F'\Big(tu(y)+(1-t)u(x)\Big)dt\right]\big(u(y)-u(x)\big)\psi_j(x-y)dy,$$
where $\Delta_j$ and $\psi_j$ were defined in \eqref{defdelta}.
Using that
$$
\left|\int_0^1 F'\Big(tu(y)+(1-t)u(x)\Big)dt\right|\leq 2M(F'(u))(x)
$$
and decomposing $u=\sum_k \widetilde{\Delta}_k\Delta_k u$, we obtain
\begin{equation}\label{trois1}
|\Delta_jF(u)(x)|\leq C M(F'(u))(x)\cdot \sum_{k=-\infty}^\infty \int_{\R^n}| \widetilde{\Delta}_k\Delta_k u(y)- \widetilde{\Delta}_k\Delta_k u(x)||\psi_j(x-y)|dy,
\end{equation}
where $M$ is the Hardy-Littlewood maximal function. Now we break the sum over $k$ into the cases $k<j$ and $k\geq j$. By using the properties of $\psi_j$ and $\Delta_j$, we see that
\begin{equation}\label{trois2}
 \sum_{k<j} \int_{\R^n}| \widetilde{\Delta}_k\Delta_k u(y)- \widetilde{\Delta}_k\Delta_k u(x)||\psi_j(x-y)|dy\leq C\sum_{k<j} 2^{k-j}{M}^2 \Delta_k u(x),
 \end{equation}
where ${M}^2(\Delta_k u(x))=M\circ M(\Delta_k u(x)).$ In a similar fashion,
\begin{equation}\label{trois3}
\sum_{k\geq j} \int_{\R^n}| \widetilde{\Delta}_k\Delta_k u(y)- \widetilde{\Delta}_k\Delta_k u(x)||\psi_j(x-y)|dy\leq C \sum_{k\geq j}{M}^2 \Delta_k u(x).
\end{equation}
Inserting \eqref{trois2} and \eqref{trois3} into \eqref{trois1}, substituting $j=k-m$ and applying Minkowski's inequality, we have
\begin{equation*}
\big(\sum_{j=-\infty}^\infty 2^{2js}|\Delta_jF(u)(x)|^2\big)^{1/2}\leq C MF'(u)(x)\sum_{m=-\infty}^\infty
2^{-\epsilon |m|}\big(\sum_{k=-\infty}^\infty 2^{2ks}|{M}^2\Delta_ku(x)|^2\big)^{1/2},
\end{equation*}
where $\epsilon =2 \min(s,1-s)>0.$

In view of Remark \ref{equinorm},  Holder's inequality and Lemma \ref{extensionM}, we then deduce
\[
\begin{split}
\|\Lambda^sF(u)\|_{L^{(r, \infty)}}& \leq C
\|MF'(u)\|_{L^{(p, \infty)}}
\|\big(\sum_{k=-\infty}^\infty 2^{2ks}|{M}^2\Delta_ku|^2\big)^{1/2}\|_{L^{(q, \infty)}}\\
&= C\|MF'(u)\|_{L^{(p, \infty)}}\|\overrightarrow{M}^2\{2^{ks}\Delta_j u\}\|_{L^{(q,\infty)}(l^2)}\\
&\leq  C\|MF'(u)\|_{L^{(p, \infty)}}\|\{2^{ks}\Delta_j u\}\|_{L^{(q,\infty)}(l^2)}.
\end{split}
\]
The conclusion now follows as an application of Remark \ref{equinorm}.
\end{proof}

Next Lemma is similar to Lemma A.2 in \cite{K1}.

 \begin{lemma}\label{lemaquatre}
Suppose that $F\in C^1(\C,\C)$ satisfies $F(0)=0$ and $|F'(x)|\leq c|x|^{k-1},\ \  k\geq 1.$
 If $s\in [0,1]$, then
 \begin{equation}\label{key}
  \|\Lambda^s F(u)\|_{L^{(r, \infty)}}\leq c\|u\|^{k-1}_{L^{(p, \infty)}} \|\Lambda^s u\|_{L^{(q, \infty)}}
 \end{equation}
 where
 $1<p,q,r<\infty, \ \frac{1}{r}=\frac{k-1}{p}+\frac{1}{q}$  and the constant $c$ depends on $s,p,q,r.$
 \end{lemma}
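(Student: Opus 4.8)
The plan is to reduce the statement to the Leibniz-type estimate already established in Lemma \ref{lematrois}. The point is that Lemma \ref{lematrois} requires only $F\in C^1(\C,\C)$ and gives control of $\|\Lambda^s F(u)\|_{L^{(r,\infty)}}$ by $\|F'(u)\|_{L^{(p',\infty)}}\|\Lambda^s u\|_{L^{(q,\infty)}}$ whenever $\frac1r=\frac1{p'}+\frac1q$; here $s\in(0,1)$ is required, and the endpoint cases $s=0$ and $s=1$ must be handled separately. So the first step is: given $F$ with $F(0)=0$ and $|F'(x)|\le c|x|^{k-1}$, apply Lemma \ref{lematrois} with the choice $p'$ defined by $\frac1{p'}=\frac{k-1}{p}$, i.e. $p'=p/(k-1)$ (note $p'\in(1,\infty)$ is equivalent to $k-1<p$, which is forced by $\frac1r=\frac{k-1}{p}+\frac1q$ together with $r>1$, $q<\infty$). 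This yields
\begin{equation*}
\|\Lambda^s F(u)\|_{L^{(r,\infty)}}\le C\,\|F'(u)\|_{L^{(p/(k-1),\infty)}}\,\|\Lambda^s u\|_{L^{(q,\infty)}}.
\end{equation*}

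The second step is to bound the first factor on the right. From $|F'(x)|\le c|x|^{k-1}$ we get $|F'(u(x))|\le c|u(x)|^{k-1}$ pointwise, hence $\|F'(u)\|_{L^{(p/(k-1),\infty)}}\le c\,\big\||u|^{k-1}\big\|_{L^{(p/(k-1),\infty)}}=c\,\|u\|_{L^{(p,\infty)}}^{k-1}$, where the last identity is the elementary scaling property of the weak-$L^p$ quasi-norm under taking powers (or, equivalently, follows from the distribution-function definition: $\{|u|^{k-1}>\lambda\}=\{|u|>\lambda^{1/(k-1)}\}$). Combining the two steps gives \eqref{key} for $s\in(0,1)$.

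It remains to treat $s=0$ and $s=1$. For $s=0$, \eqref{key} reads $\|F(u)\|_{L^{(r,\infty)}}\le c\|u\|^{k-1}_{L^{(p,\infty)}}\|u\|_{L^{(q,\infty)}}$; since $F(0)=0$ and $|F'|\le c|x|^{k-1}$, the fundamental theorem of calculus gives $|F(u(x))|\le c|u(x)|^{k}$, and then Hölder's inequality in Lorentz spaces (stated in the excerpt) with $\frac1r=\frac{k-1}{p}+\frac1q$ closes the case. For $s=1$, one writes $\Lambda F(u)$ via the Riesz transforms and $\nabla F(u)=F'(u)\nabla u$ (the chain rule for $C^1$ composition, valid after the usual approximation argument), and uses boundedness of the Riesz transforms on $L^{(r,\infty)}$ (a consequence of Calderón–Zygmund theory together with the interpolation Theorem \ref{teoint}) followed by the Hölder inequality and the pointwise bound on $F'(u)$ exactly as above; alternatively one notes $s=1$ follows from the $s\in(0,1)$ case by a limiting argument, but the direct argument is cleaner.

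The main obstacle is bookkeeping rather than conceptual: one must check that the exponent $p'=p/(k-1)$ produced by the decomposition genuinely lies in $(1,\infty)$ so that Lemma \ref{lematrois} applies, and that the endpoint cases $s\in\{0,1\}$—which are not covered by Lemma \ref{lematrois}—are handled by the independent elementary arguments above. A secondary point to be careful about is that $F$ is only $C^1$, so the chain rule $\nabla F(u)=F'(u)\nabla u$ used in the $s=1$ case, and the representation of $\Delta_j F(u)$ used implicitly through Lemma \ref{lematrois}, should be justified by a standard density/approximation argument; this is routine and is exactly the point where the hypothesis $F\in C^1$ (rather than merely locally Lipschitz) is used.
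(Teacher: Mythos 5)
Your proposal is correct and follows essentially the same route as the paper's proof: reduce the case $0<s<1$ to Lemma \ref{lematrois} with the auxiliary exponent $\tilde p=p/(k-1)$ and bound $\|F'(u)\|_{L^{(\tilde p,\infty)}}$ by $c\|u\|^{k-1}_{L^{(p,\infty)}}$, then treat $s=0$ via $|F(u)|\le c|u|^{k}$ and H\"older, and $s=1$ via the chain rule $\partial F(u)=F'(u)\partial u$ together with the comparability of $\|\partial u\|$ and $\|\Lambda u\|$ (which you justify, slightly more explicitly than the paper, through Riesz transforms and interpolation). The only additional content in your write-up is the bookkeeping check $p/(k-1)\in(1,\infty)$, which the paper leaves implicit.
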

 \begin{proof}
 Let us start with the case $s=0.$ Since $F\in C^1(\C,\C)$ and $F(0)=0$ we have that $$\left|\frac{F(u)-F(0)}{u-0}\right|\leq c|u|^{k-1}.$$
 Now from Holder's inequality,
 $$
 \|F(u)\|_{L^{(r, \infty)}}\leq c\|u\|^{k-1}_{L^{(\tilde p(k-1), \infty)}} \|u\|_{L^{(q, \infty)}} \text{ where }\ \frac{1}{r}=\frac{1}{\tilde p}+\frac{1}{q}.
 $$
 Taking $\tilde p (k-1)=p$ we get the result.
 To solve the case $s=1$ we use the same idea together with the facts that $$\partial F(u)=F'(u) \partial u\  \text{ and } \  \|\partial u \|_{L^{(p, \infty)}}=\|\Lambda u \|_{L^{(p, \infty)}}.$$
Assume now $0<s<1.$ From  Lemma \ref{lematrois}, we get
 $$\|\Lambda^s F(u)\|_{L^{(r, \infty)}}\leq c\|F'(u)\|_{L^{(\tilde{p}, \infty)}} \|\Lambda^s u\|_{L^{(q, \infty)}} \text{ for } \frac{1}{r}=\frac{1}{\tilde{p}}+\frac{1}{q}.$$
 Using the hypothesis on $F$ and Holder's inequality, we obtain
 $$\|\Lambda^s F(u)\|_{L^{(r, \infty)}}\leq c\|u\|^{k-1}_{L^{(\tilde{p}({k-1}), \infty)}} \|\Lambda^s u\|_{L^{(q, \infty)}}.$$
By taking $\tilde p (k-1)=p$ we get the desired inequality.
 \end{proof}

With Lemma \ref{lemaquatre} in hand we are able to prove Lemma \ref{lemadeux}.

\begin{proof}[Proof of Lemma \ref{lemadeux}]
To prove \eqref{deux} we only need to choose $F(x)=|x|^\alpha x, \ r=\frac{\alpha+2}{\alpha+1},\ p=q=\alpha+2$,$\ k-1=\alpha$ and apply Lemma \ref{lemaquatre}.
To prove \eqref{deux1} we first note that, as above, an application of Lemma \ref{lematrois} gives,
$$
 \|\Lambda^s(|u|^\gamma u)\|_{L^{(\frac{\alpha+2}{\alpha+1}, \infty)}}\leq C \|u\|^\gamma_{L^{(\frac{\gamma(\alpha+2)}{\alpha},\infty)}}  \|\Lambda^s u\|_{L^{(\alpha+2, \infty)}}.
$$
An application of Remark \ref{remark1} establishes the desired inequality.
\end{proof}

\begin{remark}
In \cite{K1} it was proved (in the $L^p$ level) that \eqref{key} also holds if $F\in C^m(\mathbb{C}, \mathbb{C})$ satisfies $|D^iF(x)|\leq |x|^{k-i},$ $i=1,\ldots,m$ for some $k\geq m$, and $s\in[0,m]$. The proof relies on the Gagliardo-Nirenberg type inequality
\begin{equation}\label{gnlp}
\|\Lambda^sf\|_{L^p}\leq C\|\Lambda^{s_0}f\|_{L^{p_0}}^{1-\theta}C\|\Lambda^{s_1}f\|_{L^{p_1}}^\theta,
\end{equation}
where $\theta\in(0,1)$, $s=(1-\theta)s_0+\theta s_1$, and $\frac{1}{p}=\frac{1-\theta}{p_0}+\frac{\theta}{p_1}$. Here, since we do not know if \eqref{gnlp} holds in the $L^{(p,\infty)}$ level, we are unable to prove a similar result. The drawback is that we will not reach all ranges of $\alpha$ and $\gamma$ as in \cite{W}.
\end{remark}

Finally, next lemma establishes the boundedness of the linear group $U(t)$ in the weak
Lebesgue spaces.

\begin{lemma}\label{lema2.1}
	Let  $1<p<2$. If $p'$ is such that $\frac{1}{p}+\frac{1}{p'}=1$, then there
	exists a constant $C=C(n,p)>0$ such that
	\begin{equation}\label{a1}
	\|U(t)\phi\|_{L^{(p', \infty)}}\leq C t^{-\frac{n}{d}(\frac{2}{p}-1)}\|\phi\|_{L^{(p, \infty)}},
	\end{equation}
	for all $\phi \in L^{(p,\infty)}(\re^n)$ and $t>0$.
\end{lemma}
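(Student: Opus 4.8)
The plan is to obtain the decay estimate \eqref{a1} by real interpolation from the two endpoint cases, exactly as one proves the classical $L^{p'}$--$L^p$ dispersive estimate and then upgrades it to Lorentz spaces. First I would recall the two standard bounds for the linear group associated with \textbf{(H1)}--\textbf{(H2)}: the $L^2$ conservation law $\|U(t)\phi\|_{L^2}=\|\phi\|_{L^2}$, which follows from $q$ being real-valued (so the Fourier multiplier $e^{itq(\xi)}$ has modulus one); and the $L^1\to L^\infty$ dispersive estimate $\|U(t)\phi\|_{L^\infty}\leq C t^{-n/d}\|\phi\|_{L^1}$, which is where hypotheses \textbf{(H1)} and \textbf{(H2)} enter. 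Indeed, writing $U(t)\phi = K_t * \phi$ with $K_t(x)=\int_{\R^n} e^{i(x\xi+tq(\xi))}\,d\xi$, the homogeneity $q(\lambda\xi)=\lambda^d q(\xi)$ of \textbf{(H1)} lets me rescale $\xi = t^{-1/d}\zeta$ to get $K_t(x) = t^{-n/d} G(xt^{-1/d})$, and then \textbf{(H2)} gives $\|K_t\|_{L^\infty} = t^{-n/d}\|G\|_{L^\infty} \leq C t^{-n/d}$; Young's inequality then yields the $L^1\to L^\infty$ bound.

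Next I would fix $t>0$ and regard $U(t)$ as a linear operator that is simultaneously bounded from $L^1$ to $L^\infty$ with norm $\leq C t^{-n/d}$ and from $L^2$ to $L^2$ with norm $\leq 1$. I would first interpolate these (classical Riesz--Thorin, or the Lorentz-space interpolation Theorem \ref{teoint}) to land at the ordinary Lebesgue estimate $\|U(t)\phi\|_{L^{p'}}\leq C t^{-\frac{n}{d}(\frac{2}{p}-1)}\|\phi\|_{L^{p}}$ for $1<p<2$. Then, to reach the weak-Lebesgue version, I would pick two exponents $p_0,p_1$ with $1<p_0<p<p_1<2$, use the already-established strong bounds $\|U(t)\phi\|_{L^{p_k'}}\leq C t^{-\frac{n}{d}(\frac{2}{p_k}-1)}\|\phi\|_{L^{p_k}}$, and apply the real interpolation functor $(\cdot,\cdot)_{\theta,\infty}$ with $\theta$ chosen so that $\frac1p = \frac{1-\theta}{p_0}+\frac{\theta}{p_1}$ (equivalently $\frac{1}{p'} = \frac{1-\theta}{p_0'}+\frac{\theta}{p_1'}$, since $p\mapsto p'$ is the reflection of $\frac1p$ across $\frac12$ and interpolation exponents are computed harmonically). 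By Theorem \ref{teoint}, $(L^{p_0},L^{p_1})_{\theta,\infty}=L^{(p,\infty)}$ and $(L^{p_0'},L^{p_1'})_{\theta,\infty}=L^{(p',\infty)}$, and the interpolated operator norm is the geometric mean $\big(C t^{-\frac{n}{d}(\frac{2}{p_0}-1)}\big)^{1-\theta}\big(C t^{-\frac{n}{d}(\frac{2}{p_1}-1)}\big)^{\theta}$. The exponents of $t$ combine linearly, and since $\frac{2}{p}-1 = (1-\theta)(\frac{2}{p_0}-1)+\theta(\frac{2}{p_1}-1)$, this geometric mean equals $C\,t^{-\frac{n}{d}(\frac{2}{p}-1)}$, which is exactly \eqref{a1}.

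I expect the only genuinely delicate point to be the bookkeeping with conjugate exponents under real interpolation, namely verifying that the same $\theta$ simultaneously produces $L^{(p,\infty)}$ on the source side and $L^{(p',\infty)}$ on the target side, together with the correct power of $t$; this is a short computation once one notes that $\frac{1}{p_k'} = 1-\frac{1}{p_k}$ makes $\frac{1}{p'}$ an affine image of the $\frac{1}{p_k}$'s with the same weights. Everything else is routine: the $L^1\to L^\infty$ kernel bound is immediate from \textbf{(H1)}--\textbf{(H2)} as indicated, the $L^2$ isometry is standard, and the passage from strong to weak Lebesgue estimates is precisely the kind of real-interpolation argument already used repeatedly in Section \ref{preliminaries} (e.g. in Lemmas \ref{extension} and \ref{extensionM}). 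One could alternatively phrase the whole thing in a single interpolation step between the $L^1\to L^\infty$ and $L^2\to L^2$ endpoints using the off-diagonal real interpolation theorem, but splitting it into the Lebesgue estimate followed by a diagonal interpolation to Lorentz spaces keeps the constants transparent.
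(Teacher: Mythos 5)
Your argument is correct, and it is essentially the proof the paper delegates to the references: the paper does not prove Lemma \ref{lema2.1} itself but cites \cite{BP} (see also \cite{SFR}), where the estimate is obtained exactly as you do it --- the scaling $K_t(x)=t^{-n/d}G(xt^{-1/d})$ from \textbf{(H1)}--\textbf{(H2)} giving the $L^1\to L^\infty$ bound, Plancherel for $L^2$, Riesz--Thorin for the Lebesgue estimate, and then off-diagonal real interpolation $(\cdot,\cdot)_{\theta,\infty}$ to pass to the weak-$L^p$ version. Your conjugate-exponent bookkeeping is right (only note that Theorem \ref{teoint} is stated with the exponents in increasing order, so on the target side one relabels $p_1'<p_0'$ and replaces $\theta$ by $1-\theta$, which changes nothing).
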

\begin{proof}
	We refer the reader to \cite{BP} (see also \cite{SFR}) for a proof of this lemma.
\end{proof}

\section{Main Results}\label{mainsection}

In this section we will  state and prove our main results. We follow the ideas in \cite{W} where the author proves existence of global solutions for small initial data with respect to a norm which is related to the structure of the  two-power nonlinear Shcr\"odinger equation. Our results extend the ones in \cite{W} since weak-$L^p$ spaces contain Lebesgue's spaces. At first let us define the function spaces where the solutions will be obtained.

\begin{definition}\label{def1}
 Given positive numbers $s,\beta, \delta$ and $M$, let $X_M=X_M(s,\beta, \delta)$ be the set of Bochner-measurable functions $u:(0,+\infty) \rightarrow H^s_{\alpha+2,\infty}$  such that

\begin{equation} \label{d3}
\|u\|_{\beta}:=\sup_{t>0}t^{\beta}
\|u(t)\|_{L^{(\alpha+2,\infty)}}\leq M,
\end{equation}
and
\begin{equation} \label{d4}
\|u\|_{\delta,s}:=\sup_{t>0}t^{\delta}
\|\Lambda^su(t)\|_{L^{(\alpha+2,\infty)}}\leq M.
\end{equation}

\end{definition}

It is not difficult to see  that $(X_M,d)$ is a nonempty complete metric space endowed with the distance
$$
d(u,v):=\sup_{t>0}t^{\beta} \|u(t)-v(t)\|_{L^{(\alpha+2,\infty)}}.
$$

For $\rho>0,$ we also define the initial data class $\mathcal{I}_{\rho
}\mathcal{=I}_{\rho}(s,\beta,\delta)$ as follows
\begin{equation}
\mathcal{I}_{\rho}=\{u_{0}\in\mathcal{S}^{\prime};U(t)u_{0}\in X_{\rho
}(s,\beta,\delta)\}.\label{initial data-1}%
\end{equation}

Our first result reads as follows.

\begin{theorem}[Global Existence and Uniqueness]\label{globaltheorem1}
Assume $0<max\{1, \alpha\}<\gamma$. Define
\begin{equation}\label{s}
s=\frac{n(\gamma-\alpha)}{\gamma(\alpha +2)}
\end{equation}
and suppose that $0<s<1$ and
\begin{equation}\label{alpha}
\frac{\alpha+2}{\alpha+1}< \frac{n\alpha}{d}< \alpha+2.
\end{equation}

Consider the positive numbers $\beta$ and $\delta$ defined by
\begin{equation} \label{betadelta}
\beta=\frac{1}{\alpha}-\frac{n}{d(\alpha +2)} \ \text{  and  }\ \
\delta=\frac{1}{\gamma}+\frac{s}{d}-\frac{n}{d(\alpha +2)}.
\end{equation}
Let $\rho>0$ and $M>0$ be such that
\begin{equation}\label{a}
\rho +|a|C_1M^{\alpha+1}+|b|C_2M^{\gamma+1}\leq M
\end{equation}
and
\begin{equation}\label{a_1}
\rho +|a|C_3M^{\alpha+1}+|b|C_4M^{\gamma+1}\leq M
\end{equation}
for some positive constants $C_1,C_2,C_3,C_4$ given in the calculations below, and assume that
\begin{align}\label{b}
|a|C_1M^{\alpha}+|b|C_2M^{\gamma}<1.
\end{align}

If $u_0\in\mathcal{I}_{\rho}$, then there exists a unique global solution of \eqref{eqintds}, say, $u \in X_M(s,\beta, \delta).$
\end{theorem}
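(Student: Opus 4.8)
The plan is to run a standard fixed-point (Banach contraction) argument for the map
$$
\Phi(u)(t) = U(t)u_0 + B(u)(t),\qquad B(u)(t)=i\int_0^t U(t-s)\bigl(a|u|^\alpha u + bE(|u|^\gamma)u\bigr)(s)\,ds,
$$
on the complete metric space $(X_M,d)$ of Definition \ref{def1}. The key is to show that $\Phi$ maps $X_M$ into itself and is a contraction, provided $\rho$, $M$ satisfy \eqref{a}, \eqref{a_1}, \eqref{b}. First I would record that, since $u_0\in\mathcal I_\rho$, the linear part $U(t)u_0$ already lies in $X_\rho$, contributing $\rho$ to both norms $\|\cdot\|_\beta$ and $\|\cdot\|_{\delta,s}$. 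Then the whole argument reduces to estimating $B(u)$ in the two norms defining $X_M$, and estimating $B(u)-B(v)$ in the $d$-metric.

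For the $\|\cdot\|_\beta$ estimate I would, for fixed $t>0$, apply the dispersive bound of Lemma \ref{lema2.1} inside the Duhamel integral: with $p$ chosen so that $p'=\alpha+2$ (so $\tfrac1p = \tfrac{\alpha+1}{\alpha+2}$), one gets
$$
\|B(u)(t)\|_{L^{(\alpha+2,\infty)}} \le C\int_0^t (t-s)^{-\frac nd(\frac 2p-1)}\bigl\| \,a|u|^\alpha u + bE(|u|^\gamma)u\,\bigr\|_{L^{(\frac{\alpha+2}{\alpha+1},\infty)}}(s)\,ds.
$$
The nonlinear terms are handled by Hölder in Lorentz spaces: $\||u|^\alpha u\|_{L^{(\frac{\alpha+2}{\alpha+1},\infty)}}\le \|u\|_{L^{(\alpha+2,\infty)}}^{\alpha+1}$, and for the non-local term one uses \textbf{(H3)} (boundedness of $E$ on $L^{(p,\infty)}$) followed by Hölder together with Remark \ref{remark1} to bound $\|u\|_{L^{(\gamma(\alpha+2)/\alpha,\infty)}}$ by $\|\Lambda^s u\|_{L^{(\alpha+2,\infty)}}$. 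Inserting the $X_M$ bounds $\|u(s)\|_{L^{(\alpha+2,\infty)}}\le Ms^{-\beta}$ and $\|\Lambda^s u(s)\|_{L^{(\alpha+2,\infty)}}\le Ms^{-\delta}$ produces an integral $\int_0^t (t-s)^{-\frac nd(\frac2p-1)} s^{-(\alpha+1)\beta}\,ds$ (resp. with exponent $\alpha\beta+\delta$ for the second term, or a mixed combination), which is a Beta-function integral; the exponents $\beta,\delta$ in \eqref{betadelta} are precisely rigged so that this integral equals $C t^{-\beta}$, giving $t^\beta\|B(u)(t)\|_{L^{(\alpha+2,\infty)}}\le |a|C_1 M^{\alpha+1}+|b|C_2 M^{\gamma+1}$. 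One must check the integrability of $(t-s)^{-\frac nd(\frac2p-1)}$ near $s=t$ and of $s^{-(\text{power})}$ near $s=0$; this is exactly where the hypotheses $0<s<1$ and the two-sided bound \eqref{alpha} on $n\alpha/d$, together with $\max\{1,\alpha\}<\gamma$, are used.

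The $\|\cdot\|_{\delta,s}$ estimate is the analogous computation after commuting $\Lambda^s$ through the Duhamel formula (legitimate because $U(t)$ commutes with $\Lambda^s$ and, by \textbf{(H3)}, so does $E$): one bounds $\|\Lambda^s B(u)(t)\|_{L^{(\alpha+2,\infty)}}$ by the same dispersive estimate applied to $\|\Lambda^s(a|u|^\alpha u+bE(|u|^\gamma)u)\|_{L^{(\frac{\alpha+2}{\alpha+1},\infty)}}$, which is controlled by Lemma \ref{lemadeux} (inequalities \eqref{deux}, \eqref{deux1}) — this is precisely the fractional Leibniz estimate the preliminaries were built for. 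Again the time integral is a Beta integral engineered by \eqref{betadelta} to yield $Ct^{-\delta}$, giving the contribution $|a|C_3M^{\alpha+1}+|b|C_4M^{\gamma+1}$. Combining with the linear part and invoking \eqref{a}, \eqref{a_1} shows $\Phi(X_M)\subset X_M$. For the contraction, I would estimate $B(u)-B(v)$ in the $d$-norm using the pointwise bounds $\bigl||u|^\alpha u-|v|^\alpha v\bigr|\le C(|u|^\alpha+|v|^\alpha)|u-v|$ and the analogous one for the $\gamma$-term, then Hölder and the dispersive estimate as above; this produces $d(\Phi u,\Phi v)\le \bigl(|a|C_1M^\alpha+|b|C_2M^\gamma\bigr)d(u,v)$, a contraction by \eqref{b}. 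The Banach fixed point theorem then yields the unique $u\in X_M$ solving \eqref{eqintds}.

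The main obstacle I anticipate is bookkeeping the admissibility of all the Lorentz-Hölder exponents simultaneously while keeping the time-integral exponents in the convergent range: one needs a single dispersive exponent $\frac nd(\frac 2p-1)$ (here $p=\frac{\alpha+2}{\alpha+1}$) that is $<1$ for local integrability at $s=t$, and the powers $(\alpha+1)\beta$, $\alpha\beta+\delta$, $(\gamma+1)\delta$ appearing at $s=0$ must all be $<1$ — these constraints are exactly \eqref{alpha} and $0<s<1$, and verifying that \eqref{betadelta} makes each Beta integral reproduce the correct power $t^{-\beta}$ or $t^{-\delta}$ (rather than merely a finite constant) is the one genuinely delicate computation; everything else is a routine application of the lemmas in Section \ref{preliminaries}.
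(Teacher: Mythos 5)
Your overall scheme is exactly the paper's: a Banach fixed-point argument on $X_M$, with the $\|\cdot\|_\beta$ and $\|\cdot\|_{\delta,s}$ bounds for $B(u)$ obtained from the dispersive estimate of Lemma \ref{lema2.1}, Lorentz--H\"older, Remark \ref{remark1}, and Beta-type time integrals whose exponents close up thanks to \eqref{betadelta}, \eqref{alpha} and $0<s<1$ (the paper's identities \eqref{alberel} and \eqref{gaderel}). However, your treatment of the nonlocal term has two genuine gaps. First, in the contraction estimate you invoke ``the analogous pointwise bound for the $\gamma$-term'': no such pointwise bound exists for $E(|u|^\gamma)u-E(|v|^\gamma)v$, because $E$ is nonlocal. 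The paper first writes the algebraic decomposition $E(|u|^{\gamma})u-E(|v|^{\gamma})v=E(|u|^{\gamma})(u-v)+E(|u|^{\gamma}-|v|^{\gamma})v$ (its \eqref{ineq-dif-1}), and only then applies \textbf{(H3)}, H\"older with the auxiliary exponents $z=\frac{(\alpha+2)\gamma}{\alpha}$, $l=\frac{(\alpha+2)\gamma}{(\alpha+1)(\gamma-1)+1}$, the pointwise inequality \eqref{wellknow} with $m=\gamma-1$, and Remark \ref{remark1}; without this step your $J_2$-type bound does not follow.

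Second, for the $\|\cdot\|_{\delta,s}$ estimate you claim that $\Lambda^s\bigl(E(|u|^\gamma)u\bigr)$ is ``controlled by Lemma \ref{lemadeux}'' because $E$ commutes with $\Lambda^s$. This does not work: \eqref{deux1} bounds $\Lambda^s(|u|^\gamma u)$, a composition, whereas $E(|u|^\gamma)u$ is a \emph{product} of two different functions, and commuting $\Lambda^s$ with $E$ does not let you pass $\Lambda^s$ through that product. The missing ingredient is a fractional Leibniz (Kato--Ponce) rule in the weak-$L^p$ scale, which the paper imports from \cite[Theorem 6.1]{CN} to split $\Vert\Lambda^{s}(E(|u|^{\gamma})u)\Vert_{L^{(\frac{\alpha+2}{\alpha+1},\infty)}}$ into $\Vert\Lambda^{s}E(|u|^{\gamma})\Vert_{L^{(l,\infty)}}\Vert u\Vert_{L^{(z,\infty)}}+\Vert E(|u|^{\gamma})\Vert_{L^{(\frac{\alpha+2}{\alpha},\infty)}}\Vert\Lambda^{s}u\Vert_{L^{(\alpha+2,\infty)}}$; only after this splitting do \textbf{(H3)} (to commute $E$ with $\Lambda^s$ on the first factor), Lemma \ref{lematrois} and Remark \ref{remark1} yield the bound $C\Vert\Lambda^{s}u\Vert_{L^{(\alpha+2,\infty)}}^{\gamma+1}$ needed for $J_4$. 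Your single-power estimates ($J_1$, $J_3$) and the closing of the time integrals are correct and match the paper, but as written the nonlocal contributions to both the self-mapping and the contraction are not justified.
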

\begin{proof}
The proof is based on the Banach fixed point theorem. We consider the
integral operator
\begin{equation} \label{Z1}
(\Phi u)(t)=U(t)u_0 + (Bu)(t),
\end{equation}
where $B$ is defined as in \eqref{defB1}.

Let $X_M=X_M(s,\beta, \delta)$ be the function space from Definition \ref{def1}.
 We will show  that $\Phi$ maps $X_M$ into itself and $\Phi:X_M\rightarrow
X_M$ is a contraction. To do that, we assume that $u, v \in X_M$ and estimate the integrals below:
\begin{align*}
J_{1}:= t^\beta |a|\int_0^t\|U(t-\tau)(|u|^\alpha u-|v|^\alpha v)(\tau)\|_{L^{(\alpha+2,\infty)}}d\tau,
\end{align*}
\begin{align*}
J_{2}:= t^\beta |b|\int_0^t\|U(t-\tau)[E(|u|^\gamma) u-E(|v|^\gamma) v](\tau)\|_{L^{(\alpha+2,\infty)}}d\tau,
\end{align*}
\begin{align*}
J_{3}:= t^\delta |a|\int_0^t\|\Lambda^s U(t-\tau)(|u|^\alpha u)(\tau)\|_{L^{(\alpha+2,\infty)}}d\tau,
\end{align*}
and
\begin{align*}
J_{4}:=  t^\delta |b|\int_0^t\|\Lambda^s U(t-\tau)[E(|u|^\gamma) u](\tau)\|_{L^{(\alpha+2,\infty)}}d\tau.
\end{align*}

For $m>0,$ recall the pointwise inequality
\begin{equation}
||u|^{m}u-|v|^{m}v|\leq C(|u|^{m}+|v|^{m})|u-v|.\text{
}\label{wellknow}%
\end{equation}
To estimate $J_{1},$ we use (\ref{wellknow}) with $m=\alpha$, Lemma
\ref{lema2.1} and Holder's inequality to get
\begin{align*}
J_{1}\leq &  |a|Ct^{\beta}\int_{0}^{t}(t-\tau)^{-\frac{n}{d}(\frac{2(\alpha
+1)}{\alpha+2}-1)}\Vert(|u|^{\alpha}u-|v|^{\alpha}v)(\tau)\Vert_{L^{(\frac
{\alpha+2}{\alpha+1},\infty)}}d\tau\\
\leq &  |a|Ct^{\beta}\int_{0}^{t}(t-\tau)^{-\frac{n}{d}\frac{\alpha}{\alpha
+2}}(\Vert u(\tau)\Vert_{L^{(\alpha+2,\infty)}}^{\alpha}+\Vert v(\tau
)\Vert_{L^{(\alpha+2,\infty)}}^{\alpha})\Vert u(\tau)-v(\tau)\Vert
_{L^{(\alpha+2,\infty)}}d\tau\\
\leq &  C|a|M^{\alpha}d(u,v)t^{1-\frac{n}{d}\frac{\alpha}{\alpha+2}%
-\alpha\beta}\int_{0}^{1}(1-s)^{-\frac{n}{d}\frac{\alpha}{\alpha+2}%
}s^{-(\alpha+1)\beta}ds\\
\leq &  |a|C_{1}M^{\alpha}d(u,v),
\end{align*}
where in the last inequality we used that $\beta(\alpha+1)<1$, $\frac{n\alpha
}{d(\alpha+2)}<1$, and
\begin{equation}
\beta\alpha+\frac{n\alpha}{d(\alpha+2)}=1.\label{alberel}%
\end{equation}

Now let us estimate $J_{2}$. Firstly, we write
\begin{equation}
E(|u|^{\gamma})u-E(|v|^{\gamma})v=E(|u|^{\gamma})(u-v)+E(|u|^{\gamma
}-|v|^{\gamma})v.\label{ineq-dif-1}%
\end{equation}
Let $z=\frac{\alpha+2}{\alpha}\gamma$ and $l=\frac{(\alpha+2)\gamma}%
{(\alpha+1)(\gamma-1)+1}.$ Observe that $z,l>1,$ $\frac{1}{l}=\frac{1}%
{\alpha+2}+\frac{\gamma-1}{z}$ and  $\frac{1}{l}+\frac{1}{z}=\frac{\alpha
+1}{\alpha+2}$.  Then,  using Holder's inequality, assumption \textbf{(H3)},
(\ref{ineq-dif-1}), (\ref{wellknow}) with $m=\gamma-1$ and afterwards
Remark \ref{remark1}, we obtain
\begin{align}
&\left\Vert E(|u|^{\gamma})u-E(|v|^{\gamma})v\right\Vert _{L^{(\frac{\alpha
+2}{\alpha+1},\infty)}}\nonumber\\
&  \leq\left\Vert E(|u|^{\gamma})(u-v)\right\Vert
_{L^{(\frac{\alpha+2}{\alpha+1},\infty)}}+\left\Vert E(|u|^{\gamma}-|v|^{\gamma
})v\right\Vert _{L^{(\frac{\alpha+2}{\alpha+1},\infty)}}\nonumber\\
&  \leq C\left\Vert E(|u|^{\gamma})\right\Vert _{L^{(\frac{\alpha+2}{\alpha
},\infty)}}\left\Vert u-v\right\Vert _{L^{(\alpha+2,\infty)}}+C\left\Vert
E(|u|^{\gamma}-|v|^{\gamma})\right\Vert _{L^{(l,\infty)}}\left\Vert v\right\Vert
_{L^{(z,\infty)}}\nonumber\\
&  \leq C\left\Vert |u|^{\gamma}\right\Vert _{L^{(\frac{\alpha+2}{\alpha
},\infty)}}\left\Vert u-v\right\Vert _{L^{(\alpha+2,\infty)}}+C\left\Vert
(|u|^{\gamma}-|v|^{\gamma})\right\Vert _{L^{(l,\infty)}}\left\Vert \Lambda
^{s}v\right\Vert _{L^{(\alpha+2,\infty)}}\nonumber\\
&  \leq C\left\Vert |u|^{\gamma}\right\Vert _{L^{(\frac{\alpha+2}{\alpha}%
,\infty)}}\left\Vert u-v\right\Vert _{L^{(\alpha+2,\infty)}}+C\left\Vert
|u|^{\gamma-1}+|v|^{\gamma-1}\right\Vert _{L^{(\frac{z}{\gamma-1},\infty
)}}\left\Vert u-v\right\Vert _{L^{(\alpha+2,\infty)}}\left\Vert \Lambda
^{s}v\right\Vert _{L^{(\alpha+2,\infty)}}\nonumber\\
&  \leq C\left\Vert u \right\Vert _{L^{(z,\infty)}}^{\gamma}\left\Vert
u-v\right\Vert _{L^{(\alpha+2,\infty)}}+C(\left\Vert u\right\Vert _{L^{(z,\infty
)}}^{\gamma-1}+\left\Vert v\right\Vert _{L^{(z,\infty)}}^{\gamma-1})\left\Vert
u-v\right\Vert _{L^{(\alpha+2,\infty)}}\left\Vert \Lambda^{s}v\right\Vert
_{L^{(\alpha+2,\infty)}}\nonumber\\
&  \leq C\left\Vert \Lambda^{s}u \right\Vert _{L^{(\alpha+2,\infty)}}^{\gamma
}\left\Vert u-v\right\Vert _{L^{(\alpha+2,\infty)}}\nonumber\\
&+C(\left\Vert \Lambda
^{s}u\right\Vert _{L^{(\alpha+2,\infty)}}^{\gamma-1}+\left\Vert \Lambda
^{s}v\right\Vert _{L^{(\alpha+2,\infty)}}^{\gamma-1})\left\Vert u-v\right\Vert_{L^{(\alpha+2,\infty)}}\left\Vert \Lambda^{s}v\right\Vert _{L^{(\alpha+2,\infty
)}}\nonumber\\
&\leq C(\Vert\Lambda^{s}u\Vert_{L^{(\alpha+2,\infty)}}^{\gamma
}+\Vert\Lambda^{s}v\Vert_{L^{(\alpha+2,\infty)}}^{\gamma})\Vert
u-v\Vert_{L^{(\alpha+2,\infty)}}.
\end{align}

From the last inequality and Lemma \ref{lema2.1}, we obtain that

\begin{align*}
J_{2} &  \leq|b|Ct^{\beta}\int_{0}^{t}(t-\tau)^{-\frac{n}{d}\frac{\alpha}%
{\alpha+2}}(\left\Vert E(|u|^{\gamma})u-E(|v|^{\gamma})v\right\Vert _{L^{(\frac{\alpha
+2}{\alpha+1},\infty)}})d\tau\\
 &\leq|b|Ct^{\beta}\int_{0}^{t}(t-\tau)^{-\frac{n}{d}\frac{\alpha}%
{\alpha+2}}(\Vert\Lambda^{s}u(\tau)\Vert_{L^{(\alpha+2,\infty)}}^{\gamma
}+\Vert\Lambda^{s}v(\tau)\Vert_{L^{(\alpha+2,\infty)}}^{\gamma})\Vert
u(\tau)-v(\tau)\Vert_{L^{(\alpha+2,\infty)}}d\tau\\
&  \leq|b|CM^{\gamma}d(u,v)t^{1-\frac{n}{d}\frac{\alpha}{\alpha+2}%
-\gamma\delta}\int_{0}^{1}(1-s)^{-\frac{n}{d}\frac{\alpha}{\alpha+2}%
}s^{-(\beta+\gamma\delta)}ds\\
&  \leq|b|C_{2}M^{\gamma}d(u,v),
\end{align*}
where in the last inequality we used that $\gamma\delta+\beta<1$,
$\frac{n\alpha}{d(\alpha+2)}<1$, and
\begin{equation}
\frac{n}{d}\frac{\alpha}{\alpha+2}+\gamma\delta=1.\label{gaderel}
\end{equation}

For $J_{3}$, we use Lemmas \ref{lema2.1} and \ref{lemadeux} to
obtain
\begin{align*}
J_{3}\leq &  |a|Ct^{\delta}\int_{0}^{t}(t-\tau)^{-\frac{n}{d}\frac{\alpha
}{\alpha+2}}\Vert\Lambda^{s}(|u(\tau)|^{\alpha}u(\tau))\Vert_{L^{(\frac
{\alpha+2}{\alpha+1},\infty)}}d\tau\\
\leq &  |a|Ct^{\delta}\int_{0}^{t}(t-\tau)^{-\frac{n}{d}\frac{\alpha}%
{\alpha+2}}\Vert u(\tau)\Vert_{L^{(\alpha+2,\infty)}}^{\alpha}\Vert\Lambda
^{s}u(\tau))\Vert_{L^{(\alpha+2,\infty)}}d\tau
\end{align*}

In comparison with $J_{3}$, the handling of $J_{4}$ requires some care due to the
presence of the nonlocal operator $E$ and the fractional derivative
$\Lambda^{s}.$ For that, we recall the parameters $z=\frac{\alpha+2}{\alpha
}\gamma$ and $l=\frac{(\alpha+2)\gamma}{(\alpha+1)(\gamma-1)+1}$ and apply
Leibniz's rule in the setting of weak-$L^{p}$ (see \cite[Theorem 6.1]{CN}) to estimate%
\begin{equation}
\Vert\Lambda^{s}(E(|u|^{\gamma})u\Vert_{L^{(\frac{\alpha+2}{\alpha+1},\infty
)}}\leq\Vert\Lambda^{s}(E(|u|^{\gamma})\Vert_{L^{(l,\infty)}}\Vert
u\Vert_{L^{(z,\infty)}}+\Vert E(|u|^{\gamma})\Vert_{L^{(\frac{\alpha+2}%
{\alpha},\infty)}}\Vert\Lambda^{s}u\Vert_{L^{(\alpha+2,\infty)}}%
\label{ineq-IV-parcel-1}%
\end{equation}
Next, let $l_{1}=\frac{z}{\gamma-1}$ and note that $\frac{1}{l}=\frac{1}%
{l_{1}}+\frac{1}{\alpha+2}.$ Using \textbf{(H3)}, Lemma \ref{lematrois} and
then Remark \ref{remark1} in the R.H.S. of (\ref{ineq-IV-parcel-1}), we
obtain that
\begin{align}
\Vert\Lambda^{s}(E(|u|^{\gamma})u\Vert_{L^{(\frac{\alpha+2}{\alpha+1},\infty
)}}  & \leq C\Vert\Lambda^{s}(|u|^{\gamma})\Vert_{L^{(l,\infty)}}\Vert
u\Vert_{L^{(z,\infty)}}+C\Vert|u|^{\gamma}\Vert_{L^{(\frac{\alpha+2}{\alpha
},\infty)}}\Vert\Lambda^{s}u\Vert_{L^{(\alpha+2,\infty)}}\nonumber\\
& \leq C\Vert|u|^{\gamma-1}\Vert_{L^{(l_{1},\infty)}}\Vert\Lambda^{s}%
u\Vert_{L^{(\alpha+2,\infty)}}\Vert u\Vert_{L^{(z,\infty)}}+C\Vert
u\Vert_{L^{(z,\infty)}}^{\gamma}\Vert\Lambda^{s}u\Vert_{L^{(\alpha+2,\infty)}%
}\nonumber\\
& \leq C\Vert u\Vert_{L^{(z,\infty)}}^{\gamma-1}\Vert\Lambda^{s}%
u\Vert_{L^{(\alpha+2,\infty)}}\Vert u\Vert_{L^{(z,\infty)}}+C\Vert
u\Vert_{L^{(z,\infty)}}^{\gamma}\Vert\Lambda^{s}u\Vert_{L^{(\alpha+2,\infty)}%
}\nonumber\\
& \leq C\Vert\Lambda^{s}u\Vert_{L^{(\alpha+2,\infty)}}^{\gamma+1}%
\label{ineq-IV-parcel-2}%
\end{align}
Finally, Lemma \ref{lema2.1} and estimate \eqref{ineq-IV-parcel-2} yield%
\begin{align*}
J_{4}\leq &  |b|Ct^{\delta}\int_{0}^{t}(t-\tau)^{-\frac{n}{d}\frac{\alpha}%
{\alpha+2}}\Vert\Lambda^{s}(E(|u(\tau)|^{\gamma})u(\tau))\Vert_{L^{(\frac
{\alpha+2}{\alpha+1},\infty)}}d\tau\\
\leq &  |b|Ct^{\delta}\int_{0}^{t}(t-\tau)^{-\frac{n}{d}\frac{\alpha}%
{\alpha+2}}\Vert\Lambda^{s}u(\tau))\Vert_{L^{(\alpha+2,\infty)}}^{\gamma
+1}d\tau.
\end{align*}

Following the same ideas as before,  using \eqref{alberel}, \eqref{gaderel},  and the facts that
$ \beta\alpha+\delta<1,\ \delta(\gamma+1)<1$,
we have
\begin{align*}
 J_{3}+J_{4}\leq |a|C_3 M^{\alpha+1}+|b|C_4 M^{\gamma+1}.
\end{align*}
With the above estimates in hand we are able to prove existence of global solutions to \eqref{eqintds}. Indeed, suppose that $u_0\in\mathcal{I}_\rho$. The four estimates above combined with assumptions \eqref{a} and \eqref{a_1}  promptly allow us to show that $\Phi$ acts from $X_M$ to $X_M$. In addition, the estimates for $J_{1}$ and $J_{2}$ together with \eqref{b} implies the existence of a positive constant $K_0<1$ such that
\begin{align*}
d(\Phi u,\Phi v)\leq K_0 d(u,v),
\end{align*}
for any $u,v\in X_M$. The Banach fixed point theorem then gives us the desired result.
\end{proof}

\begin{remark}\label{rem1}
	Our results also holds true for NLS-like equations with nonlinearity having two powers and double nonlocal operators $E_1$ and $E_2$ satisfying \textbf{(H3)}. More precisely, in \eqref{NLS} we can consider the nonlinearity $a E_{1}(|u|^{\alpha}) u +bE_{2}(|u|^\gamma) u$. To do so, it is sufficient to estimate the terms $J_{1}$ and $J_{3}$ with the new term $E_{1}$ (see page 12) by following the arguments used to handle $J_{2}$ and $J_{4}$.
\end{remark}

\begin{remark} Here we give a sufficient condition to initial data satisfying the assumptions in Theorem \ref{globaltheorem1}. Let $\varphi \in \dot{H}^\sigma_{q,\infty}\cap \dot{H}^\tau_{r,\infty}$, where
$$
\sigma=-\frac{\beta d}{2}+\frac{n \alpha}{2(\alpha+2)},\quad \frac{1}{q}=\frac{\beta d}{2n}+\frac{1}{2},\quad \tau=s-\frac{\delta d}{2}+\frac{n\alpha}{2(\alpha+2)},\quad \frac{1}{r}=\frac{\delta d}{2n}+\frac{1}{2},
$$
with $\beta,\delta,\alpha,\gamma,s$ satisfying the assumptions in Theorem \ref{globaltheorem1}. Noting that

$$
\sigma=n\left(\frac{1}{q'}-\frac{1}{\alpha+2}\right) \qquad \mbox{and} \qquad \tau-s=n\left(\frac{1}{r'}-\frac{1}{\alpha+2}\right),
$$
and using Lemmas \ref{lemaun} and \ref{lema2.1}, we estimate
\begin{align*}
\|U(t)\varphi\|_{L^{(\alpha+2,\infty)}}=\|\Lambda^{-\sigma} U(t)\Lambda^{\sigma}\varphi\|_{L^{(\alpha+2,\infty)}}\leq C \|U(t)\Lambda^{\sigma}\varphi\|_{L^{(q',\infty)}}\leq C|t|^{-\beta}\|\Lambda^{\sigma}\varphi\|_{L^{(q,\infty)}}.
\end{align*}
Analogously,
\begin{align*}
\|\Lambda^sU(t)\varphi\|_{L^{(\alpha+2,\infty)}}=\|\Lambda^{s-\tau} U(t)\Lambda^{\tau}\varphi\|_{L^{(\alpha+2,\infty)}}\leq C \|U(t)\Lambda^{\tau}\varphi\|_{L^{(r',\infty)}}\leq  C|t|^{-\delta}\|\Lambda^{\tau}\varphi\|_{L^{(r,\infty)}}.
\end{align*}

It follows that $U(t)\varphi \in X_\rho$ for some $\rho>0$. In particular, the assumptions in Theorem \ref{globaltheorem1} hold provided that $u_0 \in \dot{H}^\sigma_{q,\infty}\cap \dot{H}^\tau_{r,\infty}$ and $\rho$ and $M$ are small enough.

\end{remark}

\begin{remark}[Well-posedness]\label{rem2}
	Under the assumptions of Theorem \ref{globaltheorem1}, if $u_0$ and $v_0$ are two tempered distributions in $X_\rho$ and if $u$ and $v$ are the corresponding solutions then we easily see that there exists a constant $C>0$ such that
	$$
	d(u,v)\leq Cd(U(t)u_0,U(t)v_0),
	$$
which shows the continuous dependence of solutions with respect to initial data. Thus, we have in fact obtained a well-posedness result in the sense of Hadamard in our setting.
\end{remark}

\begin{remark}
A few words of explanation concerning our assumptions in Theorem \ref{globaltheorem1} are in order. For $r>0$, let $\{r\}$ denotes the smallest integer bigger than or equal to $r$.  Instead of assuming $0<s<1$, in \cite{W} it was assumed
\begin{equation}\label{srel}
\{s\}<\alpha+1.
\end{equation}
In particular, when $s<1$, \eqref{srel} is equivalent to $\alpha>0$, which brings no additional assumption on $\alpha$. Hence our assumption is more restrictive than the one in \cite{W}. On the other hand, the assumption $s<1$ is equivalent to
\begin{equation}\label{srel1}
\alpha>\frac{(n-2)\gamma}{\gamma+n}.
\end{equation}
It is clear that if $n=1$ or $n=2$, \eqref{srel1} is always true. So, our assumption makes sense only in dimension $n\geq3$ in which case we are indeed assuming that the relation between $\alpha$ and $\gamma$ satisfies
$$
\frac{(n-2)\gamma}{\gamma+n}<\alpha<\gamma.
$$
\end{remark}

In the sequel we will study some properties of the global solution obtained in Theorem \ref{globaltheorem1}. The first one concerns scattering in weak Lebesgue spaces.

\begin{theorem}[Scattering]\label{scatering}
	Suppose that the assumptions in Theorem \ref{globaltheorem1} hold and
let $u$ be the corresponding global solution with initial data $u_0$.
Then, there exists $u_+\in \mathcal{I}_{\tilde\rho}$, for some $\tilde\rho>0$,  such that

\begin{equation}\label{a10.2}
\|u(t)-U(t)u_+\|_{L^{(\alpha+2,\infty)}}\leq
Ct^{-\beta}\Big(\|u\|_\beta^{\alpha+1}+\|u\|^\gamma_{\delta,s}\|u\|_\beta\Big), \quad t>0.
\end{equation}
and
\begin{equation}\label{a10.22}
\|\Lambda^s\left(u(t)-U(t)u_+\right)\|_{L^{(\alpha+2,\infty)}}\leq
Ct^{-\delta}\Big(\|u\|_\beta^{\alpha}\|u\|_{\delta,s}+\|u\|^{\gamma+1}_{\delta,s}\Big), \quad t>0.
\end{equation}
In particular
$$
\lim_{t\rightarrow\infty}\Big(\|u(t)-U(t)u_+\|_{L^{(\alpha+2,\infty)}}+ \|\Lambda^s\left(u(t)-U(t)u_+\right)\|_{L^{(\alpha+2,\infty)}}\Big)=0.
$$
\end{theorem}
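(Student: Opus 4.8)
The plan is to make the scattering state explicit and then rerun, on the interval $[t,\infty)$ instead of $[0,t]$, exactly the estimates used for $J_{1},\dots,J_{4}$ in the proof of Theorem \ref{globaltheorem1}. Writing $F(u)=a|u|^{\alpha}u+bE(|u|^{\gamma})u$, I would set
$$
u_{+}:=u_{0}+i\int_{0}^{\infty}U(-s)F(u)(s)\,ds ,
$$
which is well defined as a tempered distribution: the map $t\mapsto U(-t)u(t)=u_{0}+i\int_{0}^{t}U(-s)F(u)(s)\,ds$ converges in $\mathcal{S}'$ as $t\to\infty$, because testing against a Schwartz function and using the dispersive bound of Lemma \ref{lema2.1} for $U(t_{1}-s)$ (with negative time), the tail $\int_{t_{1}}^{\infty}$ of the integrand is controlled by $C(s-t_{1})^{-\frac{n\alpha}{d(\alpha+2)}}\big(s^{-\beta(\alpha+1)}+s^{-\gamma\delta-\beta}\big)$, which is integrable and tends to $0$ as $t_{1}\to\infty$ (integrability near $s=0$ uses $\beta(\alpha+1)<1$ and $\gamma\delta+\beta<1$). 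Since $u$ solves \eqref{eqintds} and $U(t)$ is a group commuting with $\Lambda^{s}$, one gets
$$
u(t)-U(t)u_{+}=-i\int_{t}^{\infty}U(t-s)F(u)(s)\,ds ,
$$
and the same identity with $\Lambda^{s}$ applied inside the integral.

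To prove \eqref{a10.2} I would apply Lemma \ref{lema2.1} with exponent $\frac{\alpha+2}{\alpha+1}$ (giving the time factor $(s-t)^{-\frac{n\alpha}{d(\alpha+2)}}$), then the pointwise bound \eqref{wellknow} and H\"older for the $a$-term and the estimate for $\|E(|u|^{\gamma})u\|_{L^{(\frac{\alpha+2}{\alpha+1},\infty)}}$ already derived in the proof of Theorem \ref{globaltheorem1} (taken with $v=0$, followed by Remark \ref{remark1}) for the $b$-term, and finally insert the a priori bounds $\|u(s)\|_{L^{(\alpha+2,\infty)}}\le s^{-\beta}\|u\|_{\beta}$ and $\|\Lambda^{s}u(s)\|_{L^{(\alpha+2,\infty)}}\le s^{-\delta}\|u\|_{\delta,s}$ coming from $u\in X_{M}$. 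This yields
$$
\|u(t)-U(t)u_{+}\|_{L^{(\alpha+2,\infty)}}\le C\|u\|_{\beta}^{\alpha+1}\!\int_{t}^{\infty}\!(s-t)^{-\frac{n\alpha}{d(\alpha+2)}}s^{-\beta(\alpha+1)}ds+C\|u\|_{\delta,s}^{\gamma}\|u\|_{\beta}\!\int_{t}^{\infty}\!(s-t)^{-\frac{n\alpha}{d(\alpha+2)}}s^{-\gamma\delta-\beta}ds .
$$
The substitution $s=t\sigma$ turns each integral into a power of $t$ times a $\sigma$-integral over $[1,\infty)$; by \eqref{alberel} and \eqref{gaderel} both powers of $t$ equal $-\beta$, and the $\sigma$-integrals converge because $\frac{n\alpha}{d(\alpha+2)}<1$ by \eqref{alpha} (integrability at $\sigma=1$) and the total exponent at infinity exceeds $1$ by $\beta>0$.

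Estimate \eqref{a10.22} is obtained identically after moving $\Lambda^{s}$ inside, now using Lemma \ref{lemadeux} for $\Lambda^{s}(|u|^{\alpha}u)$ and \eqref{ineq-IV-parcel-2} for $\Lambda^{s}(E(|u|^{\gamma})u)$, which produces the time weights $s^{-\alpha\beta-\delta}$ and $s^{-(\gamma+1)\delta}$; after $s=t\sigma$, relations \eqref{alberel}--\eqref{gaderel} give the power $-\delta$ of $t$ in both terms, and convergence of the $\sigma$-integrals over $[1,\infty)$ is checked exactly as before. Finally, to see $u_{+}\in\mathcal{I}_{\tilde{\rho}}$, write $U(t)u_{+}=u(t)+i\int_{t}^{\infty}U(t-s)F(u)(s)\,ds$: the first term has $X$-norms bounded by $M$ since $u\in X_{M}$, and the tail is bounded by $Ct^{-\beta}$ and $Ct^{-\delta}$ in the two norms by \eqref{a10.2}--\eqref{a10.22}, so $\sup_{t>0}t^{\beta}\|U(t)u_{+}\|_{L^{(\alpha+2,\infty)}}$ and $\sup_{t>0}t^{\delta}\|\Lambda^{s}U(t)u_{+}\|_{L^{(\alpha+2,\infty)}}$ are finite, i.e. $U(t)u_{+}\in X_{\tilde{\rho}}$ for a suitable $\tilde{\rho}>0$; the limit assertion is then immediate.

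I expect the only genuinely delicate point to be the well-definedness of $u_{+}$: since $U(-t)$ is not bounded on $L^{(p,\infty)}$ for $p\neq 2$, the convergence of the defining improper integral must be argued in $\mathcal{S}'$ (via duality and the dispersive decay of $U$), rather than by a norm estimate. Once the identity for $u(t)-U(t)u_{+}$ is in hand, the decay estimates are a routine rerun of the contraction computation; the one arithmetic subtlety is that after rescaling the time integral runs over $[1,\infty)$, so integrability at the endpoint $\sigma=1$ is precisely the condition $\frac{n\alpha}{d(\alpha+2)}<1$ from \eqref{alpha}, while integrability at infinity is automatic.
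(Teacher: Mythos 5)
Your proposal is correct and follows essentially the same route as the paper: define $u_{+}=u_{0}+i\int_{0}^{\infty}U(-s)F(u)(s)\,ds$, reduce the estimates \eqref{a10.2}--\eqref{a10.22} to the tail integral $\int_{t}^{\infty}U(t-s)F(u)(s)\,ds$ via the dispersive bound of Lemma \ref{lema2.1}, the nonlinear estimates from the proof of Theorem \ref{globaltheorem1} (including Lemma \ref{lemadeux} and \eqref{ineq-IV-parcel-2}), and the scaling relations \eqref{alberel}--\eqref{gaderel}, and then conclude $u_{+}\in\mathcal{I}_{\tilde\rho}$ from the finiteness of the two weighted sup-norms. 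Your slightly more careful treatment of the well-definedness of $u_{+}$ in $\mathcal{S}'$ (the paper instead splits off $\int_{0}^{1}$ and shows norm convergence of the tail from $s=1$ on) and your use of $U(t)u_{+}=u(t)+i\int_{t}^{\infty}U(t-s)F(u)(s)\,ds$ instead of the paper's split $\int_{0}^{t}+\int_{t}^{\infty}$ are only cosmetic variations of the same argument.
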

\begin{proof}
To simplify notation let us write $F(s)=a(|u|^\alpha u)(s)+b(E(|u|^\gamma) u)(s)$. From \eqref{eqintds}, we
have, for $t>1$,
\begin{equation}\label{a11}
U(-t)u(t)=u_0+i\int_0^1U(-s)F(s)ds+i\int_1^tU(-s)F(s)ds.
\end{equation}
Let us show that the last integral on the right-hand side of \eqref{a11} is convergent as $t\to\infty$. In fact, as in the proof of Theorem \ref{globaltheorem1},
\begin{equation*}
\begin{split}
\int_1^t\|U(-s)F(s)\|_{L^{(\alpha+2,\infty)}}ds&\leq
C\|u\|_\beta^{\alpha+1}\int_1^ts^{-{\frac{n}{d}\frac{\alpha}{\alpha+2}}}s^{-\beta(\alpha+1)}ds\\ &\quad \quad+C\|u\|_{\delta,s}^\gamma\|u\|_\beta \int_1^ts^{-{\frac{n}{d}\frac{\alpha}{\alpha+2}}}s^{-(\gamma\delta+\beta)}ds\\
&\leq CM^{\alpha+1}\int_1^ts^{-\left(1+\beta\right)}ds+ CM^{\gamma+1}\int_1^ts^{-\left(1+\beta\right)}ds\\
&\leq C(M)(1-t^{-\beta}),
\end{split}
\end{equation*}
where in the second inequality we have used \eqref{alberel} and \eqref{gaderel}. This implies
that the distribution
$$
u_+:=u_0+i\int_0^\infty U(-s)F(s)ds
$$
is well-defined. Note that
$$
U(t)u_+=U(t)u_0+i\int_0^\infty U(t-s)F(s)ds.
$$
We are going to show that $u_+\in \mathcal{I}_{\tilde\rho}$, for some $\tilde\rho>0$. First we claim that $\|u_+\|_{\beta}<\infty$. For that, it is sufficient to show that
\begin{equation}\label{interel}
\int_0^\infty\|U(t-s)F(s)\|_{L^{(\alpha+2,\infty)}}\leq Ct^{-\beta}, \text{ for all } t>0.
\end{equation}
To establish \eqref{interel} we split
\begin{equation}\label{a12}
\int_0^\infty U(t-s)F(s)ds=\int_0^tU(t-s)F(s)ds+\int_t^\infty U(t-s)F(s)ds.
\end{equation}
The first integral on the right-hand side of \eqref{a12} can be estimated as
in the proof of Theorem \ref{globaltheorem1}. So that,
\begin{equation}\label{a13}
\int_0^t\|U(t-s)F(s)\|_{L^{(\alpha+2,\infty)}}ds\leq
Ct^{-\beta}\Big(\|u\|_\beta^{\alpha+1}+\|u\|^\gamma_{\delta,s}\|u\|_\beta\Big)\leq C(M^{\alpha+1}+M^{\gamma+1})t^{-\beta}.
\end{equation}
For the second integral, by using \eqref{alberel} and \eqref{gaderel}, we deduce
\begin{equation}\label{a14}
\begin{split}
\int_t^\infty\|U(t-s)F(s)\|_{L^{(\alpha+2,\infty)}}ds&\leq
Ct^{-\beta}M^{\alpha+1}\int_1^\infty
(s-1)^{-{\frac{n}{d}\frac{\alpha}{\alpha+2}}}s^{-(\alpha\beta+\beta)}ds\\
& \quad + Ct^{-\beta}M^{\gamma+1}\int_1^\infty
(s-1)^{-{\frac{n}{d}\frac{\alpha}{\alpha+2}}}s^{-(\gamma\delta+\beta)}ds\\
& =Ct^{-\beta} I_1+Ct^{-\beta}I_2,
\end{split}
\end{equation}
where the integrals $I_1$ and $I_2$ are finite. Using \eqref{a13} and \eqref{a14}, we obtain \eqref{interel}. From our calculations above, it is also clear that \eqref{a10.2} holds.

 By following the arguments above and using the estimates for $J_{3}$ and $J_{4}$, it is not difficult to see that $\|u_+\|_{\delta, s}<\infty$ and that \eqref{a10.22} also holds. The proof of the theorem is thus completed.
\end{proof}

In next result, we investigate suitable conditions on the initial data so that solutions present decay faster than those in Theorem \ref{globaltheorem1}.

\begin{theorem}\label{globaltheorem2}
Under the hypotheses of Theorem \ref{globaltheorem1}.
\begin{itemize}
\item[(i)]  Assume that $u_0$ satisfies $\| U(t)u_0\|_{\mu}<\infty$ for some $\mu \geq 0$ with $\alpha \beta +\mu <1.$ Assume also that there exist positive constants $C_5$ and $C_6$ such that
\begin{align}\label{ab}
|a|C_5M^\alpha +|b|C_6M^\gamma <1,
\end{align}
where $C_5$ and $C_6$ are constants explicitly obtained in the calculations below.
Then, the solution $u$ given by Theorem \ref{globaltheorem1} verifies the decay property $\| u\|_{\mu}<\infty$.

\item[(ii)]  Assume that $u_0$ satisfies $\|U(t) u_0\|_{\nu,s}<\infty$ for some $\nu \geq 0$ with $\alpha \beta +\nu <1.$ Assume also that there exist positive constants $C_7$ and $C_8$ such that
$$
|a|C_7M^\alpha +|b|C_8M^\gamma <1,
$$
where $C_7$ and $C_8$ are constant explicitly obtained in the calculations below. Then, the solution $u$ given by Theorem \ref{globaltheorem1} satisfies the property $\| u\|_{\nu,s}<\infty$.
\end{itemize}
\end{theorem}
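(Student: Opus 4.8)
The plan is to run a bootstrapping (a priori estimate) argument on the integral equation \eqref{eqintds}, treating the already-constructed solution $u\in X_M$ as known and upgrading its time-decay. For part (i), I would introduce the auxiliary quantity $\|u\|_\mu=\sup_{t>0}t^\mu\|u(t)\|_{L^{(\alpha+2,\infty)}}$ and show it is finite by estimating $t^\mu\|B(u)(t)\|_{L^{(\alpha+2,\infty)}}$ together with the linear term $t^\mu\|U(t)u_0\|_{L^{(\alpha+2,\infty)}}$, which is finite by hypothesis. Applying Lemma \ref{lema2.1} with the exponent pair $(\tfrac{\alpha+2}{\alpha+1},\alpha+2)$ exactly as in the estimate of $J_1$ and $J_2$ in the proof of Theorem \ref{globaltheorem1}, and using the pointwise bound \eqref{wellknow} and Lemma \ref{lemadeux}, I would bound the nonlinear contribution by
\begin{equation*}
t^\mu\int_0^t (t-\tau)^{-\frac{n}{d}\frac{\alpha}{\alpha+2}}\Big(\|u(\tau)\|_{L^{(\alpha+2,\infty)}}^\alpha\|u(\tau)\|_{L^{(\alpha+2,\infty)}}+\|\Lambda^su(\tau)\|_{L^{(\alpha+2,\infty)}}^\gamma\|u(\tau)\|_{L^{(\alpha+2,\infty)}}\Big)\,d\tau.
\end{equation*}
Inserting $\|u(\tau)\|_{L^{(\alpha+2,\infty)}}\le M\tau^{-\beta}$ for the $\alpha$ powers (respectively $M\tau^{-\delta}$ for the $\Lambda^s$ factors) but $\|u(\tau)\|_{L^{(\alpha+2,\infty)}}\le \|u\|_\mu\,\tau^{-\mu}$ for the single ``extra'' factor, the integral becomes $\|u\|_\mu\, M^{\alpha}\, t^{\mu}\int_0^t(t-\tau)^{-\frac{n}{d}\frac{\alpha}{\alpha+2}}\tau^{-\alpha\beta-\mu}\,d\tau$ plus an analogous term with $M^\gamma$ and exponent $\gamma\delta+\beta+\mu-\beta$... more precisely one reuses the relations \eqref{alberel} and \eqref{gaderel}. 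After the change of variables $\tau=ts$ the $t$-powers balance precisely to $t^{-\mu}$ because $\frac{n}{d}\frac{\alpha}{\alpha+2}+\alpha\beta=1$ (and similarly for the $\gamma$ term via \eqref{gaderel}), and the remaining Beta-type integral $\int_0^1(1-s)^{-\frac{n}{d}\frac{\alpha}{\alpha+2}}s^{-\alpha\beta-\mu}\,ds$ converges precisely because $\frac{n\alpha}{d(\alpha+2)}<1$ and $\alpha\beta+\mu<1$. This yields $\|u\|_\mu\le \|U(t)u_0\|_\mu+(|a|C_5M^\alpha+|b|C_6M^\gamma)\|u\|_\mu$, and the smallness assumption \eqref{ab} lets one absorb the last term, giving $\|u\|_\mu<\infty$.

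For part (ii) the structure is identical but now one controls $\|u\|_{\nu,s}=\sup_{t>0}t^\nu\|\Lambda^su(t)\|_{L^{(\alpha+2,\infty)}}$, using the estimates behind $J_3$ and $J_4$: Lemma \ref{lema2.1} applied after $\Lambda^s$, Lemma \ref{lemadeux} for $\Lambda^s(|u|^\alpha u)$, and the Leibniz-rule computation \eqref{ineq-IV-parcel-2} for $\Lambda^s(E(|u|^\gamma)u)$. One again isolates a single $\|\Lambda^su(\tau)\|_{L^{(\alpha+2,\infty)}}$ factor and bounds it by $\|u\|_{\nu,s}\tau^{-\nu}$ while the remaining factors get the usual $M\tau^{-\beta}$ or $M\tau^{-\delta}$ bounds. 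The relations $\beta\alpha+\delta<1$ and $\delta(\gamma+1)<1$ already used for $J_3+J_4$, together with $\alpha\beta+\nu<1$, guarantee the time integrals converge and reproduce the weight $t^{-\nu}$, so $\|u\|_{\nu,s}\le\|U(t)u_0\|_{\nu,s}+(|a|C_7M^\alpha+|b|C_8M^\gamma)\|u\|_{\nu,s}$, and the smallness hypothesis closes the estimate.

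One subtlety to be careful about is that the a priori bound $\|u\|_\mu<\infty$ (resp. $\|u\|_{\nu,s}<\infty$) should be justified rigorously rather than assumed: since $u$ is already known to lie in $X_M$, one can first show $\sup_{t\in(0,T)}t^\mu\|u(t)\|_{L^{(\alpha+2,\infty)}}<\infty$ for each finite $T$ (which is immediate from $u\in X_M$ and continuity), then run the closed estimate above on $(0,T)$ with a constant independent of $T$, and finally let $T\to\infty$; this is the standard device for turning the Duhamel inequality into a genuine bound. I expect this finiteness/continuity bookkeeping, rather than the integral estimates themselves, to be the only real obstacle — the kernel estimates are essentially a rerun of the $J_1,\dots,J_4$ computations in the proof of Theorem \ref{globaltheorem1}, with the exponent arithmetic again reducing to \eqref{alberel}, \eqref{gaderel}, and the convergence conditions $\frac{n\alpha}{d(\alpha+2)}<1$ and $\alpha\beta+\mu<1$ (resp. $\alpha\beta+\nu<1$).
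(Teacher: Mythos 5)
Your integral estimates are exactly the ones the paper uses (its $J_{5},J_{6}$ for part (i) and $J_{7},J_{8}$ for part (ii), with the same exponent arithmetic via \eqref{alberel}, \eqref{gaderel}, $\alpha\beta=\gamma\delta$, and the convergence conditions $\tfrac{n\alpha}{d(\alpha+2)}<1$, $\alpha\beta+\mu<1$, $\delta\gamma+\nu<1$). The difference is in how the estimate is closed, and this is where your argument has a genuine gap. You close by absorption, which requires knowing beforehand that the quantity being absorbed is finite; your proposed fix is to truncate in time and claim $\sup_{t\in(0,T)}t^{\mu}\Vert u(t)\Vert_{L^{(\alpha+2,\infty)}}<\infty$ is ``immediate from $u\in X_{M}$ and continuity.'' That is only true when $\mu\geq\beta$: from $u\in X_{M}$ one gets $t^{\mu}\Vert u(t)\Vert_{L^{(\alpha+2,\infty)}}\leq M\,t^{\mu-\beta}$, which is bounded on $(0,T)$ precisely when $\mu\geq\beta$. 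The theorem, however, allows any $\mu\geq0$ with $\alpha\beta+\mu<1$ (e.g.\ $\mu=0$), and for $\mu<\beta$ the possible divergence is at $t\to0^{+}$, which truncating at a finite $T$ does nothing to control; membership in $X_{M}$ gives no continuity or boundedness of $u(t)$ as $t\to0^{+}$ (generically $\Vert u(t)\Vert_{L^{(\alpha+2,\infty)}}\sim t^{-\beta}$ there). The same issue occurs in part (ii) when $\nu<\delta$. So as written your argument proves the statement only in the restricted range $\mu\geq\beta$ (resp.\ $\nu\geq\delta$).

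The paper avoids this circularity altogether by re-running the contraction rather than bootstrapping: it sets $\sigma\geq\sup_{t>0}t^{\mu}\Vert U(t)u_{0}\Vert_{L^{(\alpha+2,\infty)}}$, chooses $K\geq\sigma/\bigl(1-(|a|C_{5}M^{\alpha}+|b|C_{6}M^{\gamma})\bigr)$, and considers the subset $Y_{M,K}=\{w\in X_{M};\ \Vert w\Vert_{\mu}\leq K\}$, which is a nonempty complete metric space for the same distance $d$ of Definition \ref{def1}. The $J_{5},J_{6}$ estimates show $\Phi(Y_{M,K})\subset Y_{M,K}$, and $\Phi$ is already a contraction for $d$ (proof of Theorem \ref{globaltheorem1}), so it has a fixed point in $Y_{M,K}$; by uniqueness of the fixed point in $X_{M}$ this fixed point is the solution $u$, whence $\Vert u\Vert_{\mu}\leq K<\infty$ with no a priori finiteness needed. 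Part (ii) is identical with $Z_{M,K}=\{w\in X_{M};\ \Vert w\Vert_{\nu,s}\leq K\}$. If you replace your absorption/truncation step by this ``fixed point in a smaller invariant complete subset plus uniqueness'' device (keeping your estimates verbatim), you recover the full range $\mu\geq0$, $\nu\geq0$ claimed in the theorem.
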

\begin{proof}

Working as in the estimates for $J_{1}$ and $J_{2}$ above, we can estimate
\begin{align*}
J_{5}:= &|a|t^\mu\int_0^t \|U(t-\tau)(|u|^\alpha)(\tau)\|_{L^{(\alpha+2,\infty)} }d\tau\\
 \leq & |a|C M^\alpha t^\mu\int_0^t(t-\tau)^{-\frac{n}{d}(\frac{\alpha}{\alpha+2})}\tau^{-\beta \alpha-\mu}d\tau\sup_{t>0}t^\mu\|u(t)\|_{L^{(\alpha+2,\infty)}}\\
 \leq &
|a| C_5M^\alpha \|u\|_{\mu},
\end{align*}
where in the last inequality we used \eqref{alberel} and the facts that $\frac{n\alpha}{d(\alpha+2)}<1$ and $ \beta\alpha+\mu <1$.

Now using the same ideas,
\begin{align*}
 J_{6}:=&|b|t^\mu \int_0^t \|U(t-\tau)[E(|u|^\gamma) u (\tau)]\|_{L^{(\alpha+2,\infty)} }d\tau\\
  \leq & |b|C M^\gamma t^\mu\int_0^t(t-\tau)^{-\frac{n}{d}(\frac{\alpha}{\alpha+2})}\tau^{-\delta \gamma-\mu}d\tau \sup_{t>0}t^\mu\|u(t)\|_{L^{(\alpha+2,\infty)}}\\
 \leq &
|b| C_6M^\gamma \|u\|_{\mu},
\end{align*}
where in the last inequality we used that $ \frac{n\alpha}{d(\alpha+2)}<1,\ \delta\gamma+\mu <1$ and \eqref{gaderel}. Note that $\alpha\beta=\gamma\delta$, so the assumption $\alpha\beta+\mu<1$ also gives $\gamma\delta+\mu<1$.

We will reapply the contraction-mapping argument in order to get the desired property. Since $\| U(t)\varphi\|_{\mu}<\infty$, there exists $\sigma>0$ such that $\| U(t)\varphi\|_{\mu}\leq\sigma$. From hypothesis \eqref{ab} we have that $1-(|a|C_5M^\alpha +|b|C_6M^\gamma)>0$. Hence, by  choosing  $K>0$ such that
$$
K\geq \frac{\sigma}{1-(|a|C_5M^\alpha +|b|C_6M^\gamma)},
$$
we deduce that
$$\sigma+|a|C_5M^\alpha K +|b|C_6M^\gamma K \leq K.$$
Now consider the following subspace $Y_{M,K}\subset X_M$:
$$Y_{M,K}=\{w\in X_M; \| w\|_{\mu}\leq K\}.$$
Observe that $(Y_{M,K},d)$ is a nonempty complete metric space, with $d$ as in Definition \ref{def1}.

Let $\Phi$ be the integral operator defined in \eqref{Z1}. Let us show that $\Phi$ maps $Y_{M,K}$
into itself and $\Phi:Y_{M,K}\rightarrow
Y_{M,K}$ is a contraction.
Suppose that $u \in Y_{M,K}$.
Estimates for $J_{5}$ and $J_{6}$ yield
$$\| \Phi(u)\|_{\mu}\leq \sigma+|a|C_5M^\alpha K +|b|C_6M^\gamma K \leq K,$$
proving that $\Phi(Y_{M,K})\subset Y_{M,K}$. Since the distance in $Y_{M,K}$ is that in $X_M$ and we already proved that $\Phi$ is a contraction on $X_M$ (see proof of Theorem \ref{globaltheorem1}) we have that $\Phi$ is also a contraction on $Y_{M,K}$. This implies that the solution $u$ in $X_M$ is also in $Y_{M,K}$, which means that $\|u\|_\mu<\infty$.

Now we turn to item $(ii)$. Suppose that  $\| U(t)\varphi\|_{\nu,s}\leq\sigma$ and choose $K>0$ such that
$$\sigma+|a|C_7M^\alpha K +|b|C_8M^\gamma K \leq K.$$
By defining $Z_{M,K}\subset X_M$ as
$$Z_{M,K}=\{w\in X_M; \| w\|_{\nu,s}\leq K\},$$
we see that $(Z_{M,K},d)$ is a nonempty complete metric space with the metric  $d$ as in Definition \ref{def1}. Let us show that $\Phi$ maps $Z_{M,K}$
into itself and $\Phi:Z_{M,K}\rightarrow
Z_{M,K}$ is a contraction.
By assuming $u \in Z_{M,K}$ and slightly adapting the estimate for $J_{3}$ in Theorem \ref{globaltheorem1}, we easily arrive at
\begin{align*}
 J_{7} &:=|a|t^\nu\int_0^t\|\Lambda^s U(t-\tau)(|u(\tau)|^\alpha u(\tau))\|_{L^{({\alpha+2},\infty)}}d\tau\\
 &\leq |a| C t^\nu\int_0^t(t-\tau)^{-\frac{n}{d}(\frac{\alpha}{\alpha+2})}\|u(\tau) \|^\alpha_{L^{(\alpha+2,\infty)}}\|\Lambda^s u(\tau))\|_{L^{(\alpha+2,\infty)}}d\tau\\
 &\leq |a|C t^\nu M^\alpha K\int_0^t(t-\tau)^{-\frac{n}{d}(\frac{\alpha}{\alpha+2})}\tau^{-\beta\alpha-\nu}d\tau\leq |a|C_7M^\alpha K,
\end{align*}
where  we used \eqref{alberel}, $ \frac{n\alpha}{d(\alpha+2)}<1$, and $\beta\alpha+\nu <1$.

Now we proceed as in the estimate for $J_{4}$ to get
\begin{align*}
 J_{8} &:=|b|t^\nu\int_0^t\|\Lambda^s U(t-\tau)[E(|u(\tau)|^\gamma) u(\tau)]\|_{L^{({\alpha+2},\infty)}}d\tau\\
 &\leq |b|C t^\nu\int_0^t(t-\tau)^{-\frac{n}{d}(\frac{\alpha}{\alpha+2})}\|\Lambda^s u(\tau))\|^{\gamma+1}_{L^{(\alpha+2,\infty)}}d\tau\\
 &\leq  |b|Ct^{\nu}M^\alpha K\int_0^t(t-\tau)^{-\frac{n}{d}(\frac{\alpha}{\alpha+2})}\tau^{-\delta\gamma-\nu}d\tau\leq |b|C_8M^\gamma K,
\end{align*}
where  we used \eqref{gaderel}, $ \frac{n\alpha}{d(\alpha+2)}<1$, and $\delta\gamma+\nu <1$.

Using the estimates for $J_{7}$ and $J_{8}$, it follows that
\begin{align*}
\|\Phi(u)\|_{\nu,s}\leq \sigma
 +|a|C_7M^\alpha K+|b|C_8M^\gamma K\leq K,
\end{align*}
proving that $\Phi(Z_{M,K})\subset Z_{M,K}$. The conclusion then follows as in the first part and we are done.
\end{proof}

From another point of view, the solution $u$ in Theorem \ref{globaltheorem1} and Theorem \ref{globaltheorem2} $(i)$ satisfies
$\Vert u(\cdot,t)\Vert_{L^{(\alpha+2,\infty)}}=\mathcal{O}(t^{-\mu})$ provided that
$\Vert U(t)u_{0}\Vert_{L^{(\alpha+2,\infty)}}=\mathcal{O}(t^{-\mu})$ as $t\rightarrow
\infty$, for $\mu=\beta$ and $\mu(\alpha+1)<1$, respectively. In the sequel we provide a criterion for solutions to be
asymptotically stable,  which, in particular, assures that we can replace $\mathcal{O}(t^{-\mu})$
by $o(t^{-\mu})$ in the last two equalities.

\begin{theorem}[Asymptotic Stability]\label{asymest} Assume the hypotheses of Theorem
\ref{globaltheorem1} and that $u_{0},v_{0}\in\mathcal{I}_{\rho}$. For some
$\mu\geq\beta$ with $\alpha\beta+\mu<1,$ suppose that $\left\Vert
U(t)u_{0}\right\Vert _{\mu}<\infty$, $\left\Vert U(t)v_{0}\right\Vert _{\mu
}<\infty$ and
\[
\lim_{t\rightarrow\infty}t^{\mu}\Vert U(t)(u_{0}-v_{0})\Vert_{L^{(\alpha
+2,\infty)}}=0.
\]
For $\mu>\beta,$ assume that $|a|C_{5}M^{\alpha}+|b|C_{6}M^{\gamma}<1$. For
$\mu=\beta$, we have condition \eqref{b}. Let $u$ and $v$ be the solutions of
\eqref{eqintds} with initial values $u_{0}$ and $v_{0}$, respectively, given
by Theorem \ref{globaltheorem1}. Then
\[
\lim_{t\rightarrow\infty}t^{\mu}\Vert u(t)-v(t)\Vert_{L^{(\alpha+2,\infty)}%
}=0.
\]

\end{theorem}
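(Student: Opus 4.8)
The plan is to show that
\[
L:=\limsup_{t\to\infty}t^{\mu}\|u(t)-v(t)\|_{L^{(\alpha+2,\infty)}}
\]
satisfies $L\le(|a|C_5M^\alpha+|b|C_6M^\gamma)L$ (with $C_5,C_6$ replaced by the constants $C_1,C_2$ of \eqref{b} when $\mu=\beta$), which together with the finiteness of $L$ and the smallness hypothesis forces $L=0$. Write $w=u-v$. The first point is that $L<\infty$: for $\mu=\beta$ this is immediate since $u,v\in X_M$, while for $\mu>\beta$ it follows from Theorem \ref{globaltheorem2}(i), whose hypotheses $\|U(t)u_0\|_\mu<\infty$, $\|U(t)v_0\|_\mu<\infty$ and $|a|C_5M^\alpha+|b|C_6M^\gamma<1$ are exactly those assumed here; this gives $\|u\|_\mu,\|v\|_\mu<\infty$, whence $t^{\mu}\|w(t)\|_{L^{(\alpha+2,\infty)}}\le\|u\|_\mu+\|v\|_\mu$. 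Subtracting the Duhamel representations of $u$ and $v$ we get
\[
w(t)=U(t)(u_0-v_0)+i\int_0^tU(t-\tau)\big(a(|u|^\alpha u-|v|^\alpha v)+b(E(|u|^\gamma)u-E(|v|^\gamma)v)\big)(\tau)\,d\tau,
\]
and since $t^{\mu}\|U(t)(u_0-v_0)\|_{L^{(\alpha+2,\infty)}}\to0$ by hypothesis, the whole matter reduces to controlling $\limsup_{t\to\infty}$ of $t^{\mu}$ times the Duhamel term.

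For the nonlinear part I would recycle the estimates already proved for Theorems \ref{globaltheorem1} and \ref{globaltheorem2}. Lemma \ref{lema2.1}, the pointwise inequality \eqref{wellknow} and H\"older's inequality give
\[
\|U(t-\tau)(|u|^\alpha u-|v|^\alpha v)(\tau)\|_{L^{(\alpha+2,\infty)}}\le C(t-\tau)^{-\frac nd\frac{\alpha}{\alpha+2}}\big(\|u(\tau)\|_{L^{(\alpha+2,\infty)}}^{\alpha}+\|v(\tau)\|_{L^{(\alpha+2,\infty)}}^{\alpha}\big)\|w(\tau)\|_{L^{(\alpha+2,\infty)}},
\]
and the chain of inequalities used for $J_2$ (with \textbf{(H3)} and Remark \ref{remark1}) yields the analogous bound for $E(|u|^\gamma)u-E(|v|^\gamma)v$ with $\|\Lambda^su(\tau)\|^\gamma_{L^{(\alpha+2,\infty)}}+\|\Lambda^sv(\tau)\|^\gamma_{L^{(\alpha+2,\infty)}}$ in place of $\|u(\tau)\|^\alpha_{L^{(\alpha+2,\infty)}}+\|v(\tau)\|^\alpha_{L^{(\alpha+2,\infty)}}$. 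Since $u,v\in X_M$, using $\|u(\tau)\|_{L^{(\alpha+2,\infty)}}\le M\tau^{-\beta}$ and $\|\Lambda^su(\tau)\|_{L^{(\alpha+2,\infty)}}\le M\tau^{-\delta}$, both contributions are dominated by integrals of the form $C\int_0^t(t-\tau)^{-\frac nd\frac{\alpha}{\alpha+2}}\tau^{-\alpha\beta}\|w(\tau)\|_{L^{(\alpha+2,\infty)}}\,d\tau$, where $\alpha\beta=\gamma\delta$ by the choice \eqref{s} of $s$ and $\alpha\beta+\frac nd\frac{\alpha}{\alpha+2}=1$ by \eqref{alberel}--\eqref{gaderel}.

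The decisive step is the splitting $\int_0^t=\int_0^{\epsilon t}+\int_{\epsilon t}^t$ with a small parameter $\epsilon\in(0,\tfrac12)$. On $(0,\epsilon t)$ I would use the decay bound $\|w(\tau)\|_{L^{(\alpha+2,\infty)}}\le(\|u\|_\mu+\|v\|_\mu)\tau^{-\mu}$; after the substitution $\tau=ts$ and using $\alpha\beta+\frac nd\frac{\alpha}{\alpha+2}=1$, $t^{\mu}$ times this piece is at most $C(\|u\|_\mu+\|v\|_\mu)\int_0^\epsilon(1-s)^{-\frac nd\frac{\alpha}{\alpha+2}}s^{-\alpha\beta-\mu}\,ds$, which is independent of $t$ and tends to $0$ as $\epsilon\to0$, precisely because $\alpha\beta+\mu<1$ (and $\frac nd\frac{\alpha}{\alpha+2}<1$, which is part of \eqref{alpha}). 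On $(\epsilon t,t)$, fix $\epsilon'>0$ and choose $T_0$ with $\tau^{\mu}\|w(\tau)\|_{L^{(\alpha+2,\infty)}}\le L+\epsilon'$ for $\tau\ge T_0$; then for $t\ge T_0/\epsilon$ the same substitution together with \eqref{alberel}--\eqref{gaderel} bounds $t^{\mu}$ times this piece by $(|a|C_5M^\alpha+|b|C_6M^\gamma)(L+\epsilon')$, where for $\mu=\beta$ the constants $C_5,C_6$ are replaced by $C_1,C_2$ of \eqref{b} (the weights $s^{-\alpha\beta-\beta}=s^{-(\alpha+1)\beta}$ and $s^{-\gamma\delta-\beta}=s^{-(\beta+\gamma\delta)}$ are exactly those in the integrals defining $C_1,C_2$). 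Adding the linear term and the two Duhamel pieces, taking $\limsup_{t\to\infty}$, then letting $\epsilon'\to0$ and $\epsilon\to0$, one arrives at $L\le(|a|C_5M^\alpha+|b|C_6M^\gamma)L$; since $L<\infty$ and the coefficient is $<1$ (by hypothesis when $\mu>\beta$, by \eqref{b} when $\mu=\beta$), this forces $L=0$, i.e.\ $t^{\mu}\|u(t)-v(t)\|_{L^{(\alpha+2,\infty)}}\to0$.

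I expect the main obstacle to be purely organizational: one must perform the splitting so that the near-diagonal piece $\int_{\epsilon t}^t$ reproduces exactly the coefficient $|a|C_5M^\alpha+|b|C_6M^\gamma$ (resp.\ $|a|C_1M^\alpha+|b|C_2M^\gamma$) for which a smallness hypothesis is available, and one must use the condition $\alpha\beta+\mu<1$ precisely in order to render the remaining piece $\int_0^{\epsilon t}$ negligible uniformly in $t$. No analytic ingredient beyond Lemma \ref{lema2.1} and the estimates already established for Theorems \ref{globaltheorem1} and \ref{globaltheorem2} is needed.
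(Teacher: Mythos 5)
Your proposal is correct and follows essentially the same route as the paper: subtract the Duhamel formulas, bound the nonlinear difference exactly as in the $J_1$ and $J_2$ estimates, secure finiteness of $\limsup_{t\to\infty}t^{\mu}\|u(t)-v(t)\|_{L^{(\alpha+2,\infty)}}$ via Theorem \ref{globaltheorem1} (for $\mu=\beta$) or Theorem \ref{globaltheorem2}(i) (for $\mu>\beta$), and close with the smallness of $|a|C_1M^{\alpha}+|b|C_2M^{\gamma}$ (resp. $|a|C_5M^{\alpha}+|b|C_6M^{\gamma}$). The only difference is the final limit passage, where the paper rescales $\tau=tz$ and applies dominated convergence to the $\limsup$, while you use an elementary $\int_0^{\epsilon t}+\int_{\epsilon t}^{t}$ splitting with the $\alpha\beta+\mu<1$ condition; both are valid and yield the same inequality $A\le RA$ with $R<1$.
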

\begin{proof}
 For $\mu\geq\beta$, we can estimate
\begin{align*}
t^{\mu}\Vert u(t)-v(t)\Vert_{L^{(\alpha+2,\infty)}} &  \leq t^{\mu}\Vert
U(t)(u_{0}-v_{0})\Vert_{L^{(\alpha+2,\infty)}}\\
&  \quad+|a|CM^{\alpha}t^{\mu}\int_{0}^{t}(t-\tau)^{-\frac{n}{d}(\frac{\alpha
}{\alpha+2})}\tau^{-\beta\alpha-\mu}\tau^{\mu}\Vert u(\tau)-v(\tau
)\Vert_{L^{(\alpha+2,\infty)}}d\tau\\
&  \quad+|b|CM^{\gamma}t^{\mu}\int_{0}^{t}(t-\tau)^{-\frac{n}{d}(\frac{\alpha
}{\alpha+2})}\tau^{-\delta\gamma-\mu}\tau^{\mu}\Vert u(\tau)-v(\tau
)\Vert_{L^{(\alpha+2,\infty)}}d\tau
\end{align*}

Making the change of variables $\tau=tz,$ we arrive at
\begin{align}
t^{\mu}\Vert u(t)-v(t)\Vert_{L^{(\alpha+2,\infty)}} &  \leq t^{\mu}\Vert
U(t)(u_{0}-v_{0})\Vert_{L^{(\alpha+2,\infty)}}\nonumber\\
&  \quad+|a|CM^{\alpha}\int_{0}^{1}(1-z)^{-\frac{n}{d}(\frac{\alpha}{\alpha
+2})}z^{-\beta\alpha-\mu}(tz)^{\mu}\Vert u(tz)-v(tz)\Vert_{L^{(\alpha
+2,\infty)}}dz\nonumber\\
&  \quad+|b|CM^{\gamma}\int_{0}^{1}(1-z)^{-\frac{n}{d}(\frac{\alpha}{\alpha
+2})}z^{-\delta\gamma-\mu}(tz)^{\mu}\Vert u(tz)-v(tz)\Vert_{L^{(\alpha
+2,\infty)}}dz\label{aux-stab-1}%
\end{align}
Denote
\[
A=\limsup_{t\rightarrow\infty}t^{\mu}\Vert u(t)-v(t)\Vert_{L^{(\alpha
+2,\infty)}}.
\]
For $\mu>\beta$ and $\mu=\beta$, we have that $A<\infty$ due to Theorems \ref{globaltheorem2} and \ref{globaltheorem1}, respectively. Computing
\textit{limsup} in both sides of (\ref{aux-stab-1}) and using Dominated
Convergence Theorem, we obtain
\begin{align}
A  & \leq0+|a|CM^{\alpha}\int_{0}^{1}(1-z)^{-\frac{n}{d}(\frac{\alpha}%
{\alpha+2})}z^{-\beta\alpha-\mu}dz\text{ }A+|b|CM^{\gamma}\int_{0}%
^{1}(1-z)^{-\frac{n}{d}(\frac{\alpha}{\alpha+2})}z^{-\delta\gamma-\mu
}dzA\nonumber\\
& =RA,\label{aux-stab-2}%
\end{align}
where $R=|a|C_{1}M^{\alpha}+|b|C_{2}M^{\gamma}<1$ for $\mu=\beta$ and
$R=|a|C_{5}M^{\alpha}+|b|C_{6}M^{\gamma}<1$ for $\mu>\beta.$  Thus, it follows
that $A=0,$ which gives the desired conclusion.
\end{proof}

Equation \eqref{NLS} has no scaling $u(x,t)\mapsto \lambda^{m} u( \lambda x, \lambda^2 t)$, for any $m\in\R$. This fact prevents the existence of self-similar solutions to \eqref{NLS}. Alternatively, we will prove that \eqref{NLS} admits a class of asymptotically self-similar solutions with respect to the scaling of \eqref{singlenls}. Resembling results in the $L^p$-setting can be found in \cite{W}.

For that matter, first note that Theorem \ref{globaltheorem1} with $b=0$ gives global mild solutions in
weak-$L^{p}$ spaces for \eqref{singlenls} (see also \cite{BP, SFR}), i.e., solutions $v$ of the integral equation

\begin{equation}
v(t)=U(t)v_{0}+i\int_{0}^{t}U(t-s)(a|v|^{\alpha}v)(s)ds,\label{eqintdsa}%
\end{equation}
satisfying $\|v\|_{\beta}\leq M$. Moreover, with a slight modification in the proof, we  only need to assume $\|U(t)v_0\|_{\beta}\leq\rho$ instead of $v_0\in\mathcal{I}_{\rho}$.

In the next theorem, we compare the mild solutions of \eqref{NLS} and
\eqref{singlenls}. In fact, we are going to prove that solutions of
\eqref{singlenls} attract those of \eqref{NLS} as $t\rightarrow\infty,$
depending on a suitable condition for the difference of the initial values
$\psi=u_{0}-v_{0}$.

\begin{theorem}
\label{sch-asymp} Under the hypotheses of Theorem \ref{globaltheorem1}. Let
$u$ be the corresponding solution of \eqref{eqintds} with initial value
$u_{0}\in\mathcal{I}_{\rho}$ given by Theorem \ref{globaltheorem1}. Let $v$ be
the solution of \eqref{eqintdsa} (i.e., \eqref{eqintds} with $b=0$) with
initial value $v_{0}$, such that $\|U(t)v_0\|_{\beta}\leq\rho$, also given by Theorem
\ref{globaltheorem1}. Suppose further that $u_{0}$ satisfies the hypotheses of
part (ii) of Theorem \ref{globaltheorem2} with some $\nu>\delta$ such that
$\gamma\nu+\beta<1$. Then, we have that

\begin{equation}
\lim_{t\rightarrow\infty}t^{\beta}\Vert u(t)-v(t)\Vert_{L^{(\alpha+2,\infty)}%
}=0,\label{stab-100}
\end{equation}

provided that
\begin{equation}
\lim_{t\rightarrow\infty}t^{\beta}\Vert U(t)(u_{0}-v_{0})\Vert_{L^{({\alpha
+2},\infty)}}=0.\label{cond-stab-100}
\end{equation}

\end{theorem}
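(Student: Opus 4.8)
The plan is to subtract the two integral equations and run a \emph{limsup} argument in the spirit of the proof of Theorem~\ref{asymest}. Writing \eqref{eqintds} for $u$ and \eqref{eqintdsa} for $v$ and subtracting,
\begin{equation*}
u(t)-v(t)=U(t)(u_0-v_0)+ia\int_0^t U(t-\tau)\big(|u|^\alpha u-|v|^\alpha v\big)(\tau)\,d\tau+ib\int_0^t U(t-\tau)\big(E(|u|^\gamma)u\big)(\tau)\,d\tau.
\end{equation*}
The first term is controlled directly by \eqref{cond-stab-100}; the second is the ``diagonal'' difference already treated (up to notation) in the estimate of $J_{1}$ and in Theorem~\ref{asymest}; the genuinely new ingredient is the last term, and the crux of the argument is to show that it decays \emph{strictly faster} than $t^{-\beta}$, hence is negligible in the limit.

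First I would deal with the $E$-term. By hypothesis $u_0$ satisfies the assumptions of part~(ii) of Theorem~\ref{globaltheorem2} with exponent $\nu>\delta$ and $\gamma\nu+\beta<1$, so that theorem yields $\|u\|_{\nu,s}<\infty$, i.e. $\|\Lambda^s u(\tau)\|_{L^{(\alpha+2,\infty)}}\le C\tau^{-\nu}$ for all $\tau>0$. Combining the bilinear estimate used for $J_{2}$ (specialised to $v=0$, so that $\|E(|u|^\gamma)u(\tau)\|_{L^{(\frac{\alpha+2}{\alpha+1},\infty)}}\le C\|\Lambda^s u(\tau)\|_{L^{(\alpha+2,\infty)}}^{\gamma}\|u(\tau)\|_{L^{(\alpha+2,\infty)}}\le C\,\tau^{-\gamma\nu}\tau^{-\beta}$) with Lemma~\ref{lema2.1} and the change of variables $\tau=tz$, one gets
\begin{equation*}
\Big\|ib\int_0^t U(t-\tau)\big(E(|u|^\gamma)u\big)(\tau)\,d\tau\Big\|_{L^{(\alpha+2,\infty)}}\le |b|C\,t^{\,1-\frac{n}{d}\frac{\alpha}{\alpha+2}-\gamma\nu-\beta}\int_0^1(1-z)^{-\frac{n}{d}\frac{\alpha}{\alpha+2}}z^{-\gamma\nu-\beta}\,dz.
\end{equation*}
The integral is finite because $\frac{n\alpha}{d(\alpha+2)}<1$ and $\gamma\nu+\beta<1$, and, using \eqref{gaderel}, the exponent of $t$ equals $\gamma\delta-\gamma\nu-\beta=-\beta-\gamma(\nu-\delta)<-\beta$ since $\nu>\delta$. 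Hence $t^\beta$ times this term tends to $0$ as $t\to\infty$.

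For the diagonal term, exactly as in Theorem~\ref{asymest} with $\mu=\beta$, I would use the pointwise bound \eqref{wellknow} with $m=\alpha$, Hölder's inequality, $\|u\|_\beta,\|v\|_\beta\le M$, Lemma~\ref{lema2.1}, the identity \eqref{alberel}, and the substitution $\tau=tz$ to arrive at
\begin{equation*}
t^\beta\Big\|ia\int_0^t U(t-\tau)\big(|u|^\alpha u-|v|^\alpha v\big)(\tau)\,d\tau\Big\|_{L^{(\alpha+2,\infty)}}\le |a|CM^\alpha\int_0^1(1-z)^{-\frac{n}{d}\frac{\alpha}{\alpha+2}}z^{-(\alpha+1)\beta}(tz)^\beta\|u(tz)-v(tz)\|_{L^{(\alpha+2,\infty)}}\,dz.
\end{equation*}
Now set $A=\limsup_{t\to\infty}t^\beta\|u(t)-v(t)\|_{L^{(\alpha+2,\infty)}}$, which is finite (indeed $\le 2M$) because $u,v\in X_M$. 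Taking $\limsup$ in the three-term estimate: the linear term vanishes by \eqref{cond-stab-100}, the $E$-term vanishes by the previous step, and for the last integral the dominated convergence theorem applies (the integrand is dominated by $2M(1-z)^{-\frac{n\alpha}{d(\alpha+2)}}z^{-(\alpha+1)\beta}$, integrable since $\frac{n\alpha}{d(\alpha+2)}<1$ and $(\alpha+1)\beta<1$), giving $A\le |a|C_1M^\alpha A$. Since condition \eqref{b} forces $|a|C_1M^\alpha<1$, we conclude $A=0$, which is precisely \eqref{stab-100}.

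The hard part is the $E$-term: with only $u\in X_M$ it decays like $t^{-\beta}$ (as in the bound for $J_{2}$), which would not suffice to kill it in the limit; it is exactly the additional decay $\|u\|_{\nu,s}<\infty$ with $\nu>\delta$ — provided by part~(ii) of Theorem~\ref{globaltheorem2} — together with the integrability requirement $\gamma\nu+\beta<1$ that upgrades it to $o(t^{-\beta})$. Apart from this, the only delicate point is the usual justification of passing the \emph{limsup} inside the integral via dominated convergence, carried out just as in Theorem~\ref{asymest}.
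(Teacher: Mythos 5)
Your proposal is correct and follows essentially the same route as the paper: the same three-term decomposition, the same use of Theorem \ref{globaltheorem2}(ii) (via $\|u\|_{\nu,s}<\infty$ with $\nu>\delta$, $\gamma\nu+\beta<1$) to show the $E$-term is $o(t^{-\beta})$ with the exponent $-\gamma(\nu-\delta)$, and the same limsup/dominated-convergence argument yielding $A\le |a|C_1M^{\alpha}A$ and hence $A=0$. No gaps to report.
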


\begin{proof}
First note that

\begin{align*}
t^{\beta}\Vert u(t)-v(t)\Vert_{L^{({\alpha+2},\infty)}} &  \leq t^{\beta}\Vert
U(t)(u_{0}-v_{0})\Vert_{L^{({\alpha+2},\infty)}}\\
&  \quad+|a|t^{\beta}\Vert\int_{0}^{t}U(t-\tau)(|u|^{\alpha}u-|v|^{\alpha
}v)(\tau)d\tau\Vert_{L^{(\alpha+2,\infty)}}\Vert_{L^{(\alpha+2,\infty)}}\\
&  \quad+|b|t^{\beta}\Vert\int_{0}^{t}U(t-\tau)[E(|u|^{\gamma})u](\tau
)d\tau\Vert_{L^{(\alpha+2,\infty)}}%
\end{align*}
Proceeding as in the estimates for $J_{1}$ and $J_{2}$ above, we obtain

\begin{align}
t^{\beta}\Vert u(t)-v(t)\Vert_{L^{({\alpha+2},\infty)}} &  \leq t^{\beta}\Vert
U(t)(u_{0}-v_{0})\Vert_{L^{({\alpha+2},\infty)}}\nonumber\\
&  +|a|CM^{\alpha}t^{\beta}\int_{0}^{t}(t-\tau)^{-\frac{n}{d}(\frac{\alpha
}{\alpha+2})}\tau^{-\beta\alpha-\beta}\tau^{\beta}\Vert u(\tau)-v(\tau
)\Vert_{L^{(\alpha+2,\infty)}}d\tau\nonumber\\
&  +|b|CM^{\gamma}t^{\beta}\int_{0}^{t}(t-\tau)^{-\frac{n}{d}(\frac{\alpha
}{\alpha+2})}\Vert\Lambda^{s}u(\tau))\Vert_{L^{(\alpha+2,\infty)}}^{\gamma
}\Vert u(\tau)\Vert_{L^{(\alpha+2,\infty)}}d\tau\nonumber\\
&  :=A_{1}(t)+A_{2}(t)+A_{3}(t).\label{aux-stab-3}%
\end{align}
From Theorem \ref{globaltheorem2} part (ii), we have that $\Vert u\Vert
_{\nu,s}=$ $\sup_{\tau>0}\tau^{\nu}\Vert\Lambda^{s}u(\tau)\Vert_{L^{(\alpha
+2,\infty)}}<\infty$ and then
\begin{align}
A_{3}(t) &  \leq C|b|t^{\beta}\int_{0}^{t}(t-\tau)^{-\frac{n}{d}(\frac{\alpha
}{\alpha+2})}\tau^{-\gamma\nu-\beta}d\tau\Vert u\Vert_{\nu,s}^{\gamma}\Vert
u\Vert_{\beta}\nonumber\\
&  =t^{{-\frac{n}{d}(\frac{\alpha}{\alpha+2})}+1-\gamma\delta-\gamma
(\nu-\delta)}C|b|\int_{0}^{1}(1-z)^{{-\frac{n}{d}(\frac{\alpha}{\alpha+2}%
)}}\tau^{-\gamma\nu-\beta}d\tau\Vert u\Vert_{\nu,s}^{\gamma}\Vert
u\Vert_{\beta}\nonumber\\
&  =Ct^{-\gamma(\nu-\delta)}\Vert u\Vert_{\nu,s}^{\gamma}\Vert u\Vert_{\beta
}\rightarrow0,\text{ as }t\rightarrow\infty.\label{aux-stab-4}%
\end{align}
Taking $\mathcal{H}=\limsup_{t\rightarrow\infty}t^{\beta}\Vert u(t)-v(t)\Vert
_{L^{(\alpha+2,\infty)}}$, working in the same spirit of the proof of
Theorem \ref{asymest} and using (\ref{aux-stab-3})-(\ref{aux-stab-4}), we
obtain
\[
\mathcal{H}\leq0+|a|C_{1}M^{\alpha}\mathcal{H}+0,
\]
which gives (\ref{stab-100}), because $0\leq\mathcal{H}<\infty$ and
$|a|C_{1}M^{\alpha}<1.$
\end{proof}

We finish by showing the existence of mild solutions of \eqref{NLS} that are asymptotically self-similar at infinite, with respect to the scaling of the single-power Schr\"odinger equation \eqref{singlenls}.

\begin{corollary}
(Asymptotic self-similarity) In addition to the hypotheses of Theorem
\ref{sch-asymp}, assume that $v_{0}$ is a homogeneous distribution of degree
$-d/\alpha$ and that $u_{0}=v_{0}+\omega$ with $\omega$ satisfying (\ref{cond-stab-100}), e.g., $\omega\in L^{(\frac{\alpha+2}{\alpha+1},\infty)}$. Let $v$ be the self-similar solution
of \eqref{eqintdsa} corresponding to the initial value $v_{0}.$ Then, the solution $u$ of \eqref{eqintds} with initial value $u_{0}$ satisfies
\begin{equation}
\lim_{t\rightarrow\infty}t^{\beta}\left\Vert u(t)-v(t)\right\Vert
_{L^{(\alpha+2,\infty)}}=0.\label{aux-con-1000}%
\end{equation}
In other words, one obtains a class of solutions of \eqref{eqintds} that are
attracted in the sense of \eqref{aux-con-1000} to the self-similar solution
$v$ of \eqref{eqintdsa}.
\end{corollary}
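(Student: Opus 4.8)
The plan is to obtain the statement directly from Theorem \ref{sch-asymp}: once we check that the attracting solution $v$ is genuinely self-similar and that the displayed sufficient condition $\omega\in L^{(\frac{\alpha+2}{\alpha+1},\infty)}$ does imply \eqref{cond-stab-100}, nothing else is needed, since every remaining hypothesis of Theorem \ref{sch-asymp} is being assumed. (If $\omega$ is merely assumed to satisfy \eqref{cond-stab-100}, that verification is vacuous.)

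\emph{Step 1: $v$ is self-similar.} Consider the $L$-adapted dilations $v(x,t)\mapsto v_\lambda(x,t):=\lambda^{d/\alpha}v(\lambda x,\lambda^d t)$, $\lambda>0$. Writing out the Fourier representation of $U(t)$ and substituting $\xi\mapsto\lambda\xi$, assumption \textbf{(H1)} ($q$ homogeneous of degree $d$) yields the intertwining identity $U(t)\big(\phi(\lambda\,\cdot)\big)(x)=\big(U(\lambda^{d}t)\phi\big)(\lambda x)$. Inserting this into \eqref{eqintdsa} and changing variables $\sigma=\lambda^{d}s$ in the Duhamel integral shows that $v_\lambda$ solves \eqref{eqintdsa} with initial datum $\lambda^{d/\alpha}v_0(\lambda\,\cdot)$, which equals $v_0$ because $v_0$ is homogeneous of degree $-d/\alpha$. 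The exponent $d/\alpha$ is exactly the one for which the dilation scaling of the weak-$L^{p}$ (quasi)norm, together with $d\beta=\tfrac{d}{\alpha}-\tfrac{n}{\alpha+2}$ (the definition \eqref{betadelta} of $\beta$), gives $\|v_\lambda\|_{\beta}=\|v\|_{\beta}$. Thus $v_\lambda$ belongs to the ball $\{w:\|w\|_{\beta}\le M\}$ in which the mild solution of \eqref{eqintdsa} furnished by Theorem \ref{globaltheorem1} (with $b=0$, in the form requiring only $\|U(t)v_0\|_{\beta}\le\rho$ discussed above) is unique; hence $v_\lambda=v$ for every $\lambda>0$, and taking $\lambda=t^{-1/d}$ yields $v(x,t)=t^{-1/\alpha}V(xt^{-1/d})$ with $V=v(\cdot,1)$, i.e.\ the self-similar form \eqref{self}.

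\emph{Step 2: verification of \eqref{cond-stab-100} for $\omega\in L^{(\frac{\alpha+2}{\alpha+1},\infty)}$.} Apply Lemma \ref{lema2.1} with $p=\tfrac{\alpha+2}{\alpha+1}$ (legitimate, since $p<2$ because $\alpha>0$) and $p'=\alpha+2$:
\[
\|U(t)(u_0-v_0)\|_{L^{(\alpha+2,\infty)}}=\|U(t)\omega\|_{L^{(\alpha+2,\infty)}}\le C\,t^{-\frac{n}{d}\frac{\alpha}{\alpha+2}}\|\omega\|_{L^{(\frac{\alpha+2}{\alpha+1},\infty)}}.
\]
Multiplying by $t^{\beta}$ and using \eqref{alberel}, the exponent becomes $\beta-\tfrac{n\alpha}{d(\alpha+2)}=(\alpha+1)\beta-1<0$, since $(\alpha+1)\beta<1$ is part of the hypotheses of Theorem \ref{globaltheorem1}; hence $t^{\beta}\|U(t)(u_0-v_0)\|_{L^{(\alpha+2,\infty)}}\to 0$ as $t\to\infty$, which is precisely \eqref{cond-stab-100}.

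With Steps 1 and 2 in place, all the hypotheses of Theorem \ref{sch-asymp} hold, so its conclusion \eqref{stab-100} is valid, which is exactly \eqref{aux-con-1000}; and by Step 1 the limiting solution $v$ is self-similar, which completes the assertion. The only step requiring genuine care is Step 1: making rigorous the invariance of \eqref{eqintdsa} under the (possibly anisotropic) degree-$d$ dilation — the commutation of $U(t)$ with dilations and the change of variables in the Duhamel term — and confirming that $v_\lambda$ stays in the $\beta$-ball where uniqueness applies. Everything else is a direct application of Lemma \ref{lema2.1} and a citation of Theorem \ref{sch-asymp}.
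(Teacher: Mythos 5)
Your proof is correct and follows essentially the same route as the paper: verify \eqref{cond-stab-100} for $\omega\in L^{(\frac{\alpha+2}{\alpha+1},\infty)}$ via Lemma \ref{lema2.1} (your exponent $\beta-\frac{n\alpha}{d(\alpha+2)}=(\alpha+1)\beta-1<0$ is exactly the paper's observation $\beta<\frac{n\alpha}{d(\alpha+2)}$, using \eqref{alberel}), and then invoke Theorem \ref{sch-asymp}. The only difference is that where the paper simply cites \cite{CW1, BP, SFR} for the self-similarity of $v$ coming from the homogeneity of $v_0$, you supply that argument in full (the intertwining identity from \textbf{(H1)}, the change of variables in the Duhamel term, the scale invariance of $\|\cdot\|_\beta$ via $d\beta=\frac{d}{\alpha}-\frac{n}{\alpha+2}$, and uniqueness in the $\beta$-ball for the $b=0$ problem), which is correct and makes the corollary self-contained.
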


\begin{proof} For $\omega\in L^{(\frac{\alpha+2}{\alpha+1},\infty)},$ using
Lemma \ref{lema2.1} and noting that $\beta<\frac{n\alpha}{d(\alpha+2)}$, we
have that
\[
0\leq t^{\beta}\left\Vert U(t)\omega\right\Vert _{L^{(\alpha+2,\infty)}}\leq
Ct^{\beta-\frac{n\alpha}{d(\alpha+2)}}\left\Vert \omega\right\Vert
_{L^{(\frac{\alpha+2}{\alpha+1},\infty)}}\rightarrow0,\text{ as }%
t\rightarrow\infty,
\]
and then $\omega=u_{0}-v_{0}$ satisfies (\ref{cond-stab-100}). If this condition is verified, then
(\ref{aux-con-1000}) follows from Theorem \ref{sch-asymp}. Furthermore, notice
that $v$ is self-similar because $v_{0}$ is homogeneous of degree $-d/\alpha$ (see, for instance,
\cite{CW1, BP, SFR}). The proof of the corollary is thus completed.

\end{proof}

\section*{Acknowledgment}
V.B. was partially supported by FCT project PTDC/MAT-PUR/28177/2017, with national funds, and by CMUP (UID/MAT/00144/2019), which is funded by FCT with national (MCTES) and European structural funds through the programs FEDER, under the partnership agreement PT2020. L.C.F.F. was partially supported by CNPq and FAPESP, Brazil. A.P. was partially supported by CNPq grants 402849/2016-7 and 303098/2016-3.


\begin{thebibliography}{article}
\addcontentsline{toc}{chapter}{Bibliography}

 \bibitem[AS]{AS} P. Antonelli, C. Sparber, Existence of solitary waves in dipolar quantum gases, Physica D 240 (2011), 426-431

 \bibitem[BEE]{BEE} C. Babaoglu, A. Eden, S. Erbay, Global existence and nonexistence results for a generalized Davey-Stewartson system, J. Phys. A: Math. Gen. 37 (2004), 11531-11546.

  \bibitem[BaE]{BaE} C. Babaoglu, S. Erbay, Two-dimensional wave packets in an elastic solid with
  couple stresses, Int. J. Nonlinear Mech. \textbf{39} (2004), 941-949.


 \bibitem[B]{B} V. Barros. The Davey Stewartson system in Weak $L^p$ Spaces
  Differential Integral Equations \textbf{25}  (2012), 883-898.
  
   \bibitem[BKS]{BKS} M. Ben-Artzi, H. Koch, J.-C. Saut, Dispersion estimates for fourth order Schrödinger equations,   C. R. Acad. Sci. Paris S\'er. I Math. \textbf{330}  (2000), 87-92.

 \bibitem[BeL]{BeL} J. Bergh, J. Lofstrom, Interpolation Spaces. An introduction, Springer-Verlag, Berlin-Heidelberg-New York, 1976.


 \bibitem[BP]{BP} V. Barros, A. Pastor, Infinite Energy solutions for Schrodinger-type equations with a nonlocal term, Advances in Differential Equations \textbf{18} no. 7-8 (2013), 769-796.



\bibitem[Ca]{Ca} T. Cazenave, Semilinear Schr\"odinger Equations, Courant Lectures Notes in Mathematics, Vol. 10, American Mathematical Society, Providence, 2003.

 \bibitem[CW1]{CW1} T. Cazenave, F.B. Weissler, Asymptotically self-similar global solutions of the nonlinear
 Schr\"odinger and heat equations, Math. Z. \textbf{228} (1998), 83-120.

\bibitem[CW2]{CW2} T. Cazenave, F.B. Weissler, Scattering theory and self-similar solutions for the nonlinear
Schr\"odinger equation, SIAM J. Math. Anal. \textbf{31} (2000), 625-650.






\bibitem[CW3]{CW3} T. Cazenave, F.B. Weissler, More self-similar solutions of the nonlinear
 Schr\"odinger equations, NoDEA Nonlinear Differential Equations Appl. \textbf{5} (1998), 355-365.

\bibitem[CMZ]{CMZ} X. Cheng, C. Miao, L. Zhao, Global well-posedness and scattering for nonlinear Schr\"odinger equations with combined nonlinearities in the radial case, J. Differential Equations \textbf{261} (2016), 1881-1934.



\bibitem[ChW]{ChW} F.M. Christ, M.I. Weinstein, Dispersions of small amplitude solutions of the generalizedKorteweg-de Vries equation, Journal of Functional Aalalysis. \textbf{100} (1991), 87-109.

\bibitem[Ci]{Ci} R. A. Cipolatti, On the existence of standing waves for a Davey-Stewartson system,
Comm. Partial Differential Equations \textbf{17} (1992), 967-988.

\bibitem[CN]{CN} D. Cruz-Uribe, V. Naibo, Kato-Ponce inequalities on weighted and variable Lebesgue spaces, Differential Integral Equations \textbf{29} (9-10) (2016), 801-836.

\bibitem[DS]{DS} A. Davey, K. Stewartson, On three dimensional packets of surface waves, Proc. Roy. London Soc.
A \textbf{338} (1974), 101-110.

\bibitem[Du]{Du} J.M. Dudley, C. Finot, D.J. Richardson, G. Millot, Self-similarity in ultrafast nonlinear optics, Nature Phys. 3 (9) (2007) 597-603.

\bibitem[FV-R]{FV-R} L.C.F. Ferreira, E.J. Villamizar-Roa, Self-similarity and asymptotic stability for coupled nonlinear Schr\"odinger equations in high dimensions. Phys. D 241 (5) (2012), 534-542.

\bibitem[FKTD]{FKTD} M.E. Fermann, V.I. Kruglov, B.C. Thomsen, J.M. Dudley, J.D. Harvey, Self-similar propagation and amplification of parabolic pulses in optical fibers, Phys. Rev. Lett. 84 (2000) 6010-6013.

\bibitem[Fi]{Fi} G. Fibich, The Nonlinear Schr\"odinger Equations, Singular Solutions and Optical Collapse, Applied Mathematical Sciences, Vol. 192, Springer, 2015.

 \bibitem[G]{G} R.H.J. Grimshaw, The modulation of an internal gravity-wave packet and the resonance with the
  mean motion, Stud. Appl. Math. \textbf{56}  (1977), 241-266.

\bibitem[Gr]{Gr} L. Grafakos, Classical Fourier Analysis, Second Edition, Springer-Verlag, New York, 2008.

\bibitem[Gr1]{Gr1} L. Grafakos, Modern Fourier Analysis, Second Edition, Springer-Verlag, New York, 2009.

 \bibitem[GS]{GS} J.M. Ghidaglia, J.C. Saut, On the initial problem for the Davey-Stewartson systems, Nonlinearity
\textbf{3} (1990), 475-506.

  \bibitem[GS1]{GS1} J.M. Ghidaglia, J.C. Saut, Nonelliptic Schr\"odinger equations, J. Nonlinear Sci.
\textbf{3}  (1993), 169-195.


  \bibitem[GSh]{GSh} B.L. Guo, C.X. Shen, Almost conservations law and global rough solutions to a linear
Davey-Stewartson equation, J. Math. Anal. Appl. \textbf{318}  (2006),
365-379.



\bibitem[H1]{H1} N. Hayashi, Local existence in time of small solutions to the Davey-Stewartson system, Annales de
 l'I.H.P.
 Physique Theorique \textbf{65} (1996), 313-366.

\bibitem[H2]{H2} N. Hayashi, Local existence in time of solutions to the elliptic-hyperbolic Davey-Stewartson system
 without smallness condition on the data, J.Analys\'e Math\'ematique \textbf{73} (1997), 133-164.

\bibitem[HH1]{HH1} N. Hayashi, H. Hirata, Global existence and asymptotic behaviour of small solutions to the elliptic-
hyperbolic Davey-Stewartson system, Nonlinearity \textbf{9} (1996),
1387-1409.

\bibitem[HH2]{HH2} N. Hayashi, H. Hirata, Local existence in time of small solutions to the elliptic-hyperbolic
 Davey-Stewartson system in the usual Sobolev space, Proc. Edinburgh Math. Soc. \textbf{40} (1997), 563-581.

\bibitem[HS]{HS} N. Hayashi, J.C. Saut, Global existence of small solutions to the Davey-Stewartson and the Ishimori
 systems,
Differential Integral Equations, \textbf{8} (1995), 1657-1675.

\bibitem[Ka]{Ka} V.L. Karpman, Stabilization of soliton instabilities by high-order dispersion: Fourth order nonlinear Schrödinger-type equations, Phys. Rev. E \textbf{53} (1996), 1336-1339.

\bibitem[KS]{KS} V.L. Karpman, A.G. Shagalov, Stabilitiy of soliton described by nonlinear Schrödinger-type equations with high-order dispersion, Phys. D \textbf{144} (2000), 194-210.

\bibitem[K1]{K1} T. Kato, On nonlinear
Schr\"odinger equations, II, $H^s$ solutions and unconditional well-posedness, J. Anal. Math. \textbf{67} (1995), 281-306.



 \bibitem[Ku]{Ku} E.A. Kuznetsov, Wave collapse in Nonlinear Optics, Topics in Applied Physics 114 (2009), Springer, 175-190.

\bibitem[LMSLP]{LMSLP} T. Lahaye, C. Menotti, L. Santos, M. Lewenstein, T. Pfau,
The physics of dipolar bosonic quantum gases, Rep. Prog. Phys. 72 (2009), 126401.


 \bibitem[LP]{LP} F. Linares, G. Ponce, Introduction to Nonlinear Dispersive
 Equations, Universitext, Springer, New York, 2009.

\bibitem[MXZ-1]{MXZ-1} C. Miao, G. Xu, L. Zhao, The dynamics of the 3D radial NLS with the combined terms, Comm. Math. Phys. 318 (3) (2013), 767-808.

\bibitem[MXZ-2]{MXZ-2} C. Miao, G. Xu, L. Zhao, The dynamics of the NLS with the combined terms in five and higher dimensions, in: Some Topics in Harmonic Analysis and Applications, in: Advanced Lectures in Mathematics, vol.34, Higher Education Press/International Press, Beijing/USA, 2015, pp.265-298.




\bibitem[O]{O} R. O`neil, Convolution Operators and $L^{p,q}$ Spaces, Duke Math. J. \textbf{30} (1963), 129-142.






\bibitem [RY1]{RY1} F. Ribaud, A. Youssfi, Regular and self-similar solutions of nonlinear Schr\"odinger equations,
J. Math Pures Appl. \textbf{77} (1998), 1065-1079.


\bibitem[Sh]{Sh} V.I. Shrira, On the propagation of a three-dimensional packet of weakly nonlinear internal gravity wave,
Int. J. Nonlinear Mech. \textbf{16} (1991), 129-138.


\bibitem[SFR]{SFR} P. Braz-Silva, L.C.F. Ferreira, E.J. Villamizar-Roa, On the existence of infinite energy solutions for nonlinear
 Schr\"odinger equations, Proc. Amer. Math. Soc. \textbf{137} (2009), 1977-1987.


\bibitem[TVZ]{TVZ} T. Tao, M. Visan, X. Zhang, The nonlinear Schr\"odinger equation with
combined power-type nonlinearities, Commun. Partial Differential Equations \textbf{32} (2007), 1281-1343.

\bibitem[Tr]{Tr} H. Triebel, Interpolation Theory, Function Spaces, Differential Operator, North-Holland Publishing Company, Berlin, 1978.

\bibitem[VP]{VP} E.J. Villamizar-Roa, J.E. P\'erez-L\'opez,  On the Davey-Stewartson system with singular initial data,  C. R. Math. Acad. Sci. Paris \textbf{350} (2012), 959--964.



\bibitem[W]{W} F.B. Weissler, Asymptotically self-similar solutions of the two power nonlinear Schr\"odinger equation,
 Adv. Differential Equations \textbf{6} no.4 (2001), 419-440.


\end{thebibliography}
\end{document}